\documentclass[a4paper]{article}
\pdfminorversion=5
\usepackage{amsfonts}
\usepackage{amssymb}
\usepackage{amsmath}
\usepackage{mathtools}
\usepackage{amsthm}
\usepackage[toc,page]{appendix}

\RequirePackage{etex} 

\usepackage{url}
\usepackage{booktabs}
\usepackage{multirow}

\usepackage[ruled,vlined]{algorithm2e}
\usepackage{listings}

\usepackage{graphicx,psfrag,epsfig,color,colortbl,rotate}
\usepackage{xcolor}

\usepackage{linegoal}

\newcommand{\hlyA}[1]{
\hspace*{-2\fboxsep}
\colorbox{black!10}{\parbox{-\fboxsep+\linegoal}{#1}}
}

\newcommand{\hlyB}[1]{
\hspace*{-2\fboxsep}
\colorbox{black!20}{\parbox{-\fboxsep+\linegoal}{#1}}
}

\usepackage[skins]{tcolorbox}

\tcbset{commonstyle/.style={boxrule=0pt,sharp corners,enhanced jigsaw,nobeforeafter,boxsep=0pt,left=\fboxsep,right=\fboxsep}}

\newtcolorbox{mycolorbox}[1][]{commonstyle,#1}

\newlength\myboxwidth

\setlength{\myboxwidth}{\dimexpr\textwidth-2\fboxsep}

\newtheorem{theorem}{Theorem}
\newtheorem{observation}[theorem]{Observation}

\newtheorem{prop}[theorem]{Proposition}
\newtheorem{remark}[theorem]{Remark}

\newtheorem{definition}[theorem]{Definition}
\newtheorem{example}[theorem]{Example}

\usepackage{authblk}
\usepackage{complexity}

\allowdisplaybreaks

\makeatletter
\newcommand{\nosemic}{\renewcommand{\@endalgocfline}{\relax}}
\newcommand{\dosemic}{\renewcommand{\@endalgocfline}{\algocf@endline}}
\let\oldnl\nl
\newcommand{\nonl}{\renewcommand{\nl}{\let\nl\oldnl}}
\makeatother

\usepackage{tikz}
\usepackage{pifont} 
\usetikzlibrary{shapes.geometric,decorations.pathreplacing}
\usetikzlibrary{arrows,automata}
\usetikzlibrary{shapes,positioning}
\tikzstyle{DecisionNodeMin_small} = [circle, black, font=\bfseries, draw=black, align=center, minimum height=.5cm,minimum width=.5cm, inner sep=0pt, text centered]
\tikzstyle{DecisionNodeMax_small} = [rectangle, black, draw=black, align=center, minimum height=.5cm,minimum width=.5cm, inner sep=0pt, text centered]
\tikzstyle{Leaf_small} = [rectangle, black, draw=black, minimum height=.5cm,minimum width=.5cm, align=center, inner sep=0pt, text centered]

\tikzstyle{DecisionNodeMin} = [circle, black, font=\bfseries, draw=black, align=center, minimum height=1.5cm,minimum width=1.5cm, inner sep=0pt, text centered]
\tikzstyle{DecisionNodeMax} = [rectangle,minimum height=1.3cm,minimum width=1.3cm, black, draw=black, align=center, inner sep=0pt, text centered, very thick]
\tikzstyle{Leaf} = [rectangle,minimum height=0.9cm,minimum width=1.5cm, black, draw=black, align=center, inner sep=0pt, text centered, very thick,font=\large]

\tikzstyle{Top} = [rectangle,minimum height=0.7cm,minimum width=0.7cm, black, draw=black, align=center, inner sep=0pt, text centered, thin ]
\tikzstyle{Middle} = [circle,minimum height=.7cm,minimum width=.7cm, inner sep=0pt, black, draw=black, align=center, text centered, thin ]
\tikzstyle{Bottom} = [rectangle,minimum height=0.4cm,minimum width=0.7cm, black, draw=black, align=center, inner sep=0pt, text centered,  thin ]

\tikzset{
  c/.style={every coordinate/.try}
}

\colorlet{greenForTree}{black}

\newcommand{\Alphabeta}{Alpha-Beta\xspace}
\newcommand{\TrueQIP}{QIP\xspace}
\newcommand{\TrueQmIP}{QmIP\xspace}
\newcommand{\TrueQParamIP}{Q(m)IP\xspace}
\newcommand{\QmIP}{QIP\xspace}

\newcommand{\QmIPPlus}{QIP+\xspace}
\newcommand{\USet}{\Legal_\forall}

\newcommand{\Yasol}{\texttt{Yasol}\xspace}
\newcommand{\Domain}{\ensuremath{\mathcal{D}}}
\newcommand{\Legal}{\ensuremath{\mathcal{F}}}

\newcommand{\MCNBasic}{\textbf{MCN}$_{P}$\xspace}
\newcommand{\MCNAugmented}{\textbf{MCN}$_{{DD}}$\xspace}
\newcommand{\MCNBaggio}{\textbf{MCN}$_{{CR}}$\xspace}

\DeclareMathOperator*{\argmax}{arg\,max}

\newcommand{\EG}{\mbox{e.g.}\xspace}
\newcommand{\IE}{\mbox{i.e.}\xspace}

\usepackage{{makecell}}
\setcellgapes{3pt}\makegapedcells

\usepackage{array,collcell}
\newcommand\AddLabel[1]{%
  \refstepcounter{equation}
  (\theequation)
  \label{#1}
}

\newcolumntype{N}{>{\collectcell\AddLabel}r<{\endcollectcell}}
\newcolumntype{M}{>{\hfil$\displaystyle}X<{$\hfil}} 


\usepackage{newfloat}
\DeclareFloatingEnvironment[fileext=loga,listname=List of Games,placement=htp]{game}

\usepackage{float}
\newfloat{Snippet}{htp}{logm}
\usepackage{caption}
\begin{document}

\title{A general model-and-run solver for multistage robust discrete linear optimization}

\author[1]{Michael Hartisch}

\author[2]{Ulf Lorenz}
\affil[1]{Network and Data Science Management, University of Siegen, Germany}
\affil[2]{Technology Management, University of Siegen, Germany}

\date{}

\maketitle

\begin{abstract}
The necessity to deal with uncertain data is a major challenge in decision making. Robust optimization emerged as one of the predominant paradigms to produce solutions that hedge against uncertainty. In order to obtain an even more
realistic description of the underlying problem where the decision maker can react to newly disclosed information, multistage models can be used. However, due to their computational difficulty, multistage problems beyond two stages have received less attention and are often only addressed using approximation rather than optimization schemes. Even less attention is paid to the consideration
of decision-dependent uncertainty in a multistage setting.

We explore multistage robust optimization via quantified linear programs, which are linear programs with ordered variables that are either existentially
or universally quantified. Building upon a (mostly) discrete setting where the uncertain parameters---the universally quantified variables---are only restricted by their bounds, we present an augmented version that allows stating the discrete uncertainty set via a linear constraint system that also can be affected by decision variables. We present a general search-based solution approach and introduce our solver Yasol that is able to deal with multistage robust linear discrete optimization problems, with final mixed-integer recourse actions and a discrete uncertainty set, which even can be decision-dependent. In doing so, we provide a convenient model-and-run approach, that can serve as baseline for computational experiments in the field of multistage robust optimization, providing optimal solutions for problems with an arbitrary number of decision stages.
\end{abstract}



\section{Introduction}
\subsection{Motivation and Main Contribution}
Neither for the complexity class \NP\xspace nor for \PSPACE\xspace  polynomial-time algorithms are known. The formal term \textit{polynomial time} is usually translated to the more intuitive meaning \textit{fast} or \textit{tractable}.
However, 
if researchers were to interpret hardness results as no-hope results and take the resulting \textit{intractability} literally, the beauty of the MIP solver world would not exist. It has become obvious that sufficiently many instances of various \NP-complete problems can be solved by single solvers and have even market relevance.
Thus, technological progress and intensive research has been able to produce remarkable tools,
which is why we should not be deterred from tackling even more (theoretically) hard problems algorithmically. 
The observed phenomenon is neither limited to \NP-complete problems, nor to feasibility problems, but can quite smoothly be extended to \PSPACE-complete optimization problems. From search perspective the gap between 
\NP\xspace and \PSPACE\xspace seems not necessarily larger than the gap between \P\xspace and \NP\xspace. 

To underline the intuitive statement above, we present the solver \Yasol,  which name---Yet another solver---is a little homage to some ``Yet another ...'' unix programs and helpful apps of the 1990s. It is a  tree search based solver for multistage robust linear optimization problems and it operates on an extension of mixed integer linear programs. The instances this solver can deal with are so called \textit{quantified integer linear programs} (abbr. \QmIP) where continuous variables are permitted in the final decision stage, and even with an extension where interrelations between decision variables and the uncertainty set occur (abbr. \QmIPPlus). They  can be introduced from different perspectives: On the one hand, \QmIP are extended mixed integer linear programs (MIP) where the variables are ordered explicitly and each variable is assigned a quantifier ($\exists$ or $\forall$), adding a multistage robust perspective to MIP. On the other hand, \QmIP can be viewed as an extension of quantified boolean formula (QBF), with  additional objective function and further allowing general integer (and some continuous) variables and arbitrary linear constraints, rather than binary variables and clauses, adding the optimization perspective to a generalized QBF. Additionally, \QmIP can be viewed as a mathematical game between an existential and a universal player that---while having to obey a set of (linear) rules---try to maximize and minimize the same objective function, respectively. In the context of robust optimization \QmIP and its extensions provide a convenient framework for multistage robust optimization problems with discrete polyhedral and even decision-dependent uncertainty set.

Tackling \QmIP and \QmIPPlus instances algorithmically has thrived on the pioneering work of the MIP, QBF and game tree search community: Optimality proofs and several ideas for finding solutions can be adapted from MIP and QBF solving, while relying on heuristic and algorithmic game tree search concepts. In this paper we describe how we can adapt the theoretical, heuristic and algorithmic findings and approaches from these various research fields and we present \QmIP and \QmIPPlus specific results, which are then merged together to obtain an easily accessible model-and-run solver for a large variety of multistage robust optimization problems. Along this path, the contributions of this paper are as follows:
\begin{itemize}
\item We provide an in depth introduction and analysis of \QmIP and focus on the augmented version \QmIPPlus that allows modeling of multistage robust optimization problems with decision-dependent uncertainty.
\item We introduce our solver Yasol that is able to deal with multistage robust linear discrete  optimization problems, with mixed-integer recourse actions only in the final decision stage and discrete uncertainty, which even can be decision-dependent. 
\item We present a general search based approach for 
such problems, which stands in contrast to the commonly used reformulation, decomposition and approximation techniques used in multistage optimization.
\item We discuss enhanced solution techniques and give insight into specialized heuristics.
\item Using the example of the multilevel critical node problem  \cite{baggio2021multilevel} we showcase that quantified programming provides a convenient and straightforward modeling framework and we demonstrate that our model-and-run solver serves as suitable baseline for multistage robust optimization problems.

\end{itemize}

After considering the related literature we introduce the concept of quantified programming and present an augmented version allowing decision-dependent uncertainty in Section \ref{Sec::ProblemStatement}. In Section \ref{Sec::Solver} we introduce our solver and explain the underlying solution process. Further techniques integrated into the solution process are discussed in Section \ref{Sec::Enhancements}. Before we conclude, we show how quantified programs can be used to model the multilevel critical node problem  \cite{baggio2021multilevel} in Section \ref{Sec::MCN} and demonstrate the performance of our solver on these instances in Section \ref{Sec::Experiments}.

\subsection{Related Work}
\paragraph{Quantified Programming}
Quantified constraint satisfaction problems have been studied since at least 1995 \cite{gerber1995parametric}. In 2003, Subramani revived the idea of universal variables in constraint satisfaction problems and coined the term Quantified Linear Program (QLP) \cite{subramani2003analysis}. His QLP did not have an objective function and the universal variables could only take values in their associated intervals. In the following year he extended this approach by integer variables and called them Quantified Integer Programs (QIPs) \cite{subramani2004analyzing}. Later Wolf and Lorenz added a linear objective function \cite{Euro15} and enhanced the problem to: ``Does a solution exist and if yes which one is the best.'' 
In \cite{hartisch2016quantified} the extension of \TrueQIP allowing a second constraint system restraining the universally quantified variables to a polytope was introduced. This was further extended to allow decision-dependent uncertainty sets in \cite{hartisch2019mastering}. In both papers, polynomial time reductions are presented, linking the extensions closely to the originating \TrueQIP. 
In a computational study the strength of using \TrueQIP for robust discrete optimization problems was illustrated \cite{goerigk2021multistage}, being able to optimally solve problems with up to nine stages.

As we deal with a (mainly) discrete domain, there is a tight connection to the quantified boolean formula problem (QBF), which can be interpreted as a multistage robust satisfiability problem with only binary variables which are restricted via logical clauses. In particular as our solver uses an internal binary representation of discrete variables, many techniques known from solving QBF can be adapted to help during the search process. Within the last few years several new techniques for solving QBF were developed and enhanced, such as clause selection \cite{janota2015solving}, clause elimination  \cite{heule2015clause}, quantified blocked clause elimination \cite{lonsing2015enhancing}, counter example guide abstraction refinement \cite{JANOTA20161}, dependency learning \cite{peitl2019dependency}, long-distance $Q$-resolution \cite{peitl2019long} as well as several preprocessing \cite{wimmer2017hqspre,lonsing2019qratpre} and even machine learning techniques  \cite{janota2018towards,lederman2019learning}. 
With the recent surge of new research a number of general solvers evolved and emerged \cite{rabe2015caqe,lonsing2017depqbf,tentrup2019caqe} within a vivid competitive environment \cite{pulina20192016,lonsing2016qbf}. It is noteworthy that also research on specific optimization settings within the QBF framework was conducted \cite{ignatiev2016quantified}.

\paragraph{Multistage Robust Optimization}
The notion of quantified variables, asking for a strategy that hedges against all possible realizations of universally quantified variables is closely linked to robust optimization problems. In \cite{goerigk2021multistage} it was shown how quantified programming is directly linked to multistage robust optimization. Robust optimization problems are mathematical optimization problems with uncertain data, where a solution is sought that is immune to all realizations of the uncertain data within the anticipated \textit{uncertainty set} \cite{ben2002robust,gabrel2014recent}. A compact overview of prevailing uncertainty sets and robustness concepts can be found in \cite{gorissen2015practical,goerigk2016algorithm}. 
By allowing wait-and-see variables, instead of demanding a single static solution, several robust two- and multi-stage approaches arose in recent years \cite{ben2004adjustable,Liebchen,delage2015robust,yanikouglu2019survey}, whereat the vast majority of research is restricted to a two-stage setting.

Besides directly tackling the robust counterpart, which is also  referred to as deterministic equivalent program (DEP) \cite{wets1974stochastic} or full expansion \cite{janota2015expansion} in related areas, several other solution approaches for multistage problems exist. Dynamic programming techniques can be used \cite{shapiro2011dynamic}, but often suffer from the curse of dimensionality. Other solution methods include variations of Bender's decomposition \cite{thiele2009robust} and Fourier–Motzkin elimination \cite{zhen2018adjustable}. Additionally, iterative splitting of the uncertainty set is used to solve robust multistage problems in  \cite{postek2016multistage} and a partition-and-bound algorithm is presented in \cite{bertsimas2016multistage}. By considering specific robust counterparts a solution can be approximated and sometimes even guaranteed \cite{ben2004adjustable,aharon2009robust,chen2009uncertain}.  Furthermore, several approximation schemes based on (affine) decision rules can be found in the literature (\EG \cite{bertsimas2010optimality,kuhn2011primal,bertsimas2015design,georghiou2019decision}). 

In robust optimization it is frequently assumed that the occurring uncertainty is embedded in a predetermined uncertainty set, \IE it is assumed to be exogenous. We use the term \textit{decision-dependent} uncertainty for problems in which realizations of uncertain variables can be manipulated by decisions made by the planner. Others use the term endogenous uncertainty (\EG \cite{lappas2018robust,zhang2020unified}) or terms like variable uncertainty (\EG \cite{POSS2014836}) or {adjustable uncertainty set} \cite{zhang2017robust}. We refer to \cite{DissMichael} for a more exhaustive discussion.


In the general context of optimization under uncertainty several solvers, modeling tools and software packages for robust optimization \cite{goh2011robust,dunning2016advances,wiebe2021romodel} even with endogenous uncertainty \cite{vayanos2020roc++}  as well as stochastic and distributionally robust optimization  \cite{chen2020robust} have been developed. Furthermore, for the special case of combinatorial three-stage  fortification games a general exact solution framework was presented in  \cite{leitner2021exact}. The  other mentioned solvers for robust optimization problems that can also cope with multistage settings, however, rely on approximation schemes---in particular decision rules---for adjustable variables and therefore cannot---in general---guarantee the optimality of the proposed solution. To the best of our knowledge, there exists no general solver that is able to solve problems within a multistage robust discrete linear optimization setting---let alone a setting with decision-dependent uncertainty---to optimality.

\section{Problem Definition\label{Sec::ProblemStatement}}

This section deals with the mathematical object of interest. 
First we present the so called quantified linear program with minimax objective \cite{Euro15} and with integer variables, which builds the bridge to well known mathematical programming. This is structured in so called stages and is called \QmIP, where we additionally allow continuous variables in the last decision stage. 
The second problem statement, which we call \QmIPPlus, generalizes and extends the \QmIP problem, allowing {\em decision-dependent} and polyhedral uncertainty in the multistage discrete setting. 

Throughout the paper we use the notation $[n]=\{1,\ldots,n\}$ to denote index sets.  Furthermore, vectors are always written in bold font and the transpose sign for the scalar product between vectors is dropped for ease of notation.

\subsection{The problem \QmIP}
The main component of a  \QmIP is a linear constraint system\footnote{The meaning of the superscript $\exists$ quantifier will become clear later. For the moment, it is only a label.
} $A^\exists \pmb{x} \leq \pmb{b}^\exists$. Each variable is either existentially or universally quantified.
An example may be

$$
\exists x_1\in \{0,1\}\ \forall x_2\in \{0,1\}\ \exists x_3 \in [-2,2]:
\begin{pmatrix}
-10 & -4 & 2\\
-10 & 4 & -2\\
10 & 4 & 1\\
10 & -4 & -1
\end{pmatrix}
\cdot
\begin{pmatrix}
x_1 \\
x_2 \\
x_3
\end{pmatrix}
\leq
\begin{pmatrix}
0 \\
4 \\
12 \\
8
\end{pmatrix}
$$
 and the question to this instance is: "Is there an $x_1$ such that for all $x_2$ there is an $x_3$ such that all constraints hold?". Of course, the variables must have well-defined domains, and there may be sequences of variables of the same kind. Such a sequence of variables of one kind, either existential or universal variables, belong to the same so called {\em variable block}.
{\em A major restriction in this paper is that continuous variables may only occur in a final existential variable block. All other variables have to be integer with finite upper and lower bounds.} Different than in conventional MIP, the quantifier alternations enforce a variable order that has to be considered. 
 
 Moreover, it is possible to incorporate an objective function. However, this objective has a minimax character, i.e. if an optimizer strives at large objective values, he has to consider a universal 'player' minimizing over her domain. For our example it may look as follows:
 $$\max_{x_1\in\{0,1\}} \left(x_1 + \min_{x_2\in\{0,1\}} \left(+x_2 + \max_{x_3\in[-2,2]} -x_3\right)\right)$$
 
 Following the example above, a solution is no longer a vector of length three, but it is a tree of depth three that describes how to act and react with $x_1$ and $x_3$, depending on possible assignments of $x_2$. Such a tree-like solution is called a {\em strategy}. One possible solution strategy  with minimax value $1$ is $x_1=0$, then---depending on the opponent's move---either $x_2=0$ and $x_3=-2$ resulting in objective value $2$, or $x_2=1$ and $x_3=0$ with objective value $1$, as depicted in strategy 1 in Figure \ref{Fig::FirstSolution}. The  minimax-value $1$ of this strategy is defined via the objective value arising in the worst-case realization of the universally quantified variables, in this case $x_2=1$.  Hence, not all strategies have the same minimax-value, and thus there is an optimization problem. E.g. the strategy 2 in Figure \ref{Fig::FirstSolution}, which sets $x_1=1$ and then either $x_2=0$ and $x_3=2$, or $x_2=1$ and $x_3=-2$ has a worst-case objective value of $-1$ and therefore constitutes a worse strategy.
 
 \begin{figure}[ht!]
 \centering
\begin{tikzpicture}[-,>=stealth', line width = 2pt,level 1/.style={sibling distance=4cm},
level 2/.style={sibling distance=2cm}, 
level 3/.style={sibling distance=1cm},
level distance=1.2cm] 
\node[Top] (Top) {$1$}
child{ 
    node [Middle, xshift=-.7cm] (A1_1){$1$}
	    child{ node [Top]{$2$}
	    	child{ node [Bottom,xshift=-.5cm]{$2$}
	    	edge from parent node[left]{$x_3=-2$};
	    	}
	    edge from parent node[left]{$x_2=0$};
	 	}
	 	child{ node [Top]{$1$}
	 		child{ node [Bottom]{$1$}
	    	edge from parent node[left]{$x_3=0$}
	 		}
	 	edge from parent node[right]{$x_2=1$}
	 	}
	 	edge from parent[thin] node[left]{$x_1=0$};
};
\node[above of = Top, xshift=-1cm,yshift=-.2cm]{strategy 1};
\end{tikzpicture}
\hspace{1cm}
\begin{tikzpicture}[-,>=stealth', line width = 2pt,level 1/.style={sibling distance=4cm},
level 2/.style={sibling distance=2cm}, 
level 3/.style={sibling distance=1cm},
level distance=1.2cm] 

\node[Top] (Top) {$-1$}
child{ 
    node [Middle, xshift=.7cm] (A1_1){$-1$}
	    child{ node [Top]{$-1$}
	    	child{ node [Bottom,xshift=.5cm]{$-1$}
	    	edge from parent node[left]{$x_3=2$};
	    	}
	    edge from parent node[left]{$x_2=0$};
	 	}
	 	child{ node [Top]{$4$}
	 		child{ node [Bottom,xshift=-.5cm]{$4$}
	    	edge from parent node[right]{$x_3=0$};
	 		}
	 	edge from parent node[right]{$x_2=1$};
	 	}
	 	edge from parent[thin] node[right]{$x_1=1$};
};
\node[above of = Top, xshift=1cm,yshift=-.2cm]{strategy 2};
\end{tikzpicture}

\caption{Two strategies with minimax values at each node.\label{Fig::FirstSolution}}
 \end{figure}
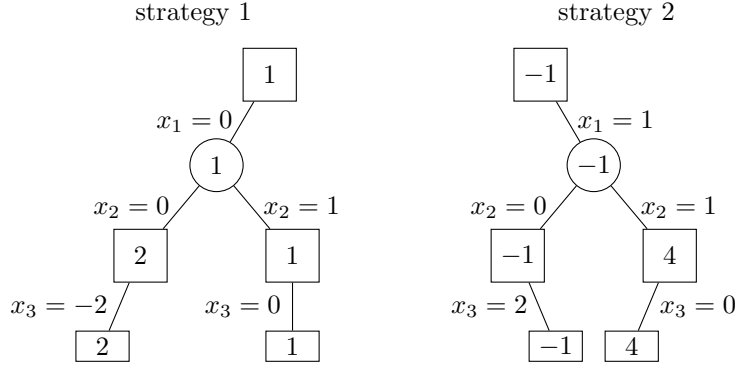
 
 \begin{remark}
 When depicting game trees or strategies we use square existential (MAX) nodes, circular universal (MIN) nodes, and rectangular leafs.
 \end{remark}
 
 {\bf More formally}, let $\pmb{x}=(x_1,\ldots,x_n)^\top \in \mathbb{R}^n$ be a vector of $n$ (ordered) variables and $\pmb{Q} \in \{\exists,\forall\}^n$ 
 a quantification vector, assigning a quantifier to each variable. 	
Every maximal consecutive subsequence in $\pmb{Q}$, consisting of identical quantifiers, is called a \textit{quantifier block}, where the quantifier of the $t$-th block is given by $Q^{(t)}\in \{\exists,\forall\}$. Let $T$ be the number of such blocks with $n_t\in \mathbb{N}$ being the number of variables in block $t$. The variable vector of block $t$ is referred to as \textit{variable block} $\pmb{x}^{(t)}$ and the domain for each inner variable block $t < T$ is given by the finite set $\Domain^{(t)}=\{\pmb{x}^{(t)} \in \mathbb{Z}^{n_t} \mid\forall j\in [n_t]: \ell^{(t)}_j \leq x^{(t)}_j \leq u^{(t)}_j\}$ for $\pmb{\ell}^{(t)},\pmb{u}^{(t)} \in \mathbb{Z}^{n_t}$. In the final variable block also continuous variables are allowed, i.e. the corresponding domain is given by $\Domain^{(T)}=\{\pmb{x}^{(T)} \in \mathbb{Z}^{n_T-\gamma}\times \mathbb{R}^{\gamma} \mid\forall j\in [s_T]: \ell^{(T)}_j \leq x^{(T)}_j \leq u^{(T)}_j\}$, with $\pmb{\ell}^{(T)},\pmb{u}^{(T)} \in \mathbb{R}^{n_T}$ and $\gamma \in [n_T]$ being the number of continuous variables. In case of a final universal variable block, \IE $Q^{(T)}=\forall$, however, we demand $\gamma=0$.
The domain of the entire variable vector is given by $\Domain = \{\pmb{x} \in \mathbb{R}^n \mid \forall t \in [T]: \pmb{x}^{(t)} \in \Domain^{(t)}\}$.
\begin{definition}[Quantified Integer Linear Program (\QmIP)] \label{Def_QIP}~\\
Let $A^\exists \in \mathbb{Q}^{m_\exists \times n}$ and $\pmb{b}^\exists \in \mathbb{Q}^{m_\exists}$ for $m_\exists \in \mathbb{N}$. Let $\Domain$ and $\pmb{Q}$ be given with  $Q^{(1)}=Q^{(T)}=\exists$. Let $\pmb{c} \in \mathbb{Q}^n$ be the vector of objective coefficients, for which $\pmb{c}^{(t)}$ denotes the vector of coefficients belonging to the $t$-th variable block. The term $\pmb{Q} \circ \pmb{x} \in \Domain$ with the component-wise binding operator $\circ$ denotes the \emph{quantification sequence} $Q^{(1)}\pmb{x}^{(1)} \in \Domain^{(1)}\ \ldots\ Q^{(T)} \pmb{x}^{(T)} \in\Domain^{(T)}$, such that every quantifier $Q^{(t)}$ binds the variables  $\pmb{x}^{(t)}$ of block $t$  ranging in their domain $\Domain^{(t)}$. We call 

\[
\resizebox{\textwidth}{!}{
$
	{\max\limits_{\pmb{x}^{(1)} \in \Domain^{(1)}}\left( \pmb{c}^{(1)}\pmb{x}^{(1)}+ \min\limits_{\pmb{x}^{(2)} \in \Domain^{(2)}} \left( \pmb{c}^{(2)}\pmb{x}^{(2)} + \max\limits_{\pmb{x}^{(3)} \in \Domain^{(3)}} \left( \pmb{c}^{(3)}\pmb{x}^{(3)} + \ldots \max\limits_{\pmb{x}^{(T)} \in \Domain^{(T)}} \pmb{c}^{(T)} \pmb{x}^{(T)}\right)\right)\right)} 
	$
		}\] 

$$
\textnormal{s.t.}\ \pmb{Q} \circ \pmb{x} \in \Domain:\ A^\exists \pmb{x} \leq \pmb{b}^\exists
$$
	a \emph{quantified integer linear program} (\QmIP) with objective function (and final mixed-integer recourse).
\end{definition}

\begin{remark}
By allowing continuous variables in the final (existential) variable block we are in fact able to solve very specific quantified \emph{mixed} integer programs.\footnote{To highlight this important nuance we thought about renaming the constantly used acronym \QmIP to \TrueQmIP or \TrueQParamIP but decided against it for reasons of consistency with respect to former papers and to avoid excessive notation.}
\end{remark} 

\subsection{Quantified Program with Interdependent Domains}\label{QIPplus}

Next, we present the \QmIP with interdependent domains (\QmIPPlus) \cite{hartisch2019mastering,DissMichael}. This new mathematical object allows us to restrict the universal variables to a polytope or even allow this polytope to be changed by assignments of existentially quantified variables, making it suitable to model multistage robust optimization problems with polyhedral and even decision-dependent uncertainty.

In order to introduce this problem we first describe the \QmIP 
as a mathematical two-person zero-sum game 
and utilize this perspective to motivate the proposed extension. 
Imagine a game between a decision maker 
and an opponent. 
A play is represented by a variable vector $\pmb{x} \in \Domain \subseteq \mathbb{Z}^n $, in which the variables are explicitly ordered and each variable belongs to either of the players, \IE~a vector of quantifiers $\pmb{Q}\in\{\exists,\forall\}^n$ binds each variable. We refer to the decision maker as the \textit{existential player}, as he will be in control of the existentially quantified variables and we are interested in the \textit{existence} of a meaningful strategy.
The opponent is called the \textit{universal player}, as she controls the universally quantified variables and as we are not able to control her moves  we have to anticipate \textit{all possible} moves the opponent might be able to choose.
Due to the ordering of the variables, variable blocks arise, which are given by maximal consecutive subsequences in $\pmb{Q}$ consisting of identical quantifiers. We say a player makes \textit{move} $\pmb{x}^{(t)}=\pmb{y}$, if she fixes the variable vector $\pmb{x}^{(t)}$ of block $t$. In context of a game, the constraint system $A^\exists \pmb{x}\leq \pmb{b}^\exists$, together with the variable domains describe the game rules for the existential player.

\begin{definition}[Game Tree]\label{Def_Tree}~\\
The \emph{game tree} $G=(V,E,e)$ of a \QmIP  is an edge-labeled finite, rooted tree with a set of nodes $V=V_\exists\, \cup\, V_\forall\, \cup V_L$, unique root node $r \in V_\exists$, a set of edges $E$ and a vector of edge labels $e \in \mathbb{Q}^{\vert E \vert}$. The disjoint sets $V_\exists$, $ V_\forall $ and $V_L$ contain  \emph{existential decision nodes}, \emph{universal decision nodes} and \emph{leaf nodes}, respectively. The \emph{level} of a node $v\in V$ is the number of edges in the path from $r$ to $v$. Inner (non-leaf) nodes with the same level are either all existential decision nodes or all  universal decision nodes. The $j$-th variable is represented by inner nodes with level $j-1$. 
Therefore, outgoing edges from a node $v \in V$ in level $j-1$ represent moves from $\Domain_{j}$ and the edge labels encode the corresponding variable assignments. 
\end{definition}

\begin{remark}
Allowing continuous variables in the final existential variable block seems to contradict the use of a \textit{finite} tree. However, as such variables are placed in the final block, we regard them as part of the leaf evaluation, and not as true decision variables: after all discrete variables have been assigned, a leaf is reached and the value of the leaf is the result of the remaining linear program.
\end{remark}

In contrast to the standard \QmIP, where the universal player only has to adhere to the universal variable's domains, now, each player gets an own rule book, given by two constraint systems $A^\exists \pmb{x} \leq \pmb{b}^\exists$ and $A^\forall \pmb{x} \leq \pmb{b}^\forall$. The first and foremost objective of each player is to prevent a violation of her respective constraint system, i.e.~if a player is no longer able to ensure the satisfiability of her constraint system this player \textit{loses} as there are no legal moves left. In particular, we call a move $\pmb{x}^{(t)}=\pmb{y}$ \textit{legal}, if after setting $\pmb{x}^{(i)}$ to $\pmb{y}$  at least one solution of the current player's constraint system exists, with respect to the given variable domains. If a play is completed and both constraint systems are fulfilled the payoff $\pmb{c} \pmb{x}$ is made by the universal player to the existential player. Therefore, the existential player aims at maximizing this payoff, while the universal player tries to minimize it. Overall this game can have three different endings: a) the existential player has no legal move left and the payoff is $-\infty$, b) the universal player has no legal move left and the existential player receives $\infty$, and c) the play ends with both constraint systems being fulfilled and $\pmb{c}^\top \pmb{x}$ is payed to the existential player. 
Note that a stalemate is impossible in this setting: As only legal moves are allowed, in every game position reached by legal moves, at least the constraint system of the previous player still has a solution. Only  the degenerate case where both constraint systems do not have any solution from the start has to be excluded.



Let us now formally describe this problem. As before, a vector of $n$ variables $\pmb{x}=(x_1,\ldots,x_n)^\top \in \mathbb{Q}^n$ and a quantification vector $\pmb{Q} \in \{\exists,\forall\}^n$ assigning a quantifier to each variable, are the starting point. Let all terms and designations that relate to variables be the same as before. For now we prohibit the appearance of any continuous variables (cf. Section \ref{Sec::Continuous}), in particular, $\Domain^T \subseteq \mathbb{Z}^n_t$. 

\begin{definition}[\QmIP with Interdependent Domains (\QmIPPlus)\label{Def::QIP+}]
Let $m_\exists$, $m_\forall \in \mathbb{N}$ and let $A^\exists \in \mathbb{Q}^{m_\exists \times n}$, $\pmb{b}^\exists \in \mathbb{Q}^{m_\exists}$ and $A^\forall \in \mathbb{Q}^{m_\forall \times n}$,  $\pmb{b}^\forall \in \mathbb{Q}^{m_\forall}$ be the left-hand side matrices and right-hand side vectors of the existential and universal constraint system, respectively. Let $\Domain$ and $\pmb{Q}$ be given with  $Q^{(1)}=\exists$ and $Q^{(T)}=\forall$. We demand $\{\pmb{x} \in \Domain \mid A^\forall \pmb{x} \leq \pmb{b}^\forall\} \neq \emptyset$.
The set of \emph{legal assignments of variable block $t$} (dependent on the assignment of previous variable blocks) $\Legal^{(t)}(\tilde{\pmb{x}}^{(1)},\ldots,\tilde{\pmb{x}}^{(t-1)})$ is  given by
\[ 
\resizebox{\linewidth}{!}{$
\Legal^{(t)}
=
\left\lbrace
\hat{\pmb{x}}^{(t)}\in \Domain^{(t)} \mid 
 \exists \pmb{x}=(\tilde{\pmb{x}}^{(1)},\ldots,\tilde{\pmb{x}}^{(t-1)},\hat{\pmb{x}}^{(t)},\pmb{x}^{(t+1)},\ldots,\pmb{x}^{(T)}) \in \Domain : \, A^{Q^{(t)}}\pmb{x} \leq \pmb{b}^{Q^{(t)}}  \right\rbrace
$} 
\]
\IE after assigning the variables of block $t$ there still must exist an assignment of $\pmb{x}$ such that the system of the player in turn $Q^{(t)} \in \{\exists, \forall\}$ is fulfilled.\footnote{The dependence on the assignment of previous variable blocks $\tilde{\pmb{x}}^{(1)},\ldots,\tilde{\pmb{x}}^{(i-1)}$ is omitted when clear.}

The vector of objective coefficients is given by $\pmb{c} \in \mathbb{Q}^n$, where $\pmb{c}^{(t)}$ denotes the vector of coefficients belonging to variable block $t$. We call 

\[
\resizebox{\linewidth}{!}{$
\max\limits_{\pmb{x}^{(1)} \in \Legal^{(1)}}\left( \pmb{c}^{(1)}\pmb{x}^{(1)}+ \min\limits_{\pmb{x}^{(2)} \in \Legal^{(2)}} \left( \pmb{c}^{(2)}\pmb{x}^{(2)} + \max\limits_{\pmb{x}^{(3)} \in \Legal^{(3)}} \left( \pmb{c}^{(3)}\pmb{x}^{(3)} + \ldots \min\limits_{\pmb{x}^{(T)} \in \Legal^{(T)}} \pmb{c}^{(T)} \pmb{x}^{(T)}\right)\right)\right)
$} 
\]

\begin{equation}\label{Equation::ConstraintQIPID}
\textnormal{s.t.}\ \exists \pmb{x}^{(1)} \in \Legal^{(1)}\ \forall \pmb{x}^{(2)} \in \Legal^{(2)} \ \exists  \pmb{x}^{(3)} \in \Legal^{(3)} \ldots\ \forall \pmb{x}^{(T)} \in \Legal^{(T)}: \ A^\exists \pmb{x} \leq \pmb{b}^\exists
\end{equation}
\noindent a  \emph{quantified integer linear program with interdependent domains} (\QmIPPlus).
\end{definition}

\begin{observation}
Checking whether a universal variable assignment $\hat{\pmb{x}}^{(t)}$ is legal, i.e. an element of $\mathcal{F}^{(t)}$, can be done by checking the feasibility of the IP $A^\forall \pmb{x} \leq \pmb{b}^\forall$ arising from fixing the variables of previous blocks accordingly, fixing $\pmb{x}^{(t)}=\hat{\pmb{x}}^{(t)}$ and demanding compliance with the general domain $\Domain$. For existential variable assignments the same process can be carried out for $A^\exists \pmb{x} \leq \pmb{b}^\exists$. We call this aspect the ``no-suicide-rule'', as neither player is allowed to actively perform a move that results in an irreversible violation of her constraint system. 
\end{observation}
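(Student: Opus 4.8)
The plan is to show that the set membership $\hat{\pmb{x}}^{(t)} \in \Legal^{(t)}$ is, by definition, nothing but the truth value of a single existential statement, and that this statement coincides exactly with the feasibility of the integer program described. Concretely, I would start from the definition of $\Legal^{(t)}(\tilde{\pmb{x}}^{(1)},\ldots,\tilde{\pmb{x}}^{(t-1)})$ and read off that $\hat{\pmb{x}}^{(t)}$ is legal if and only if there exists a completion $(\pmb{x}^{(t+1)},\ldots,\pmb{x}^{(T)})$ such that the full vector $\pmb{x}=(\tilde{\pmb{x}}^{(1)},\ldots,\tilde{\pmb{x}}^{(t-1)},\hat{\pmb{x}}^{(t)},\pmb{x}^{(t+1)},\ldots,\pmb{x}^{(T)})$ lies in $\Domain$ and satisfies $A^{Q^{(t)}}\pmb{x} \leq \pmb{b}^{Q^{(t)}}$.

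Next I would observe that fixing the first $t$ variable blocks to the prescribed values turns this existential statement into a parametric feasibility question over the free blocks $t+1,\ldots,T$ only: the columns of $A^{Q^{(t)}}$ associated with the fixed variables contribute a constant vector that can be moved to the right-hand side $\pmb{b}^{Q^{(t)}}$, leaving a linear system in the remaining (integer, bounded) variables, together with the box constraints inherited from $\Domain$. Since in the \QmIPPlus setting all variables are integer (we have $\Domain^{(T)} \subseteq \mathbb{Z}^{n_T}$), the condition $\pmb{x} \in \Domain$ is exactly a conjunction of integrality and bound constraints, so the feasibility question is precisely the feasibility of an integer program. Hence $\hat{\pmb{x}}^{(t)} \in \Legal^{(t)}$ holds if and only if that IP is feasible, which is the claim for the player in turn $Q^{(t)}$.

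The only point that requires a word of care is the explicit domain condition $\hat{\pmb{x}}^{(t)} \in \Domain^{(t)}$ appearing in the definition of $\Legal^{(t)}$: I would note that demanding compliance of the whole vector with $\Domain$ already enforces $\pmb{x}^{(t)} \in \Domain^{(t)}$, because $\Domain = \{\pmb{x} \mid \forall t\colon \pmb{x}^{(t)} \in \Domain^{(t)}\}$; thus if $\hat{\pmb{x}}^{(t)} \notin \Domain^{(t)}$ the constructed IP is trivially infeasible, correctly matching non-membership. Finally, the existential case is entirely symmetric, obtained by reading the above with $Q^{(t)}=\exists$ and the pair $(A^\exists,\pmb{b}^\exists)$ in place of $(A^\forall,\pmb{b}^\forall)$, so the same argument applies verbatim. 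I do not expect a genuine obstacle here: the statement is essentially a restatement of the definition, and the only substance is the observation that the inner existential quantifier over the variable domains is an integer-program feasibility test once all earlier blocks and the current block are fixed.
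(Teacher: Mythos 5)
Your argument is correct and matches the paper's (implicit) reasoning: the paper states this as an unproved Observation precisely because, as you show, legality of $\hat{\pmb{x}}^{(t)}$ is by Definition~\ref{Def::QIP+} an existential statement over the remaining bounded integer blocks, which after fixing the earlier blocks and block $t$ is exactly an IP feasibility check for the system of the player in turn. Your side remarks---that $\Domain^{(t)}$-membership is subsumed by demanding $\pmb{x}\in\Domain$, and that the existential case is symmetric---are both accurate and consistent with the paper's setup.
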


\begin{definition}[(Winning) Existential Strategy]
A strategy (for the assignment of existential variables) $S=(V',E',e')$ is a subtree of a game tree G = (V,E, e). $V'$ contains the unique root node $r$, each node $v_\exists\in V'\cap V_\exists$ has exactly
one child in $S$ unless the set of legal moves at this node is empty. Each node $v_\forall \in V'\cap V_\forall$ has as many children in $S$ as there are legal moves at this node. A strategy is called a winning strategy for the existential player, if $A^\exists \pmb{x}_v \leq \pmb{b}^\exists$ at each leaf $v$ representing a full game $\pmb{x}_v \in \Domain$ and if all nodes in $V'$ without outgoing edges are universal nodes. 
\end{definition}

Similar to a standard \QmIP, the solution of a \QmIPPlus is a strategy in the game-tree search sense. The difference to the solution of a \QmIP is that a path in a \QmIPPlus strategy not necessarily needs to reach a leaf of the game tree: If no legal variable assignment remains for the existential or universal player, a \textit{terminal node} is reached, with extended minimax value $-\infty$ or $+\infty$, respectively. If there is more than one solution, the objective function aims for a certain (the ``best'') one, whereat the value of a strategy is defined via the worst-case payoff at its terminal nodes (see Stockman's Theorem \cite{Pijls}).
The play $\tilde{\pmb{x}}$ resulting in this leaf is called the \textit{principal variation} (PV) \cite{Minimax}, which is the sequence of variable assignments being chosen during optimal play by both players.
\begin{definition}[Extended Minimax Value]\label{Def::ExtendMinMax}~\\
Given the game tree $G=(V,E)$ of a \QmIPPlus. For any leaf $\ell \in V_L$ and the variable assignment $\pmb{x}_\ell$ associated with this leaf the weighting function
$$
w(\ell)=
\begin{cases} 
c^\top x_v & \text{, $A^\exists \pmb{x}_\ell \leq \pmb{b}^\exists$ and  $A^\forall \pmb{x}_\ell \leq \pmb{b}^\forall$ }\\
-\infty & \text{, $A^\exists \pmb{x}_\ell \not \leq \pmb{b}^\exists$ and  $A^\forall \pmb{x}_\ell  \leq \pmb{b}^\forall$ }\\
+\infty & \text{,  $A^\exists \pmb{x}_\ell  \leq \pmb{b}^\exists$ and  $A^\forall \pmb{x}_\ell  \not\leq \pmb{b}^\forall$ }\\
\pm\infty & \text{,  $A^\exists \pmb{x}_\ell  \not\leq \pmb{b}^\exists$ and  $A^\forall \pmb{x}_\ell  \not\leq \pmb{b}^\forall$}
\end{cases}$$
determines the objective value of play $\pmb{x}_\ell$, with $\pm \infty$ symbolizing an unknown outcome. For any node $v \in V$ the \emph{extended minimax value} is defined recursively by
$$
\resizebox{\linewidth}{!}{$
minimax_e(v)=
\begin{cases} 
w(v) & \text{ , if $v \in V_L$}\\
\max\{minimax_e(v') \mid (v,v') \in E' \wedge minimax_e(v')\neq \pm \infty \}& \text{ , if $v  \in V_\exists \setminus V_{\pm \infty}$}\\
\min\{minimax_e(v') \mid (v,v') \in E'  \wedge minimax_e(v')\neq \pm \infty\}& \text{ , if $v  \in V_\forall \setminus V_{\pm \infty} $}\\
\pm \infty& \text{ , if $v \in V_{\pm \infty} $}\, .
\end{cases}
$}
$$
with the set $V_{\pm \infty}$, given by
\begin{equation}
V_{\pm \infty}=\{v \in V \setminus V_L \mid \forall v' \in V: (v,v')\in E \Rightarrow minimax_e(v')= \pm\infty \}\, .
\end{equation} 
\end{definition}
The nodes in set  $V_{\pm \infty}$  represent partial variable assignments after which neither constraint system can be fulfilled. Thus, it contains nodes that cannot be reached via legal variable assignments, but note that $V_{\pm \infty}$ does not contain all nodes resulting from illegal moves. 
Having this extended minimax value at hand it becomes apparent that an adapted minimax algorithm can be applied to solve \QmIPPlus instances. For a more detailed discussion of the game-tree representation we refer to \cite{DissMichael}. 

\begin{example}\label{Example::FirstQIPID}
Let $\pmb{c}=(1,1,2)^\top$, $\pmb{Q}=(\exists, \forall, \forall)$, $\mathcal{L}_1=\{1,2,3\}$ and $\mathcal{L}_2=\mathcal{L}_3=\{0,1\}$ and the two constraint systems given as follows:
$$A^\exists \pmb{x} \leq \pmb{b}^\exists:\quad
\begin{array}{rcrcrcl}
				 x_1 	& + 	& x_2 	&+&x_3	  &\leq	& 3  \\
				 2x_1 	& - 	& 3x_2 	&&	  &\leq	& 3  

\end{array}
$$
$$
A^\forall \pmb{x} \leq \pmb{b}^\forall: \quad
\begin{array}{rcrcrcl}
				 -x_1 	& + 	& 2x_2 	&+&x_3	  &\geq	& 0  
\end{array}
$$

The game tree of this instance is give in Figure \ref{Fig::Ex_GameTree}. 
\begin{figure}[h!]
\centering
		\begin{tikzpicture}[scale=0.9,
			level 1/.style={sibling distance=4cm},
			level 2/.style={sibling distance=2cm},
			level 3/.style={sibling distance=1cm},
			circ/.style={circle,draw=black,inner sep=0pt,minimum size=0.6cm},
			rect/.style={rectangle,draw=black,inner sep=0pt,minimum size=0.55cm},
			empt/.style={draw=none,inner sep=0pt,minimum size=0.5cm}
		]
			\node[Top,thick] (a1) {2}
				child { node[Middle,thick] (b1) {2}
					child { node[Middle] (c1) {3}
						child { node[Bottom,thin] (d1) {\small $+\infty$}
							child[grow=left] { node (d0) {}
								child[grow=up] { node (c0) {}
									child[grow=up] { node (b0) {}
										child[grow=up] { node (a0) {}
											edge from parent[draw=none] node {$x_1=$}
										}
										edge from parent[draw=none] node {$x_2=$}
									}
									edge from parent[draw=none] node {$x_3=$}
								}
								edge from parent[draw=none]
							}
							edge from parent[black,thin, dotted] node[left=2pt] {0}
						}
						child { node[Bottom] (d2) {3}
							edge from parent[->,black,thick] node[right=2pt] {1}
						}
						edge from parent[->,black,thick] node[left=4pt] {0}
					}
					child { node[Middle] (c2) {2}
						child { node[Bottom] (d3) {2}
							edge from parent[->,black,thick] node[left=2pt] {0}
						}
						child { node[Bottom] (d4) {4}
							edge from parent[->,black,thick] node[right=2pt] {1}
						}
						edge from parent[->,black,thick] node[right=4pt] {1 }
					}
					edge from parent[->,black,thick] node[left=7pt] {1}
				}
				child { node[Middle] (b2) {\small $-\infty$}
					child { node[Middle] (c3) {\small $\pm \infty$}
						child { node[Bottom] (d5) {\small $\pm \infty$}
							edge from parent[black,thin] node[left=2pt] {0}
						}
						child { node[Bottom] (d6) {\small $\pm \infty$}
							edge from parent[black,thin] node[right=2pt,thin] {1}
						}
						edge from parent[black,thin, dotted] node[left=4pt] {0}
					}
					child { node[Middle] (c4) {\small $-\infty$}
						child { node[Bottom] (d7) {3}
							edge from parent[black,thin] node[left=1pt,thin] {0}
						}
						child { node[Bottom] (d8) {\small $-\infty$}
							edge from parent[black,thin] node[right=2pt] {1}
						}
						edge from parent[black,thin] node[right=4pt] {1}
					}
					edge from parent[black,thin] node[right=3pt] {2}
				}
				child { node[Middle] (b3) {\small $-\infty$}
					child { node[Middle] (c5) {\small $\pm\infty$}
						child { node[Bottom] (d9) {\small $\pm \infty$}
							edge from parent[black,thin] node[left=2pt] {0}
						}
						child { node[Bottom] (d10) {\small $\pm \infty$}
							edge from parent[black,thin] node[right=2pt,thin] {1}
						}
						edge from parent[black,thin] node[left=4pt] {0}
					}
					child { node[Middle] (c6) {\small $-\infty$}
						child { node[Bottom] (d11) {\small $\pm\infty$}
							edge from parent[black,thin, dotted] node[left=1pt,thin] {0}
						}
						child { node[Bottom] (d12) {\small $-\infty$}
							edge from parent[black,thin] node[right=2pt] {1}
						}
						edge from parent[black,thin, solid] node[right=4pt] {1}
					}
					edge from parent[black,thin, dotted] node[right=7pt] {3}
				}
			;
		\end{tikzpicture}
		\caption{Game tree for the example instance. Illegal moves are indicated as dotted lines. The optimal winning strategy is indicated by thicker arrows.}\label{Fig::Ex_GameTree}
\end{figure}
The values at the leaves are assigned using the weighting function $w(v)$. The value of inner nodes were received by applying the extended minimax value.  Hence, node values show the value according to optimal play starting from this node, whereat $\pm \infty$ denotes an illegal game situation that cannot occur during legal play. In the first existential stage $\mathcal{F}^{(1)}= \{1,2\}$ and in particular, $3 \not \in \mathcal{F}^{(1)}$ since the existential constraint system can no longer be fulfilled for $x_3=3$. This can also be seen by looking at the leaves in the subtree beneath the decision $x_1=3$, wich all have the value $-\infty$ or $\pm \infty$. Note that if $x_1=2$  the universal variable $x_2$ cannot be set to $0$, since a violation of the universal system would become inevitable. In particular,  $\mathcal{F}^{(2)}(x_1=2)=\{(1,0), (1,1)\}$. Furthermore, even though the extended minimax value of the node resulting from setting $x_1=2$ is $-\infty$ this move itself is not illegal; it only results in a game situation where no winning strategy for the existential player exists. Thus, even though $x_1=2$ is a bad move, it is still legal.  The optimal course of play (the principal variation) is $x_1=1$, $x_2=1$, $x_3=0$ with objective value $\pmb{c}\pmb{x} = 2$.  
\end{example}

\subsection{
A closer look at the \QmIPPlus 
}

\subsubsection{The no-suicide-rule}

One crucial point to motivate and explain the \QmIPPlus is the no-suicide-rule. We use a small, rather degenerated, example in Table~\ref{tab:discussion1} with all binary variables, in order to show why it is noteworthy at all. 
\begin{table}[ht]
    \caption{A simple example for two constraint systems.\label{tab:discussion1}}
\begin{subequations}
 \resizebox{\linewidth}{!}{
\begin{tabular}{c|c|r|r}
 $\pmb{Q}\circ\pmb{x}$ &\Domain
  & \multicolumn{1}{c|}{$A^\exists\pmb{x}\leq \pmb{b}^\exists$}&   \multicolumn{1}{c}{$A^\forall\pmb{x}\leq \pmb{b}^\forall$} \\\hline
$\exists x_0\ \forall y_0\ \exists x_1 \ \forall y_1 \ \exists x_2$& $\{0,1\}^5$& 
\begin{tabular}{rN}
     $x_1\geq 1 $&eq::1  \\
    $\frac{1}{2}y_0 +x_2 =\frac{1}{2}$&eq::2
\end{tabular}&
\begin{tabular}{rN}
$x_1 \leq 0 $&eq::3\\
 $y_0 -x_0 \leq 0$&eq::4\\
\end{tabular}
\end{tabular}
}
\end{subequations}
\end{table}
The existential constraint system consists of inequality \eqref{eq::1} and equality \eqref{eq::2} and the universal constraint system contains inequalities \eqref{eq::3} and \eqref{eq::4}. An (illegal) line of play may be this: $$x_0=0 \rightarrow y_0 = 0 \rightarrow x_1 = 1  \rightarrow y_1=0 \rightarrow x_2 = 0  $$
Both systems are violated and the question is: what is the result? The solely symbolic value $\pm \infty$ describes this state by indicating the violation of both constraint systems, which cannot be placed in the ordered set $\{-\infty\} \cup \mathbb{R} \cup \{\infty\}$. 
However, the situation can be resolved when considering that both players where only allowed to take moves from their legal domain $\mathcal{F}$. Of course, when a player violates this rule, arbitrary nonsense can occur. Therefore, we have to detect who made the first illegal move. 

Again, let us inspect
Table~\ref{tab:discussion1}. In principle, at the beginning of the game, both system are "alive", 
as binary assignments of the variables exist such that both systems for themselves can be fulfilled. Due to constraints \eqref{eq::1} and \eqref{eq::3} an assignment satisfying one system automatically violates the other system. Therefore no play can result in both systems being satisfied. Unclear is which player is first able to force the opponent's set of legal moves to be empty. Setting $x_0=0$ is legal, as it does not affect the satisfiability of the existential constraint system. This forces the universal player to assign $y_0=0$, in particular, $\mathcal{F}^{(2)}((0))=\{0\}$. This assignment, however, results in the unsatisfiability of the existential constraint system, because constraint \eqref{eq::2} can no longer be fulfilled:  whatever both players will do next, no assignment of the forthcoming variables exist, that both adhere to the binary domain and fulfill $A^\exists \pmb{x} \leq \pmb{b}^\exists$. Thus, by definition, the existential player has no legal move and the result is $-\infty$. Going back to the initially presented illegal line of play: Setting $x_1=1$ was an illegal move and therefore should never have been executed. As the existential player cannot prevent the damning assignment $y_0=0$ no strategy exists to ensure the satisfaction of the existential constraint system, making this instance infeasible, indicated by the optimal outcome of $-\infty$.

The situation changes when inequality \eqref{eq::4} is replaced by $$
     y_0 \geq x_0\, .
$$
In this case, by setting $x_0=1$, the existential player can force the universal player to assign $y_0=1$. This preserves the satisfiability of the existential constraint system and $x_1=1$ is a legal move that immediately results in a violation of the universal constraint system. In this case, the result is $+\infty$. 

\subsubsection{Simply Restricted \QmIPPlus\label{Sec::SimplyRestricted}}
In general, checking the legality of a universal (or existential) variable assignment, requires the feasibility of an IP, \IE the solution of an \NP-complete problem in every search node (see Definition \ref{Def::QIP+}). Intuitively,
however, the resulting theoretical complexity appears unnatural and artificial: In terms of game
playing determining the set of legal actions defined by the rules of the game is mostly a result
of evaluating some if-then-else rules. Similarly in a robust optimization setting, identifying anticipated actions for the intangible adversary at each point in time intuitively should not be demanding at all.

We  take advantage of this observation and introduce the \textit{simply restricted} \QmIPPlus for which the checking routine for the legality of a universal variable assignments 
can be simplified.

\begin{definition}[Simply Restricted \QmIPPlus]\label{Def::SimplyRestrictedQIPID}~\\
A \QmIPPlus is called \emph{simply restricted}, if the  following holds:
For any universal variable $i \in [n]$, $Q_i = \forall$, assignment  $\tilde{x}_i \in \Domain_i$, and  partial variable assignment $\hat{x}_1 \in \Domain_1, \ldots , \hat{x}_{i-1} \in \Domain_{i-1}$ it holds: 
\begin{align}
&\tilde{x}_i \not\in \mathcal{F}_i ( \hat{x}_1,  \ldots , \hat{x}_{i-1} ) \Rightarrow \nonumber\\
&
\exists \, k \in \{1,\ldots,m_\forall\}: \sum_{j<i} A_{k,j}^\forall \hat{x}_j + A^\forall_{k,i}\tilde{x}_i + \sum_{j>i} \min_{x_j\in \Domain_j} A^\forall_{k,j} x_{j} \not\leq b^\forall_k \, . \label{Condition:EasyDetectIllegalAllMove}
\end{align}

\end{definition}


Condition \eqref{Condition:EasyDetectIllegalAllMove}  ensures that a universal variable assignment $\tilde{x}_i \in\Domain_i$ is illegal, if there is a universal constraint that can no longer be fulfilled, even in the best case. Hence, rather than having to repeatedly solve the entire feasibility problem in order to check legality, it suffices to check the universal constraints \textit{separately} in order to verify legality.

In \cite{DissMichael}, in order to be simply restricted, the additional requirement that at least one legal universal variable assignment has to be available at all times, even if the existential player played illegally. This requirement is no longer necessary as we will see in Section \ref{Sec::AlphaBetaForQIP+}: situations during the search process where no legal universal assignment is left are handled separately by ensuring that this search node was reached via legal existential play.

It is noteworthy, that so far no automatic detection of simply restricted instances is implemented and the responsibility to detect it is handed over to the user. Some structural features where already analyzed in \cite{DissMichael} and many instances arising from robust optimization with decision-dependent uncertainty exhibit this property. 

\subsubsection{Continuous Variables in the \QmIPPlus \label{Sec::Continuous}}

In the definition of the \QmIPPlus we demand a final universal variable block. Having the game interpretation in mind there is an easy explanation for this technicality: If the existential player would be in charge of the final move, the only things he would check are the satisfiability of the existential constraint system and the objective value. But what about the case that the final existential variable assignment results in a violation of the universal constraint system? This should result in the payoff $-\infty$. By demanding a final universal variable block this is implicitly dealt with, as now a violated universal constraint system implies an empty set of legal moves for the universal player, resulting in the  payoff $\max_{x^{(T)}\in \emptyset} \pmb{c}^{(T)} \pmb{x}^{(T)} =-\infty $. 

Furthermore, for \QmIP we allowed continuous variables in the final {\em{existential}} variable block. The definition of the \QmIPPlus seems to prohibit this option as the final variable block needs to be universally quantified. However, there are two ways to bypass this in order to again allow a final existential variable block with continuous variables: a) Ensure that variables of the final existential variable block $T$ do not appear in the universal constraint system. This way the legal assignment of the previous universal variable block $T-1$ ensures that the universal constraint system is satisfied. b) When algorithmically dealing with such problems we can check the universal constraint system explicitly for every play (filled variable vector $\pmb{x}$). This is equivalent to adding a final  universally quantified dummy variable, which cannot alter the constraint system, but cannot be set legally if $A^\forall \pmb{x} \leq \pmb{b}^\forall$  no longer can be satisfied.

\subsubsection{Distinction between variable bounds and constraints}
For the basic \QmIP, modelling existential variable bounds via the domain $\Domain$ and via the existential constraint system is equivalent. For the \QmIPPlus, however, enforcing a bound via the variable domain is fundamentally different from adding a bound constraint in one of the constraint system. Consider the example with an existential variable $x_1$ and a universal variable $x_2$, both binary. The universal as well as the existential constraint system solely consists of the constraint $x_1+x_2 \leq 1$ and let the objective function be the maximization of $x_1$. Having no objective function makes this instance clearly feasible, where setting $x_1=1$ is optimal. Assume now we want to ensure $x_2 \geq 1$ as a rule for the universal player. If the constraint $x_2\geq 1$ is added to the universal constraint system, setting $x_1=1$ remains legal and in fact optimal, as it results in an empty legal domain for $x_2$. If, on the other hand, this is done by setting the lower bound of $x_2$ to $1$, setting $x_1$ to $1$ is no longer a legal move ($\Legal^{(1)}=\{0\}$), as with $x_1=1$ $\nexists x_2 \in \Domain : x_1+x_2 \leq 1$.
In Figure \ref{Fig::ExampleBounds} the corresponding game trees are illustrated where illegal variable assignments are dotted and each node holds its extended mininax values.
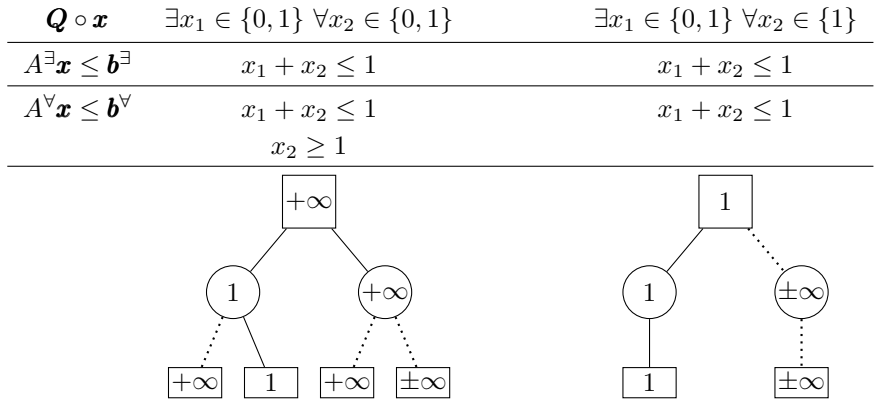
\begin{figure}[ht]
\centering
\begin{tabular}{cccc}
$\pmb{Q}\circ \pmb{x}$&$\exists x_1 \in \{0,1\}\ \forall x_2 \in \{0,1\}$&\hspace*{1cm}&$\exists x_1 \in \{0,1\}\ \forall x_2 \in \{1\}$\\\hline
$A^{\exists}\pmb{x}\leq \pmb{b}^{\exists}$ & $x_1+x_2 \leq 1$ && $x_1+x_2 \leq 1$\\\hline
$A^{\forall}\pmb{x}\leq \pmb{b}^{\forall}$& $x_1+x_2 \leq 1$ && $x_1+x_2 \leq 1$\\
&$x_2\geq1$\\\hline
 &
\begin{tikzpicture}[-,>=stealth', line width = 2pt,level 1/.style={sibling distance=2cm},
level 2/.style={sibling distance=1cm}, 
level distance=1.2cm] 
\node[Top] {$+\infty$}
child{ 
    node [Middle] {$1$}
	    child{ node [Bottom]{$+\infty$}
        	edge from parent[black,dotted,thick]}
	 	child{ node [Bottom]{$1$}
	 	edge from parent[black] node[right=2pt,thin] {}
	 	}
	edge from parent[black,thin] node[left=2pt,thin] {}
}
    child{ 
		node [Middle]{$+\infty$}
	 	child{ node [Bottom]{$+\infty$}
	 	edge from parent[black,dotted,thick]}
	 	child{ node [Bottom]{$\pm\infty$}
	 	edge from parent[black,dotted,thick]}	 		
    edge from parent[black,thin] node[right=2pt,thin] {}
	 		};
\end{tikzpicture}
&&
\begin{tikzpicture}[-,>=stealth', line width = 2pt,level 1/.style={sibling distance=2cm},
level 2/.style={sibling distance=1cm}, 
level distance=1.2cm] 
\node[Top] {$1$}
child{ 
	 		node [Middle] {$1$}
	 		child{ node [Bottom]{$1$}}
	 		edge from parent[thin]
	 		}
child{ 
	 		node [Middle] {$\pm\infty$}
	 		child{ node [Bottom,solid]{$\pm\infty$}
	 		edge from parent[black,dotted,thick] node[right=2pt,thin] {}
	 		}
	 		edge from parent[black,dotted,thick] node[right=2pt,thin] {}
	 		};
\end{tikzpicture}
\end{tabular}
\caption{Example Bounds vs. Constraints\label{Fig::ExampleBounds}}
\end{figure}

Hence, the illustrated instances answer two different questions: One addresses the question, whether in the initially presented instance prematurely fixing  $x_2=1$ results in a viable strategy for the universal player, while keeping $x_2=0$ a conceivable but rejected option. If, on the other hand, the domain $\Domain$ is altered,  a fundamentally changed structure arises (see the changed game tree), deeming $x_2=0$ not part of any legal play and for that matter it can no longer serve as validation of the legality of $x_1=1$.

To complete the example, we point out, that adding $x_2\geq1$ to \textit{both} constraint systems once again results in $x_1=1$ being illegal. In fact, adding such constraints is equivalent to explicitly setting the bounds of the variable.

Therefore, when modeling such bound constraints, one first has to consider whether this bound constitutes a global restriction or a local instruction for one of the players. 


\subsubsection{Classification of the \QmIPPlus}
The following special cases of the \QmIPPlus illustrate 
the connection to other problem classes and show that the notion of legal moves and the ``no-suicide-rule'' is well embedded in our multistage domain.
\begin{enumerate}
    \item If $T=1$ with $Q^{(1)}=\exists$ no universal variables exist and an MIP arises.
    \item If there are no universal constraints, legal universal variable assignments only have to adhere to their initial domain $\Domain$. Legal existential variable assignments still must not result in a violation of the existential constraint system. 
    The corresponding standard \QmIP, which ensures the fulfillment of the existential system at each leaf, results in the same optimal objective value. 
    \item A QBF arises, if $\Domain=\{0,1\}^n$, no universal constraint system and no objective function exists, and each constraint is a clause, i.e. $A^\exists \in \{-1,0,1\}^{m_\exists \times n}$ and $b^\exists_i=|\{j \in [n] \mid\ A^\exists_{i,j}=1\}|-1$ for each $\leq$-constraint $i \in \{1,\ldots,m_\exists\}$.
    \item Assume universal variables are constrained by $A^\forall \pmb{x} \leq \pmb{b}^\forall$, but no existential variables occur in this system. Due to the condition $\{\pmb{x}\in \Domain \mid A^\forall \pmb{x} \leq \pmb{b}^\forall \} \neq \emptyset$, there are always legal moves for the universal player and the optimal result cannot be $\pm\infty$. The resulting problem is similar to discrete robust optimization with polyhedral discrete uncertainty set and was investigated in the quantified programming setting in \cite{hartisch2016quantified,DissMichael,goerigk2021multistage}.
    \item The universal constraint system 
    is modelled in a way that existential variables assignments only alter the set of legal universal variable assignments, but never result in irrevocable violation of the universal constraint system. In other decision-dependent domains this is explicitly mentioned as a task the modeler must deal with \cite{lappas2018robust,feng2021multistage}, or is ensured by allowing only specific structures \cite{poss2014robust,nohadani2018optimization}.
    
    \item Assume, all constraints available in the universal constraint system are also present in the existential constraint system. Then the existential player has no chance of violating the universal constraint system as such move would be deemed illegal for him as well. In particular, due to $\{\pmb{x}\in \Domain \mid A^\forall \pmb{x} \leq \pmb{b}^\forall \} \neq \emptyset$, legal play by the existential player always results in a satisfied universal constraint system.

   \end{enumerate}

Hence, \QmIPPlus expands the modeling options of \QmIP, explicitly dealing with situations where the uncertainty set---the legal domain of universally quantified variables---can be restricted; even to be the empty set. In a \textit{game against nature} \cite{Papadimitriou} this option is likely to be prevented by the modeler, as it seems implausible to be able to act as the decision maker in such a way that nature has no move left and surrenders. However, there are settings, where this option can come in handy, e.g. when the universal player represents a competitor.

Also note that the explicit reduction from \QmIPPlus to \QmIP \cite{hartisch2019mastering,DissMichael} illustrates that restricting the universal player also can be done implicitly within a standard \QmIP by sophisticated modelling techniques that penalize certain variable assignments, making them detrimental in the optimization context.

Our solver can deal with all of the above mentioned cases. In Section \ref{Sec::Experiments} we can show that for a problem with decision-dependent uncertainty we are able to keep up even with domain specific techniques.

\section{The Solver and the Solution Process \label{Sec::Solver}
}

\subsection{The Solver Yasol}
Yasol is a search based solver for quantified integer linear programs with interdependent domains and mixed-integer recourse in the final decision stage. In particular, it is able to deal with multistage optimization problems with polyhedral and decision-dependent uncertainty set and is available as open-source software\footnote{Sources and some further information are available at \url{www.q-mip.org}}. 

The basic data structure of Yasol is a database of existential constraints, with an own memory management that allows to shrink the database in place when learnt constraints are not useful enough. Let us call this the primary constraint database. To solve relaxations, the whole instance is duplicated in the constraint system of an LP-solver. An abstract LP solver is used as an interface to the LP-solvers Cplex or CLP. Lets call this the LP constraint database.

The constraints in the primary database are organized in rows and a column supporting list keeps track which variables are contained in which constraints. The rows of the database and the columns of the supporting list keep additional information, where to find the variable, i.e. the column structure knows that a variable $x$ is in constraint $c$ at position $i$. Vice versa, the row knows where to find a variable in the supporting column structure. 

Internally, general integer variables are first transformed to have the lower bound $0$ and are subsequently binarized. Therefore,  upper bound constraints for such binarized variables are explicitly added to the existential as well as the universal constraint system creating an \QmIPPlus. 
The number of bits depends on the input, here it is of advantage that all variables must have upper and lower bounds on their domains a priori. There is an additional data structure which keeps track of bits of general integer variables. In this way, the integer information is kept and can be exploited.  

As only binary variables are used for branching, the branching is not done symbolically in form of adding constraints to the system, but the variables can be assigned, i.e. temporarily set to $0$ or $1$. When a variable is set during the search process, it is carefully documented which consequences this has. Some constraints may be temporarily taken out of the constraint system and when the variable is un-assigned, the major datastructure will be in the same state as they were before. This allows to learn a new constraint after a conflict-analysis in such a way that the search process continues after learning as if the new constraint had been in the system from the beginning on. 

On the LP-side, constraints are added in a greedy lazy manner in order to avoid simplex-computations on thousands of superfluous constraints. They are added and deleted during the tree search in depth-first-search manner, because the major algorithm, the \Alphabeta algorithm works that way. Cutting planes like GMI or cover cuts are handled in the same way as normal constraints. However, the constraints of the primary database and of the LP are strictly seperated and usually cuts are not learnt into the primary database and vice versa. 


\subsection{Input and Output Format for the Solver}
\QmIPPlus instances have to adhere the following mild restrictions in order to be solvable with our solver:
\begin{itemize}
\item[a)] The final variable block has to be existential, \IE $Q_n=\exists$ or rather $=Q^{(T)}=\exists$, which can be achieved by adding a single dummy variable.
\item[b)] Variables with a continuous domain have to be part of a final \textit{existential} variable block $T$. All other domains $\Domain^{(t)}$ for $t =1,\ldots,T-1$ have to be discrete.
\item[c)] The domain $\Domain$ has to be bounded, i.e. for each variable a lower and upper bound is given.
\item[d)] The uncertainty set, i.e. the domain of the universally quantified variables can either be boxed, polyhedral or decision-dependent, where for the latter two a \textit{linear} universal constraint system $A^\forall \pmb{x} \leq \pmb{b}^\forall$ is used.
 \end{itemize}

\textit{The QLP-file format} extends the CPLEX LP-file format 
by adding the keywords ALL, EXISTS and ORDER, which enable the identification of universal and existential variables as well as the  order of the variables, respectively. In order to allow constraints restricting the universally quantified variables, we further added the keyword UNCERTAINTY SUBJECT TO. 

The output of our solver comprises of the optimal objective value, as well as the principal variation, which is the optimal path in the underlying strategy leading to this objective value. In particular, the first stage solution can be stated. We  provide an XML formatted solution file, that also can hold the incumbent solution in case of exceeding the prespecified runtime. An example of the file formats is given in Appendix \ref{Appendix::FileFormat}.


\subsection{Central Algorithm\label{ssecCeAl}}

In this section we present the central algorithm as  briefly outlined in the proceedings \cite{YasolACG17}. In an intermediate step, we assume a standard \QmIP without universal constraints, before we consider the general algorithm for \QmIPPlus in Section \ref{Sec::AlphaBetaForQIP+}. 

The heart of Yasol's search engine is a customized \Alphabeta algorithm as outlined in Algorithm~\ref{alg1a} that is branching on binary variables.\footnote{Its abilities to deal with integer variables is implemented via binarization. The option to binarize universal variables is only possible due to the introduction of the universal constrain system in \QmIPPlus which is discussed in more detail in the next section.}  This extended \Alphabeta algorithm walks through the search space recursively and fixes variables to $0$ or $1$ when going into depth or relaxes the fixations again when returning from a subtree. In the algorithm the decision level $d$, indicates the number of active decision nodes from the current search node $v$ to the root node, where implied unit-propagated variables are not counted. \\
\begin{remark}
As continuous variables are only allowed in the very last stage they are not subject to branching and are assigned according to a call to the LP solver at a leaf.
\end{remark}

 
Essentially, Algorithm~\ref{alg1a} is an extension of the conventional \Alphabeta algorithm \cite{KM75} that develops if the lines \ref{Line::Alphabeta2} and \ref{Line::Alphabeta3} within Alg.~\ref{alg1a} are deleted, and the lines \ref{Line::Alphabeta9Start} to \ref{Line::Alphabeta9End} are replaced by Code Snippet \ref{Snippet}.

{
\begin{algorithm}[ht]
\small\nlset{\ref{Line::Alphabeta9Start}} \eIf{$x_v$ is existential variable}
{
value := alphabeta(d+1, max(alpha, score), beta)\; \tcp*[h]{$value$ takes the minimax result from child node}\\
} { 
value := alphabeta(d+1, alpha, min( score, beta))\;
}
\captionof{Snippet}{Code Snippet of the Original \Alphabeta\label{Snippet} Algorithm}
\end{algorithm}
}
\vspace{0.4cm}
 The original \Alphabeta algorithm is an improved minimax-search, and technically, a depth-first branch-and-bound procedure which deals with lower and upper bounds $\alpha$ and $\beta$. At each search node $v$, the window between $\alpha$ and $\beta$ indicates which minimax-values may be of interest at $v$. If the minimax-value at $v$ is below $\alpha$, $v$ will be irrelevant for the max-player. If it is above $\beta$, the node will be irrelevant for the min-player. Thus, as soon as the window at a node $v$ collapses, the subtree below $v$ is cut off. There have been many modifications and improvements, e.g. MTD(f) \cite{plaat1996best} and
(nega)scout \cite{pearl1980scout,reinefeld1983improvement}, but most of them brought no benefits for our special situation. The most useful ones are addressed in the following in more detail. 
 
The major advantage of the algorithm is that, in the best case, it determines the minimax-value of the root with only a minimum number of search nodes. This means, when the search breadth uniformly is $b$ and the search tree has uniform depth $d$, the best-case number of examined leafs is
$$\lfloor 2^{d/2} \rfloor + \lceil 2^{d/2}\rceil - 1.$$ 
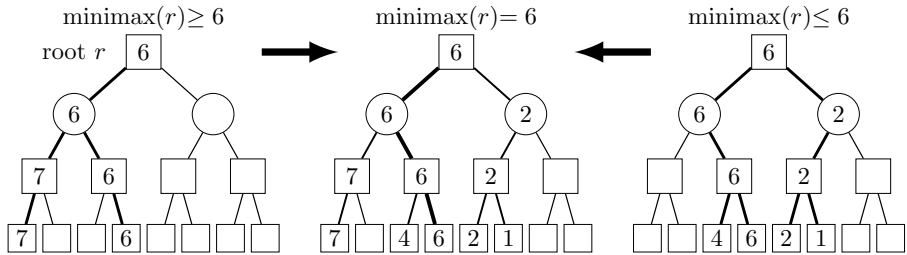
\begin{figure}[ht]

\begin{center}
\resizebox{\textwidth}{!}{
\begin{tikzpicture}[-,>=stealth', line width = .5pt,level 1/.style={sibling distance=2cm},
level 2/.style={sibling distance=1cm}, 
level 3/.style={sibling distance=.5cm},
level distance=.9cm] 
\tikzstyle{Top} = [rectangle,minimum height=0.5cm,minimum width=0.5cm, black, draw=black, align=center, inner sep=0pt, text centered,line width=.3pt]
\tikzstyle{Middle} = [circle,minimum height=.5cm,minimum width=.6cm, inner sep=0pt, black, draw=black, align=center, text centered,line width=.3pt]
\tikzstyle{Bottom} = [rectangle,minimum height=0.4cm,minimum width=0.4cm, black, draw=black, align=center, inner sep=0pt, text centered,line width=.3pt]
\node[Top] (Top) {$6$}
child{ 
    node [Middle] {$6$}
	    child{ node [Top]{$7$}
	    	child{ node [Bottom]{$7$} edge from parent[very thick]}
	    	child{ node [Bottom]{}edge from parent[line width=.5pt]}
	 	}
	 	child{ node [Top]{$6$}
	    	child{ node [Bottom]{} edge from parent[line width=.5pt]}
	    	child{ node [Bottom]{$6$} edge from parent[very thick];}	    
	    	}
	    	edge from parent[very thick];
}
child{	node [Middle] {}
	    child{ node [Top]{}
	    	child{ node [Bottom]{} }
	    	child{ node [Bottom]{}}
	 	}
	 	child{ node [Top]{}
	    	child{ node [Bottom]{}}
	    	child{ node [Bottom]{}}
	    	}
};
\node[above of = Top,yshift=-.5cm]{minimax($r$)$\geq 6$};
\node[left of = Top,]{root $r$};
\draw[line width=3pt,-latex] (1.7,0) -- (2.8,0);
\draw[line width=3pt,latex-] (6.2,0) -- (7.3,0);
\node[Top, left of = Top,xshift=5.5cm] (Top2) {$6$}
child{ 
    node [Middle] {$6$}
	    child{ node [Top]{$7$}
	    	child{ node [Bottom]{$7$}}
	    	child{ node [Bottom]{}edge from parent[line width=.5pt]}
	    	edge from parent[thick]
	 	}
	 	child{ node [Top]{$6$} 
	    	child{ node [Bottom]{$4$} edge from parent[thick]}
	    	child{ node [Bottom]{$6$} }	    
	    	}
	    	edge from parent[ultra thick];
}
child{	node [Middle] {$2$}
	    child{ node [Top]{$2$}
	    	child{ node [Bottom]{$2$} }
	    	child{ node [Bottom]{$1$}}
	 	}
	 	child{ node [Top]{}
	    	child{ node [Bottom]{}}
	    	child{ node [Bottom]{}}
	    	edge from parent[line width=.5pt]
	    	}
	    	edge from parent[thick]
};
\node[above of = Top2,yshift=-.5cm]{minimax($r$)$= 6$};

\node[Top, left of = Top2,xshift=5.5cm] (Top3) {$6$}
child{ 
    node [Middle] {$6$}
	    child{ node [Top]{}
	    	child{ node [Bottom]{}}
	    	child{ node [Bottom]{}}
	    	edge from parent[line width=.5pt]
	 	}
	 	child{ node [Top]{$6$}
	    	child{ node [Bottom]{$4$}}
	    	child{ node [Bottom]{$6$}}
	    	}
	    edge from parent[very thick]
}
child{	node [Middle] {$2$}
	    child{ node [Top]{$2$}
	    	child{ node [Bottom]{$2$} }
	    	child{ node [Bottom]{$1$}}
	 	}
	 	child{ node [Top]{}
	    	child{ node [Bottom]{}}
	    	child{ node [Bottom]{}}
	    	edge from parent[line width=.5pt]
	    	}
	    	edge from parent[very thick]
};
\node[above of = Top3,yshift=-.5cm]{minimax($r$)$\leq 6$};
\end{tikzpicture}
}
\caption{A strategy for the maximizing player on the left, a strategy for the minimizing player on the right, and both merged in the middle with the bold path from root to one of the leafs showing the principle variation.\label{fig:strats1}
}
\end{center}

\end{figure}
This corresponds to the number of leafs for the maximizing player's strategy plus the number of leafs for the minimizing player's strategy minus that single leaf which both strategies have in common.  The principal variation---the path from that common leaf to the root---shows a game under the assumption that both players play optimally. Detecting and verifying the principal variation is essential in order to obtain the objective value.
In Figure \ref{fig:strats1} a small example illustrates this idea. How efficient the \Alphabeta algorithm can traverse the tree, i.e. how many cutoffs can be applied by $\alpha$ and $\beta$ depends on the search node ordering.  
Having reasonable knowledge of which universal variable assignments are particularly vicious can massively boost the search process. Several heuristics exist to analyze and find such promising moves in a game tree search environment \cite{Killer,WinandsHistory,Schaeffer,Plaat} as well as in mixed integer programming \cite{achterberg2005branching}. We utilize these ideas in a heuristic fashion to speed up our search process, where the so called killer-heuristic \cite{Killer} turned out to be quite effective. Similarly, for existential nodes we can rely on heuristic branching rules \cite{achterberg2005branching}. Furthermore, we can utilize the well-known concept of  monotonicity \cite{cadoli2002algorithm} for both universal as well as existential variables, that allows us to omit certain subtrees a priori since solving the subtree of its sibling is guaranteed to yield the desired  minimax value.

\begin{observation}
Considering the special case of an MIP, the strategy for the existential player consists of only a single path in the tree. The counter strategy, however, the proof of optimality, consists of all paths in the tree. But from point of game tree search, the major challenge seems to be to find the exponentially large existential strategy, while for proving optimality we already 'stand on the shoulders of Giants', being able to utilize cutting planes and conflict driven non-chronological backtracking from MIP, QBF and SAT solving. However, for finding existential strategies we are able to provide a fast heuristic approach to tackle the exponential growth (see Section \ref{Sec::SCP}).
\end{observation}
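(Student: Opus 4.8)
The plan is to reduce the Observation to two crisp structural claims about the game tree $G=(V,E,e)$ in the degenerate regime and to read each off directly from the definition of a (winning) existential strategy together with its minimizing-player dual, as illustrated in Figure~\ref{fig:strats1}. The key simplifying fact I would establish first is that in the MIP special case (item~1 of the classification: $T=1$, $Q^{(1)}=\exists$, or more generally $\pmb{Q}=(\exists,\ldots,\exists)$) we have $V_\forall=\emptyset$, since by the Game Tree definition the $j$-th variable sits at level $j-1$ and all variables are existential. Hence every inner node of $G$ is an existential (MAX) node and the whole alternation collapses to a single player.

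For the first claim (the existential strategy is a single path) I would instantiate the strategy definition at $V_\forall=\emptyset$. That definition forces each node in $V'\cap V_\exists$ to have exactly one child in $S$ (unless its set of legal moves is empty) and each node in $V'\cap V_\forall$ to retain all its legal children. Since $V'\cap V_\forall=\emptyset$, every inner node of $S$ has out-degree exactly one, and a finite rooted tree in which every inner node has a unique child is a path. Because leaves of $G$ correspond to complete assignments $\pmb{x}\in\Domain$, this path runs from $r$ to a single leaf and encodes exactly one feasible solution vector, the familiar MIP optimum.

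For the second claim (the counter-strategy, i.e.\ the proof of optimality, is the whole tree) I would first make precise the dual object only sketched in the right-hand tree of Figure~\ref{fig:strats1}: a minimizing-player strategy is obtained from the strategy definition by swapping the roles of $\exists$ and $\forall$, so that each universal node keeps one child while each existential node keeps \emph{all} its legal children. Instantiating this dual at $V_\forall=\emptyset$ forces every inner (existential) node to retain all of its children, so the minimizing strategy equals $G$ itself and therefore contains all root-to-leaf paths. Read through the $\alpha$-$\beta$ decomposition of Figure~\ref{fig:strats1}, the bound $\mathrm{minimax}(r)\ge v$ is witnessed by one existential path, whereas the matching bound $\mathrm{minimax}(r)\le v$ must certify that no leaf exceeds $v$, which in an all-MAX tree is precisely a statement about every path.

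The main obstacle is that the paper formally defines only the existential strategy, while the ``counter strategy'' and the phrase ``proof of optimality consists of all paths'' live at the level of the Figure~\ref{fig:strats1} illustration. The delicate step is thus to pin down the dual (minimizing-player) strategy rigorously and to justify that it is the correct certificate of optimality, i.e.\ that an upper-bound proof for a maximization problem without universal variables necessarily ranges over all leaves. I would settle this by appealing to the extended minimax value of Definition~\ref{Def::ExtendMinMax}: since $\mathrm{minimax}_e(r)=\max\{\mathrm{minimax}_e(v')\mid (r,v')\in E\}$ unfolds recursively over a pure-MAX tree, verifying the maximum requires the value, or an admissible upper bound, at every reachable leaf, which is exactly the full set of paths. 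The remaining clauses of the Observation, concerning the relative difficulty of search versus proof and the analogy to MIP/QBF/SAT techniques, are motivational and carry no formal content to be proved.
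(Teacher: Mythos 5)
Your argument is correct. The paper states this Observation without any proof at all, and your formalization --- instantiating the (winning) strategy definition and its minimizing-player dual at $V_\forall=\emptyset$, so that the existential strategy degenerates to a single root-to-leaf path while the optimality certificate must range over every leaf of the pure-MAX tree --- is precisely the implicit reasoning the paper relies on; the remaining clauses about search difficulty and borrowed MIP/QBF/SAT machinery are, as you note, motivational and carry no formal content.
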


One major extension in Algorithm \ref{alg1a} is the use of a relaxation (see Line \ref{Line::Alphabeta2}), which is very well known in several optimization domains. This opens the opportunity to cut off parts of the search tree with the help of dual information, i.e. dual bounds or cutting planes for the original program, and additionally can be utilized in order to obtain useful branching information. Further details on how this relaxation is built are provided in Section \ref{Sec::Relaxation}. 

The missing abilities of the standard \Alphabeta to organize far jumps in the tree, and thus overcoming the strict chronological node order,  is eliminated with the help of the code lines \ref{Line::Alphabeta9Start}--\ref{Line::Alphabeta9End}. The non-chronological back-jumping is either caused by constraint-learning like for QBF \cite{zhang2002conflict} or in combination with Dual-Farkas certificates, Benders cuts and implication graphs as described in \cite{achterberg2009scip}. 
{
\begin{algorithm}[ht!]
\small
\SetKwComment{Comment}{// }{\relax}


 \nlset{1} \lIf{$v$ is tree leaf} {\Return objective value} 
\hlyA{\nlset{2}\label{Line::Alphabeta2}solve relaxation; extract branching variable $x_v$ of current block; determine  search order 0/1 or 1/0 stored in $polarity$; maybe create some cuts\;
\nlset{3}\label{Line::Alphabeta3}$\textnormal{level\_finished($d$)} := false$\;
}

\nlset{4} \lIf{$x_v$ is existential variable}{score := \(-\infty\)} \lElse { score := \(+\infty\)} 
\nlset{5} \For(\tcp*[h]{examine two child nodes \dots}){$i:=0$  \KwTo 1}{ 
\nlset{6}assign($x_v$, $polarity$[$i$], \dots )\;
\hlyA{ \nlset{7}\label{Line::Alphabeta9Start}\Repeat{there is no backward implied variable}{
\nlset{7.1}\label{Line::Rollback} \If(\tcp*[h]{leave dead-marked recursion levels}){\textnormal{level\_finished($d$)}}{
 unassign($x_v$)\;
 		\lIf{$x_v$ is existential variable}{
			\Return score} 
     \lElse {
 			\Return \(-\infty\)}
 	}
 	
\nlset{7.2}\label{Line::Propagate}  \eIf(\tcp*[h]{cf. unit prop. \cite{zhang2002conflict}}){\textnormal{propagate(.)} 
leads to no conflict}{
\nlset{7.3} 		\If{$x_v$ is existential variable}{
\nlset{7.4}		value := yAlphabeta($d$+1, max(alpha, score), beta)\;
\nlset{7.5} 		{\bf if }{value \(>\) score }{\bf then }{ score := value}\;
\nlset{7.6} 		{\bf if }{score \(\geq\) beta }{\bf then }    	break\;
			}
\nlset{7.7}  		\lElse{\dots analogously \dots
 		}
 	}{
\nlset{7.9}\label{Line::AddReason}addReasonCut(.); \Comment{add conflicting constraint (cf.\cite{zhang2002conflict})}
\nlset{7.10}\label{Line::ImpliedVar}	 extract implied variable and target decision level $target\_d$\;
\nlset{7.11}\label{Line::MarkFinishedA}    \For{$k:=d$  \textbf{down to} target\_d}{ 
\nlset{7.12}\label{Line::MarkFinishedB}  	set $\textnormal{level\_finished($k$)} := true$\;
 	}
\nlset{7.12}	\lIf{$x_v$ is existential variable} {\textbf{return} score}
\nlset{7.13} \lElse{\label{Line::Alphabeta9End}		 \textbf{return}  \(-\infty\)}
}
}
}
\color{black}

\nlset{8} unassign($x_v$)\;

\nlset{9} \eIf{$v$ is existential node}
{
\nlset{9.1} \lIf{value \(>\) score} {\nosemic score := value; \Comment*[h]{MAX node}}
\nlset{9.2} \lIf{score \(\geq\) beta}{\dosemic
  \Return score}
} { 
{
\nlset{9.3} \lIf{value \(<\) score}{\nosemic
 score := value; \Comment*[h]{MIN node}}
\nlset{9.4} \lIf{score \(\leq\) alpha}{\dosemic
\Return score}
} 
}


 }
\nlset{10} \Return score\;
\caption{Core algorithm of the Yasol solver\\ yAlphabeta(node $v$, decision level $d$, double alpha, double beta)}\label{alg1a}
\end{algorithm}
}

\begin{remark}
If the desired outcome of such a \QmIP search process is only feasible or infeasible, and if all LP-related code is eliminated, the procedure essentially degenerates to a variant of the QCDCL algorithm for QBF (cf. e.g \cite{zhang2002conflict}). 
\end{remark}


As will be described in more detail in Section \ref{Sec::Relaxation}, free universal variables are heuristically fixed in the LP-relaxation by setting the respective variable bounds to $0$ or $1$. Let $\pmb{x}_a$ be the part of the variable vector that is temporarily assigned by the depth-first-search and let $\pmb{x}_f$ be the remaining $n_f\in [n]$ free variables. Then  $\max\{\pmb{c}_a \pmb{x}_a + \pmb{c}_f \pmb{x}_f |A^\exists_a \pmb{x}_a + A^\exists_f \pmb{x}_f \leq \pmb{b}^\exists,\ \pmb{x}_f \in [0,1]^{n_f}\}$ is the current LP-optimization problem. The LP-relaxation with these temporarily fixed variables gives a local upper bound on the objective value or, if the LP is already infeasible, a dual ray that is used to construct a new constraint for the original system of constraints. The dual of the LP-relaxation has the form min$\{\pmb{\pi}(\pmb{b}^\exists-A^\exists_a \pmb{x}_a)\mid A^\exists_f\pmb{\pi} \geq \pmb{c}_f,\, \pmb{\pi} \leq 0)\}$ and the Farkas lemma leads to the feasibility cut $\pmb{\pi}(\pmb{b}^\exists-A^\exists_a \pmb{x}_a) \leq 0$. This feasibility cut describes a subset of the fixed variables one of which must be changed from $1$ to $0$ or vice versa to reach feasible regions. The cut is added to the database of cuts on the \QmIP-database side. These cuts are not added to the LP. In favorable cases, the new cut proves that the search process can be abbreviated and continued several recursion levels above the current level. With some post-processing and conflict analysis \cite{achterberg2007conflict}, these cuts are made more sparse and play an essential role for the \QmIP solution process.

How to process the new cuts is also closely related to the topic of how to propagate implied variables and how to organize backward implications, as SAT solvers or modern IP solvers like SCIP \cite{achterberg2009scip,AchterbergDiss} do.  The non-chronologic backtracking is realized with the help of lines \ref{Line::Alphabeta9Start} to \ref{Line::Alphabeta9End} in Alg.~\ref{alg1a}. There is a loop around the \Alphabeta procedure call which is repeated as long as backward implied variables occur deeper in the search tree. The procedure propagate(.) performs the implication of variables. In the context of boolean formulae, this process works identically with unit propagation. In more general constraints, further implications or bound propagations can be implied. The method addReasonCut(.) adds the output of a conflict analysis to the constraint database and thus implements the learning algorithm as used by modern IP-solvers as well. With the help of the original and the learnt constraints, the algorithm implicitly maintains an implication graph as described in \cite{achterberg2007conflict} 
\begin{example}~
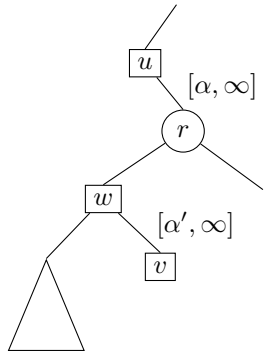
\begin{figure}[ht!]
\begin{center}
\tikzset{
mySub2/.style={
  draw,shape border uses incircle,
  isosceles triangle,shape border rotate=90,isosceles triangle apex angle=45,minimum height=1.2cm,yshift=.11cm},
}

\begin{tikzpicture}[child anchor=north,level distance=.9cm]
\node[opacity=1] {}      
      			child{ node[rectangle ,draw=black, inner sep=3pt,xshift=-.5cm] {$u$}    
      				child{ node[circle,draw= black,xshift=.5cm] {$r$}  
        				child{ node[rectangle ,draw=black, inner sep=3pt,xshift=-.3cm] {$w$}       
        					child{ node[below,mySub2,] {} }
        					child{node[rectangle ,draw=black, inner sep=3pt]{$v$}edge from parent node[right,yshift=.1cm,xshift=.1cm]{$[\alpha',\infty]$}}
              			}
              			child{ node[opacity=1,xshift=.3cm] {}} 
            		edge from parent node[right,yshift=.1cm,xshift=.1cm]{$[\alpha,\infty]$}} 
};  
\end{tikzpicture}
\caption{Search path tracing through a universal node $r$.}\label{fig:abtree}
\end{center}
\end{figure}
 Let us inspect the situation in Fig~\ref{fig:abtree}. Let $u$ be some search node where a decision has been made to branch some binary variable $x_u$ to e.g. $1$. Then some implications may have arisen. Let the implications affect some $(x^1_u,...,x^k_u)$. Further, there have been other branching decisions at subsequent nodes. $r$ may even be a universal node, $w$ and $v$ some existential nodes. Now, assume a conflict arises at node $v$ and the conflict analysis (Line \ref{Line::ImpliedVar}) shows that the polarity of the 0/1-decision variable $x_v$ can already be implied at node $u$. Then Algorithm \ref{alg1a} organizes a rapid rollback from node $v$ to $u$ via Line \ref{Line::Rollback}, by marking all nodes succeeded by $u$ as 'level\_finished(.)' in Lines \ref{Line::MarkFinishedA}  and \ref{Line::MarkFinishedB}, and by messaging '$x_v$ is implied' to node $u$. At node $u$, $x_v$ is now lined up to be implied in Line \ref{Line::Propagate} and the branch below $u$ is re-searched with the implication sequence $(x^1_u,...,x^k_u, x_v)$.    

However, the situation may become more tricky. Let $z$ be the best-known minimax-value at the root, and $z^*_v$ the relaxation value at some node $v$, with $z < z^*_v \leq \alpha_v$. Then it is desirable to generate a local valid cut and a reasonable back-jump-level, as well. Again, let us inspect the tree clipping in Fig~\ref{fig:abtree}. 
Starting from the existential node $u$ with \Alphabeta window $[\alpha,\infty]$, the universal node $r$ is reached. When the left branch of the existential node $w$ has been completed, node $v$ can be reached with the window $[\alpha',\infty]$ with $\alpha' > \alpha$. Then four cases can occur. 
\begin{itemize}
    \item[] $z^*_v \leq z$. This case is already covered as a conflict is learned and a backjump is conducted.
    \item[] $z^*_v > \alpha'$. The regular search process proceeds. 
    \item[] $z^*_v > z, z^*_v \leq \alpha$ 
    \item[] $z^*_v> \alpha, z^*_v \leq \alpha'$. The latter two cases can be handled equivalently. Tightening the LP with the help of $\alpha'$ leads to a cut. However, the search stack must be carefully examined in order to find out the best possible legal back-jump level. In fact, the algorithm may go upwards as long as existential nodes occur, or the universal nodes have the same $\alpha$ values as node $v$. So, in general, if the conflict analysis results in the insight ’... and this is implied already in node $u$’, the search can safely proceed in node $r$, but not in $u$, because the other branch of $r$ might still be relevant. However, if the current $\alpha$-value at $r$ is the same as $\alpha'$ at node $v$, $r$ will never become relevant. 
\end{itemize}
\end{example}

\subsection{From \QmIP to \QmIPPlus\label{Sec::AlphaBetaForQIP+}}
When adding a second constraint system $A^\forall \pmb{x} \leq \pmb{b}^\forall$ that restricts the universal variable assignments it no longer suffices to evaluate leafs, as it might be the case that both constraint systems are violated. Instead it has to be ensured that only legal variable assignments are performed during the search. However, checking the legality of a variable assignment requires the feasibilty and hence the solution of an IP. Therefore, instead of checking the legality of every variable assignment explicitly we show that it suffices to ensure that universal variable assignments have to be set in a legal manner at any time. Illegal existential variable assignments will be detected later on during the search, allowing us to omit finding a solution to the existential IP over and over again. Note that this procedure also could be done the other way around by ensuring the legality of existential assignments explicitly. However, the universal constraint system tends to be much smaller than the existential constraint system: $A^\forall \pmb{x} \leq \pmb{b}^\forall$ describes the uncertainty set, which in most common discrete applications is rather small, e.g. only containing budget constraints \cite{bertsimas2004price}, cardinality constraints \cite{goerigk2021multistage} or simple decision-dependent structures \cite{baggio2021multilevel}. As already motivated in Section \ref{Sec::SimplyRestricted} (simply-restricted \QmIPPlus) in many use cases we additionally can benefit from a simple structure of the uncertainty set, where it is rather easy to see that a specific realization of the universal variable assignment is illegal. 

The main idea why it suffices to ensure the legality of the universal variables is that this way we can be sure that a situation where both $A^\exists \pmb{x} \leq \pmb{b}^\exists$ and $A^\forall \pmb{x} \leq \pmb{b}^\forall$ are violated had to emerge from an illegal existential variable assignment, otherwise there must have been an illegal universal variable assignment, which then can no longer be the case. Due to the requirement $\{\pmb{x} \in \Domain \mid A^\forall \pmb{x} \leq \pmb{b}^\forall\} \neq \emptyset$ at least one system still has a solution at the root node. 
In Algorithm \ref{Algo::ExtendedAlphaBeta} we extend Algorithm \ref{alg1a} to deal with general \QmIPPlus by explicitly checking the legality of universal variable assignments (Line \ref{Line::AllCheck} in Algorithm \ref{Algo::ExtendedAlphaBeta}). For the special case of a search node $v$ with $\mathcal{F}(v)\neq\emptyset$, i.e.~a node without any legal successors, four cases  have to be examined:

\begin{itemize}
    \item[1.] A universal node $v_\forall$ is reached from a universal predecessor $w_\forall$ and $\mathcal{F}(v_\forall)=\emptyset$, i.e.~there is no legal universal variable assignment and the system $A^\forall \pmb{x} \leq \pmb{b}^\forall$ has no solution. With Line \ref{Line::AllCheck} in Algorithm \ref{Algo::ExtendedAlphaBeta} the search cannot not have traversed from $w_\forall$ to $v_\forall$ as it constitutes an illegal move.
    \item[2.] A universal node $v_\forall$ is reached from an existential predecessor $w_\exists$ and $\mathcal{F}(v_\forall)=\emptyset$. 
    \begin{itemize}
        \item[a)] $A^\exists \pmb{x} \leq \pmb{b}^\exists$ is still feasible within \Domain. Then $v_\forall$ has a value $+\infty$, i.e. the universal player lost since she has no legal move left. This case is dealt with in Line \ref{Line::AllInfeasible} in Algorithm \ref{Algo::ExtendedAlphaBeta}, where score = $+\infty$ as no successor of $v_\forall$ is legal  and $A^\exists \pmb{x} \leq \pmb{b}^\exists$ is still feasible.
        \item[b)] $A^\exists \pmb{x} \leq \pmb{b}^\exists$ has no solution within \Domain. Then traversing from $w_\exists$ to $v_\forall$ was an illegal existential variable assignment, which is communicated by returning the value $-\infty$ to $w_\exists$ in Line \ref{Line::ExistInfeasible}. In particular, we know, that the violation of the universal constraint system arose by an illegal move by the existential player in the previous variable block, as the universal constraint system was still intact in the previous universal variable block, due to Line \ref{Line::AllCheck}. 
    \end{itemize}
    \item[3.] The existential node $v_\exists$ is reached from an existential predecessor $w_\exists$ and $\mathcal{F}(v_\exists)=\emptyset$, i.e. the existential player has no legal move left. Either this is detected right away, potentially due to an infeasible LP-relaxation of the existential constraint system resulting in the score $-\infty$ or it will be detected at a later point during the search: Either via case 2b) at a universal node (Line \ref{Line::ExistInfeasible}) or at the leaf (Line \ref{Line::LeafCheckE}). 
    \item[4.] The existential node $v_\exists$ is reached from a universal predecessor $w_\forall$ and $\mathcal{F}(v_\exists)=\emptyset$. Since $w_\forall$ is a universal decision node the traversal to $v_\exists$ constitutes a legal variable assignment. Therefore, this node has the value $-\infty$. Again, this could be detected right away or later on during the search.
\end{itemize}

\begin{algorithm}[ht!]
\SetKwComment{Comment}{// }{\relax}
\small
\hlyB{
\nlset{1} \label{Line::Extended1} \If{$v$ is tree leaf}{
\nlset{1.1} 		{\bf if }{$A^\exists \pmb{x} \not\leq \pmb{b}^\exists$} {\bf then} {{\bf return} \(-\infty\)\label{Line::LeafCheckE}}\;
\nlset{1.2} 		{\bf else if}{ $A^\forall \pmb{x} \not\leq \pmb{b}^\forall$} {\bf then}{ \bf return} \(+\infty\)\label{Line::LeafCheckA}\;
\nlset{1.3} 		{\bf else}{ {\bf return} objective value}\;
} 
}
\hspace*{-2.4\fboxsep}\hlyA{\nlset{2}solve relaxation; extract branching variable $x_v$ of current block; determine  search order 0/1 or 1/0 stored in $polarity$; maybe create some cuts\;
\nlset{3} $\textnormal{level\_finished($d$)} := false$\;
}
\hspace*{-2\fboxsep}\nlset{4} \lIf{$x_v$ is existential variable}{score := \(-\infty\)} \lElse { score := \(+\infty\)} 
\nlset{5} \For(\tcp*[h]{examine two child nodes \dots}){$i:=0$  \KwTo 1}{ 
\hlyB{\nlset{5.1}\lIf{$x_v$ is universal variable {\bf and } $polarity$[i] is illegal assignment \label{Line::AllCheck}}{continue}}
\hspace*{-2\fboxsep}\nlset{6}assign($x_v$, $polarity$[$i$], \dots )\;
\hspace*{-.7\fboxsep}\hlyA{ \nlset{7}\Repeat{there is no backward implied variable}{
\nlset{7.1} \If(\tcp*[h]{leave dead-marked recursion levels}){\textnormal{level\_finished($d$)}}{
 unassign($x_v$)\;
 		\lIf{$x_v$ is existential variable}{
			\Return score} 
     \lElse {
 			\Return \(-\infty\)}
 	}
\nlset{7.2}  \eIf(\tcp*[h]{cf. unit prop. \cite{zhang2002conflict}}){\textnormal{propagate(.)} leads to no conflict}{
\nlset{7.3} 		\If{$x_v$ is existential variable}{
\nlset{7.4}		value := yAlphabetaExtended($d$+1, max(alpha, score), beta)\;
\nlset{7.5} 		{\bf if }{value \(>\) score }{\bf then }{ score := value}\;
\nlset{7.6} 		{\bf if }{score \(\geq\) beta }{\bf then }    	break\;
			}
\nlset{7.7}  		\lElse{\dots analogously \dots
 		}
 	}{
\nlset{7.9}addReasonCut(.); \Comment{add conflicting constraint (cf.\cite{zhang2002conflict})}
\nlset{7.10}	 extract implied variable and target decision level $target\_d$\;
\nlset{7.11}    \For{$k:=d$  \textbf{down to} target\_d}{ 
\nlset{7.12} 	set $\textnormal{level\_finished($k$)} := true$\;
 	}
\nlset{7.12}	\lIf{$x_v$ is existential variable} {\textbf{return} score}
\nlset{7.13} \lElse{		 \textbf{return}  \(-\infty\)}
}
}
}
\hspace*{-2.4\fboxsep}
\nlset{8} unassign($x_v$)\;

\nlset{9} \eIf{$v$ is existential node}
{
\nlset{9.1} \lIf{value \(>\) score} {\nosemic score := value; \Comment*[h]{MAX node}}
\nlset{9.2} \lIf{score \(\geq\) beta}{\dosemic
  \Return score}
} { 
{
\nlset{9.3} \lIf{value \(<\) score}{\nosemic
 score := value; \Comment*[h]{MIN node}}
\nlset{9.4} \lIf{score \(\leq\) alpha}{\dosemic
\Return score}
} 
}
 }
\hlyB{
\nlset{10} \label{Line::Extended10}\If{score $== + \infty$ {\bf{and}} $A^\exists \pmb{x} \leq \pmb{b}^\exists$ has no solution}{
\nlset{10.1}\label{Line::ExistInfeasible}\Return $- \infty$}
 
\nlset{10.2}\lElse{\Return score\label{Line::AllInfeasible}}
}

\caption{Extended algorithm of the Yasol solver for \QmIPPlus\\ yAlphabetaExtended(node $v$, depth $d$, double alpha, double beta)\label{Algo::ExtendedAlphaBeta}}
\end{algorithm}

To summarize the modifications in Algorithm \ref{Algo::ExtendedAlphaBeta}:
In Line \ref{Line::AllCheck} it is ensured that all universal variable assignments are legal. This either can be done by finding solution to the universal constraint system with respect to \Domain, or by being able to make conclusions about its feasibility by checking the constraints (cf. Section \ref{Sec::SimplyRestricted}). In the former case, also the IP-capabilities of the external solver are utilized (cf. Segments \ref{Line::Extended1} and  \ref{Line::Extended10} of Algorithm \ref{Algo::ExtendedAlphaBeta}). Consequently, any node for which both $A^\exists \pmb{x} \leq \pmb{b}^\exists$ and $A^\forall \pmb{x} \leq \pmb{b}^\forall$ have no solution, was reached via an illegal existential variable assignment. Hence, in any position where $A^\exists \pmb{x} \leq \pmb{b}^\exists$ has no solution this node immediately can be marked with $-\infty$, which is used in Lines \ref{Line::LeafCheckE} and \ref{Line::ExistInfeasible}. In particular Line \ref{Line::AllInfeasible} only returns $+\infty$, if the existential constraint system is still satisfiable.

\begin{example}
We consider a \QmIPPlus with four binary variables in the following setting:
\begin{center}
\footnotesize
\begin{tabular}{c|c|c|c}
$A^\exists \pmb{x} \leq \pmb{b}^\exists$&$A^\forall \pmb{x} \leq \pmb{b}^\forall$& $\pmb{c} \pmb{x}$ & \textnormal{Order}\\\midrule
 $-x_1+x_2+x_3 \geq 0$   &  $-x_1+x_2+x_3 \geq 0$&$2x_1+x_2-2x_3-2x_4$&$\exists x_1 \forall x_2\exists x_3 \forall x_4 $\\
$x_2=x_3$ &$x_1+x_2\leq1$&&\\
$x_1+x_2+x_3 \leq 2$& $x_1-x_2-x_3\geq -1$&&\\
\end{tabular}
\end{center}
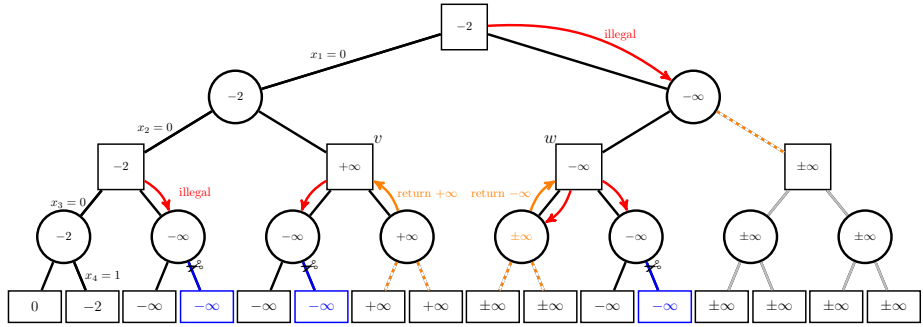
\begin{figure}[ht!]
\resizebox{\textwidth}{!}{
\begin{tikzpicture}[-,>=stealth', line width = 2pt,level 1/.style={sibling distance=13cm},
level 2/.style={sibling distance=6.5cm}, 
level 3/.style={sibling distance=3.25cm}, 
level 4/.style={sibling distance=1.635cm},
level distance=2cm] 
\node [DecisionNodeMax](Top) {$-2$}
	 	child{ 
	 		node [DecisionNodeMin] (A1_1){$-2$}
	 		child{ node [DecisionNodeMax] (E2_1){$-2$}
                	child{ node [DecisionNodeMin] (A2_1) {$-2$}
                	child{ node [Leaf] (L1) {$0$}}
                	child{ node [Leaf] (L2) {$-2$}}
                	}
                	child{ node [DecisionNodeMin] (A2_2) {$-\infty$}
                	child{ node [Leaf] (L3) {$-\infty$}}
                	child{ node [Leaf, color=blue] (L4) {$-\infty$}}
                	}
          	} 
          	child{ node [DecisionNodeMax] (E2_2){$+\infty$}
                	child{ node [DecisionNodeMin] (A2_3) {$-\infty$}
                	child{ node [Leaf] (L5) {$-\infty$}}
                	child{ node [Leaf, color=blue] (L6) {$-\infty$}}
                	}
                	child{ node [DecisionNodeMin] (A2_4) {$+\infty$}
                	child{ node [Leaf] (L7) {$+\infty$}}
                	child{ node [Leaf] (L8) {$+\infty$}}
                	}
          	} 
	 	}
	 	child{ 
	 		node [DecisionNodeMin](Right)(A1_2){$-\infty$}
	 		child{ node [DecisionNodeMax] (E2_3){$-\infty$}
                	child{ node [DecisionNodeMin] (A2_5) {\textcolor{orange}{$\pm\infty$}}
                	child{ node [Leaf] (L9) {$\pm\infty$}}
                	child{ node [Leaf] (L10) {$\pm\infty$}}
                	}
                	child{ node [DecisionNodeMin] (A2_6) {$-\infty$}
                	child{ node [Leaf] (L11) {$-\infty$}}
                	child{ node [Leaf, color=blue] (L12) {$-\infty$}}
                	}
          	} 
          	child{ node [DecisionNodeMax] (E2_4){$\pm \infty$}
                	child{ node [DecisionNodeMin] (A2_7) {$\pm\infty$}
                	child{ node [Leaf] (L13) {$\pm\infty$}}
                	child{ node [Leaf] (L14) {$\pm\infty$}}
                	}
                	child{ node [DecisionNodeMin] (A2_8) {$\pm\infty$}
                	child{ node [Leaf] (L15) {$\pm\infty$}}
                	child{ node [Leaf] (L16) {$\pm\infty$}}
                	}
          	} 
	 	}
; 
\node[anchor=west] at  (, -9.1) {};
\node[xshift=.8cm,yshift=.8cm] (v) at (E2_2) {\Large$v$};
\node[xshift=-.8cm,yshift=.8cm] (w) at (E2_3) {\Large$w$};
\path
(Top) edge [color=black] node[left,yshift=.2cm] {$x_1=0$}(A1_1)
(A1_1) edge [color=black] node[left,yshift=.1cm] {$x_2=0$} (E2_1)
(E2_1) edge [color=black]node[left] {$x_3=0$} (A2_1)
(A2_1) edge [color=black]node[right] {$x_4=1$} (L2)
(A2_2) edge [color=blue] node[yshift=8pt](Cut1){\color{black}\rotatebox{-160}{\Large \ding{34}}} (L4)
(A2_3) edge [color=blue] node[yshift=8pt](Cut2){\color{black}\rotatebox{-160}{\Large \ding{34}}} (L6)
(A2_6) edge [color=blue] node[yshift=8pt](Cut3){\color{black}\rotatebox{-160}{\Large \ding{34}}} (L12)
(A2_4) edge [color=white] (L7)
(A2_4) edge [color=white] (L8)
(A2_5) edge [color=white] (L9)
(A2_5) edge [color=white] (L10)
(A1_2) edge [color=white] (E2_4)
(A2_4) edge [color=orange, dashed] (L7)
(A2_4) edge [color=orange, dashed] (L8)
(A2_5) edge [color=orange, dashed] (L9)
(A2_5) edge [color=orange, dashed] (L10)
(A1_2) edge [color=orange, dashed] (E2_4)
(E2_4) edge [color=lightgray] (A2_7)
(E2_4) edge [color=lightgray] (A2_8)
(A2_7) edge [color=lightgray] (L13)
(A2_7) edge [color=lightgray] (L14)
(A2_8) edge [color=lightgray] (L15)
(A2_8) edge [color=lightgray] (L16)
(E2_2) edge [bend right=20, color=red,->](A2_3)
(E2_1) edge [bend left=20, color=red,->]  node[xshift=1cm]{illegal}(A2_2)
(E2_3) edge [bend left=20, color=red,->] (A2_5)
(A2_5) edge [bend left=20, color=orange,->] node[xshift=-1.1cm]{return $-\infty$} (E2_3)
(A2_4) edge [bend right=20, color=orange,->] node[xshift=1.1cm]{return $+\infty$} (E2_2)
(E2_3) edge [bend left=20, color=red,->]  (A2_6)
(Top) edge [bend left=20, color=red,->]  node[xshift=1cm]{illegal}(A1_2);
x
\end{tikzpicture}
}
\caption{Game tree representing the example instance. 
Blue subtrees are not visited due to an \Alphabeta cut. Orange dashed lines indicated illegal universal variable assignments that will not be traversed when using Algorithm \ref{Algo::ExtendedAlphaBeta}.\label{Fig::ExampleTree}}
\end{figure}

In Fig. \ref{Fig::ExampleTree} the game tree is given.
First take a look at the leaves, where the extended minimax values are given. The symbol $\pm\infty$ represents the situation where both constraint systems are violated. Using the definition of a legal move, however, makes it possible to unambiguously detect who made the first illegal move leading to this. In order to understand how Algorithm \ref{Algo::ExtendedAlphaBeta} traverses the tree we assume for the sake of simplicity that the search order is always from left to right, i.e. for each variable we first follow the path that represents setting the variable to zero. Furthermore, we assume that no propagation and backward implication is performed. Blue edges indicate \Alphabeta cuts. Red arrows indicate illegal existential variable assignments. Note that these might still be executed during the search but will always return the correct value of $-\infty$. 
 Assume the search already traversed the left-hand side subtree, and therefore already updated the score value at the root to $-2$ and now investigates the subtree corresponding $x_1=1$. Note that this is an illegal existential variable, as after this assignment no solution for $A^\exists \pmb{x} \leq \pmb{b}^\exists$ exists. This can also be seen when looking at the leaves of this subtree: All of them are either $-\infty$ or $\pm\infty$ indicating a violated existential constraint system. Nevertheless, we assume that this violation cannot be detected by simply looking at the constraints or the LP-relaxation. In fact, the LP-relaxation of the existential constraint system is still feasible with $x_2=x_3=0.5$. After assigning $x_1=1$ the left-hand side subtree corresponding to $x_2=0$ is traversed. Note that this assignment is a legal universal variable assignment. Setting $x_3=0$ yields a situation where the universal variable $x_4$ can no longer be assigned in a legal manner. However, this must have been done in an illegal manner, as also the existential constraint system is violated. The extended minimax value of the reached node is $\pm \infty$, as from this node itself it cannot be extracted whether first an existential or a universal variable was assigned illegaly. However, with Line \ref{Line::AllCheck} of Algorithm \ref{Algo::ExtendedAlphaBeta} we know that the universal variables were all assigned in a legal manner. Therefore, some previous existential variable must have been assigned in an illegal manner (in fact both $x_1$ and $x_3$). Consequently, by returning the value $-\infty$ this knowledge is anticipated an yields the correct value at the root.

\end{example}
\section{Further Details 
and Enhancements\label{Sec::Enhancements}}
This subsection deals with further details as well as accelerating heuristics for the presented \Alphabeta algorithm, starting with the crucial aspect of finding exponentially large solutions.

\subsection{Rapid Strategy Detection: Strategic Copy-Pruning\label{Sec::SCP}}

 An optimization task is often split up into two parts: finding the optimal solution itself and proving that no better solution can exist. For the latter, the already discussed backjumping and conflict learning techniques help to assess that no (better) strategy can exist in certain subtrees. For the first task, however, it seems that the exponential number of leaves belonging to a strategy must be traversed explicitly, which is certainly true in the worst case. For game tree search  there are several algorithms trying to rapidly show the existence of winning strategies such as Kawano's simulation \cite{Kawano},
sss* \cite{Stockman},
 MTD(f) \cite{Plaat} and (nega)scout \cite{Reinefeld}. They, however, always have to traverse an exponential number of leaves. 
 
 But in practical game trees,  typically there are ``difficult'' parts where a very deliberated substrategy must be found but also other parts where a less sophisticated substrategy suffices.  Based on the proceedings \cite{hartisch2019novel} and the dissertation \cite{DissMichael}, we present the \textit{strategic copy-pruning} (SCP) procedure that is capable of recognizing such subtrees making it possible to \textit{implicitly} deduce the existence of a winning strategy therein. SCP draws its power not from memory-intensive learning, but from deep findings in the search tree. In contrast to related ideas in QBF, as \EG counterexample guided abstraction refinement \cite{JANOTA20161} and solution directed backjumping \cite{Qube} the fulfillment of linear constraints, rather than SAT clauses, must be ensured. Moreover, we consider an optimization process over a minimax objective rather than guaranteeing the mere satisfiability. For solving quantified constraint satisfaction problems a technique called solution directed pruning is used \cite{gent2008solving}, which has similarities to our proposed technique, but is also not applicable for an optimization problem.  

We first explain the basic idea of SCP using the example displayed in Figure \ref{Fig::SCP_Explain}. 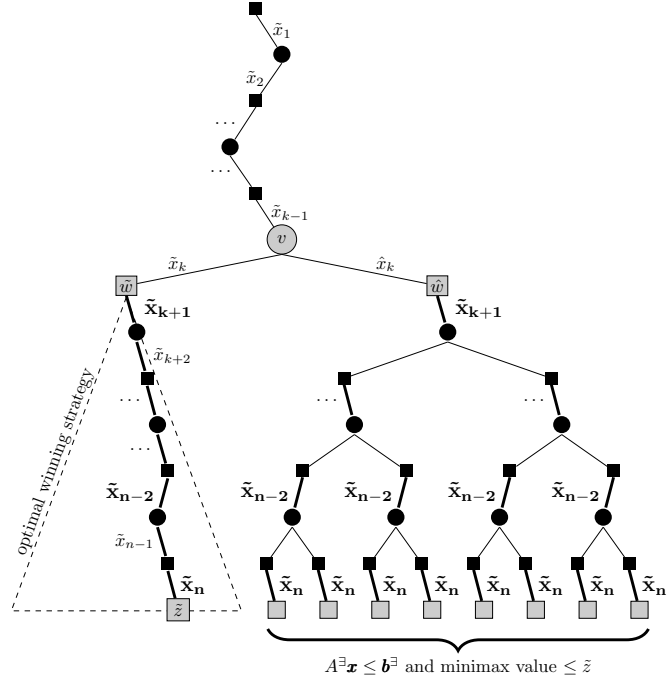
\begin{figure}[ht]
\centering
\tikzset{
mySub/.style={
  draw,dashed,shape border uses incircle,
  isosceles triangle,shape border rotate=90,isosceles triangle apex angle=40,minimum height=6.1cm},
}

\resizebox{!}{9cm}{

\begin{tikzpicture}[sibling distance=5cm, level 2/.style={sibling distance =2cm}, level distance=.9cm, parent anchor=south,level 8/.style={sibling distance=4cm},level 9/.style={sibling distance=2cm},level 11/.style={sibling distance=1cm} ]
\node[rectangle, fill = black] {}
    child{ node[circle, fill = black,xshift=.5cm] {}       
   		child{ node[rectangle, fill = black,xshift=-.5cm] {} 
     		child{ node[circle, fill = black,xshift=-.5cm] {}   
      			child{ node[rectangle, fill = black,xshift=.5cm] {}    
      				child{ node[circle, fill=black!20!white,draw= black,xshift=.5cm] {$v$}  
        				child{ node[inner sep = 2pt, rectangle, draw=black,fill=black!20!white,xshift=-2cm] {$\tilde{w}$}       
        					 child{ node[below,yshift=.7cm,mySub,xshift=1cm] {} node[rotate=70, xshift=-2.5cm, yshift=-.5cm] {optimal winning strategy} }
        						child[draw=greenForTree,ultra thick]{node[ circle, fill = black,xshift=-.8cm] {}   
        							child[draw=black]{ node[rectangle, fill = black,xshift=.2cm] {} 
        								child[draw=greenForTree,ultra thick]{ node[circle, fill = black,xshift=.2cm] {}    
        									child[draw=black]{ node[rectangle, fill = black,xshift=.2cm] {}  
        										child[draw=greenForTree,ultra thick]{ node[circle, fill = black,xshift=-.2cm] {}
        											child[draw=black]{ node[rectangle, fill = black,xshift=.2cm] {}             		
        												child[draw=greenForTree,ultra thick]{ node[thin,regular polygon,regular polygon sides=4, inner sep = 1pt, fill=black!20!white,xshift=.2cm,draw=black] {$\tilde{z}$}             			
        					 							edge from parent node[right,color=greenForTree,scale=1.2]{$\mathbf{ \tilde{x}_{n}}$}}						  
        					 						edge from parent node[left]{$\tilde{x}_{n-1}$}}             					  
        					  					edge from parent node[left,color=greenForTree,scale=1.2]{$\mathbf{ \tilde{x}_{n-2}}$}}          					  
        					  				edge from parent node[left]{$\dots$}}         					  
        					  			edge from parent node[left]{$\dots$}}            					  
        					  		edge from parent node[right,xshift=.1cm]{$\tilde{x}_{k+2}$}}          					  
        					  edge from parent node[right,xshift=.1cm,color=greenForTree,scale=1.2]{$\mathbf{ \tilde{x}_{k+1}}$}}
              			edge from parent node[left,xshift=-.3cm,yshift=.1cm]{$\tilde{x}_{k}$}}
              			child{ node[inner sep = 2pt, rectangle, draw=black,fill=black!20!white,xshift=2cm] {$\hat{w}$}    
              				child[draw=greenForTree,ultra thick]{ node[circle, fill = black,xshift=.2cm] {}   
              						child[draw=black,thin]{ node[rectangle, fill = black] {} 
        								child[draw=greenForTree,ultra thick]{ node[circle, fill = black,xshift=.2cm] {}   
        									child[draw=black,thin]{ node[rectangle, fill = black] {}  
        										child[draw=greenForTree,ultra thick]{ node[circle, fill = black,xshift=-.2cm] {}
        											child[draw=black,thin]{ node[rectangle, fill = black] {}             		
        												child[draw=greenForTree,ultra thick]{ node[thin,regular polygon,regular polygon sides=4, fill=black!20!white,xshift=.2cm,draw=black] (A){}             			
        					 							edge from parent node[right,color=greenForTree,scale=1.2]{$\mathbf{ \tilde{x}_{n}}$}}						  
        					 						edge from parent node[left]{}} 
        											child[draw=black,thin]{ node[rectangle, fill = black] {}             		
        												child[draw=greenForTree,ultra thick]{ node[thin,regular polygon,regular polygon sides=4, fill=black!20!white,xshift=.2cm,draw=black] {}             			
        					 							edge from parent node[right,color=greenForTree,scale=1.2]{$\mathbf{ \tilde{x}_{n}}$}}						  
        					 						edge from parent node[left]{}}             					  
        					  					edge from parent node[left,color=greenForTree,scale=1.2]{$\mathbf{ \tilde{x}_{n-2}}$}}           					  
        					  				edge from parent node[left]{}}     
        									child[draw=black,thin]{ node[rectangle, fill = black] {}  
        										child[draw=greenForTree,ultra thick]{ node[circle, fill = black,xshift=-.2cm] {}
        											child[draw=black,thin]{ node[rectangle, fill = black] {}             		
        												child[draw=greenForTree,ultra thick]{ node[thin,regular polygon,regular polygon sides=4, fill=black!20!white,xshift=.2cm,draw=black] {}             			
        					 							edge from parent node[right,color=greenForTree,scale=1.2]{$\mathbf{ \tilde{x}_{n}}$}}						  
        					 						edge from parent node[left]{}} 
        											child[draw=black,thin]{ node[rectangle, fill = black] {}             		
        												child[draw=greenForTree,ultra thick]{ node[thin,regular polygon,regular polygon sides=4, fill=black!20!white,xshift=.2cm,draw=black] {}             			
        					 							edge from parent node[right,color=greenForTree,scale=1.2]{$\mathbf{ \tilde{x}_{n}}$}}						  
        					 						edge from parent node[left]{}}             					  
        					  					edge from parent node[left,color=greenForTree,scale=1.2]{$\mathbf{ \tilde{x}_{n-2}}$}}           					  
        					  				edge from parent node[left]{}}         					  
        					  			edge from parent node[left]{$\dots$}}            					  
        					  		edge from parent node[right,xshift=.1cm]{}}  
        							child[draw=black,thin]{ node[rectangle, fill = black] {} 
        								child[draw=greenForTree,ultra thick]{ node[circle, fill = black,xshift=.2cm] {}   
        									child[draw=black,thin]{ node[rectangle, fill = black] {}  
        										child[draw=greenForTree,ultra thick]{ node[circle, fill = black,xshift=-.2cm] {}
        											child[draw=black,thin]{ node[rectangle, fill = black] {}             		
        												child[draw=greenForTree,ultra thick]{ node[thin,regular polygon,regular polygon sides=4, fill=black!20!white,xshift=.2cm,draw=black] {}             			
        					 							edge from parent node[right,color=greenForTree,scale=1.2]{$\mathbf{ \tilde{x}_{n}}$}}						  
        					 						edge from parent node[left]{}} 
        											child[draw=black,thin]{ node[rectangle, fill = black] {}             		
        												child[draw=greenForTree,ultra thick]{ node[thin,regular polygon,regular polygon sides=4, fill=black!20!white,xshift=.2cm,draw=black] {}             			
        					 							edge from parent node[right,color=greenForTree,scale=1.2]{$\mathbf{ \tilde{x}_{n}}$}}						  
        					 						edge from parent node[left]{}}             					  
        					  					edge from parent node[left,color=greenForTree,scale=1.2]{$\mathbf{ \tilde{x}_{n-2}}$}}           					  
        					  				edge from parent node[left]{}}     
        									child[draw=black,thin]{ node[rectangle, fill = black] {}  
        										child[draw=greenForTree,ultra thick]{ node[circle, fill = black,xshift=-.2cm] {}
        											child[draw=black,thin]{ node[rectangle, fill = black] {}             		
        												child[draw=greenForTree,ultra thick]{ node[thin,regular polygon,regular polygon sides=4, fill=black!20!white,xshift=.2cm,draw=black] {}             			
        					 							edge from parent node[right,color=greenForTree,scale=1.2]{$\mathbf{ \tilde{x}_{n}}$}}						  
        					 						edge from parent node[left]{}} 
        											child[draw=black,thin]{ node[rectangle, fill = black] {}             		
        												child[draw=greenForTree,ultra thick]{ node[thin,regular polygon,regular polygon sides=4, fill=black!20!white,xshift=.2cm,draw=black] (B){}             			
        					 							edge from parent node[right,color=greenForTree,scale=1.2]{$\mathbf{ \tilde{x}_{n}}$}}				  
        					 						edge from parent node[left]{}}             					  
        					  					edge from parent node[left,color=greenForTree,scale=1.2]{$\mathbf{ \tilde{x}_{n-2}}$}}           					  
        					  				edge from parent node[left]{}}         					  
        					  			edge from parent node[left]{$\dots$}}            					  
        					  		edge from parent node[right,xshift=.1cm]{}}          					  
        					  edge from parent node[right,xshift=.1cm,color=greenForTree,scale=1.2]{$\mathbf{ \tilde{x}_{k+1}}$}}
              			edge from parent node[right,xshift=.3cm,yshift=.1cm]{$\hat{x}_{k}$}} 
            		edge from parent node[right]{$\tilde{x}_{k-1}$}} 
          		edge from parent node[left]{$\dots$}} 
        	edge from parent node[left]{$\dots$}}   
  		edge from parent node[left]{$\tilde x_2$}}
	edge from parent node[right]{$\tilde x_1$}};
\begin{scope}[every coordinate/.style={yshift=-.2cm}]
\draw[ultra thick,decorate,decoration={brace,mirror,amplitude=12pt}] ([c]A.south west) -- ([c]B.south east) node[midway, below,yshift=-.4cm]{$A^\exists \pmb{x}\leq \pmb{b}^\exists$ and minimax value  $\leq \tilde{z}$};
\end{scope}
\end{tikzpicture}
}
\caption[Schematic display of SCP in a binary game tree]{Schematic display of SCP in a binary game tree.}\label{Fig::SCP_Explain}
\end{figure}
 Consider a binary QIP and let the tree search have reached universal decision node $v \in V_\forall$ corresponding to the variable assignment $\tilde{x}_1,\ldots,\tilde{x}_{k-1}$. Keep in mind that this is a MIN node as the universal player is trying to minimize the objective. Assume the search first evaluates the left subtree below $\tilde{w} \in V_\exists$ corresponding to setting $x_k = \tilde{x}_k$ and assume the optimal solution is found for this subtree with principal variation $\tilde{x}_k, \ldots, \tilde{x}_n$. In particular, $minimax(\tilde{w}) = c^\top \tilde{x} = \tilde{z}$.  The existential variable assignments of this principal variation are stored, as they are the most effective in this subtree. Next up, in order to completely evaluate $v$, the existence of a winning strategy in the subtree below $\hat{w}$, corresponding to setting $x_k=\hat{x}_k=1-\tilde{x}_k$, must be ensured. Before searching the subtree explicitly we boldly attempt whether the strategy arising by adopting the existential assignments from the principal variation below $\tilde{w}$ is a winning strategy. If $A^\exists \pmb{x} \leq \pmb{b}^\exists$ for each leaf of this strategy, obviously a winning strategy for $\tilde{w}$ and thus for $v$  is found. If further for each leaf the payoff is larger than or equal to $\tilde{z}$ we even do not have to investigate this subtree any further: a winning strategy has been found below $\hat{w}$ with value larger than or equal to $\tilde{z}$. Hence, $minimax(\hat{w}) \geq \tilde{z}$, because there might exist even a better (existential) strategy. But since $minimax(v)=\min(minimax(\tilde{w}), minimax(\hat{w}))$ no further search is required to validate $minimax(v)=\tilde{z}$. To simplify notation 
 let $\mathcal{I}_\exists=\{j \in [n] \mid Q_j =\exists\}$ and $\mathcal{I}_\forall =[n] \setminus \mathcal{I}_\exists$ be the index sets of existential and universal variables, respectively.    

 \begin{theorem}[Strategic Copy-Pruning (SCP)\label{Theo_CopyPaste}]~\\
 Assume a binary QIP and consider the universal variable $x_k$, $k\in \mathcal{I}_\forall$. Let $(\tilde{x}_1,\ldots,\tilde{x}_{k-1}) \in \{0,1\}^{k-1}$ be a fixed variable assignment of the variables $x_1, \ldots, x_{k-1}$. Let $v\in V_\forall$ be the corresponding node in the game tree. Let $\tilde{w} \in V$ and $\hat{w} \in V$ be the two children of $v$ corresponding to the variable assignment $\tilde{x}_k$ and $\hat{x}_k=1-\tilde{x}_k$ of the universal variable $x_k$, respectively. Let there be an optimal winning strategy for the subtree below $\tilde{w}$ with minimax value $minimax(\tilde{w})=\tilde{z}$ defined by the variable assignment $\tilde{\pmb{x}}=(\tilde{x}_1,\ldots,\tilde{x}_{n}) \in \{0,1\}^{n}$, \IE   $\tilde{z}=\pmb{c}\tilde{\pmb{x}}$. If the minimax value of the copied strategy for the subtree below $\hat{w}$---obtained by adoption of future existential variable assignments as in $\tilde{\pmb{x}}$---is not smaller than $\tilde{z}$ and if this copied strategy constitutes a winning strategy, then $minimax(v)=\tilde{z}$. Formally: If both 
 \begin{equation}\label{ZFBedingung2}
c_k(\hat{x}_k - \tilde{x}_k)+ \sum_{\substack{j\in\mathcal{I}_\forall:  \\ j>k\, \wedge \, c_j \leq 0} }  c_j(1-\tilde x_j) - \sum_{\substack{j\in\mathcal{I}_\forall:  \\ j>k\, \wedge\, c_j > 0} } c_j \tilde x_j \ \geq\  0
\end{equation}
and
\begin{equation}\label{CheckAllConstraints}
\hskip0.07\textwidth\sum\limits_{\substack{j \in [n]: \\ j \in \mathcal{I}_\exists\, \vee \, j<k}}A^\exists_{i,j} \tilde{x}_j + A^\exists_{i,k}\hat x_k +\sum\limits_{\substack{ j \in \mathcal{I}_\forall:\\ j>k\, \wedge\, A^\exists_{i,j}>0}} A^\exists_{i,j} \leq b^\exists_i \qquad\qquad \forall i\in \{1,\ldots,m_\exists\}\hskip0.07\textwidth
\end{equation}
then $minimax(v)=\tilde{z}$. 

\end{theorem}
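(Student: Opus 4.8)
The plan is to exploit the minimizing structure at the universal node $v$ and to read the two hypotheses \eqref{ZFBedingung2} and \eqref{CheckAllConstraints} as exact encodings of the universal player's worst-case behaviour. Since $v \in V_\forall$ is a MIN node and $x_k$ is binary with both values always legal in a basic QIP (the domain of a universal variable is unrestricted there), its extended minimax value splits as $minimax(v) = \min\{minimax(\tilde{w}), minimax(\hat{w})\}$. Because $minimax(\tilde{w}) = \tilde{z}$ is given, the entire statement collapses to the single inequality $minimax(\hat{w}) \geq \tilde{z}$. To obtain this lower bound I exhibit one concrete existential strategy $\hat{S}$ below $\hat{w}$, the \emph{copied strategy}, which fixes every future existential variable $x_j$ (for $j>k$, $j\in\mathcal{I}_\exists$) to the value $\tilde{x}_j$ from $\tilde{\pmb{x}}$, independently of the universal moves. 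Since the extended minimax value of a finite game tree equals the value the existential player can guarantee (determinacy), $minimax(\hat{w})$ is bounded below by the worst-case payoff of $\hat{S}$, so it suffices to show that $\hat{S}$ is winning and has worst-case payoff at least $\tilde{z}$.

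For the winning property I read off the universal player's most adversarial response to each existential constraint $i$: to maximize the left-hand side she sets a free future universal variable $x_j$ to $1$ exactly when $A^\exists_{i,j}>0$ and to $0$ otherwise. Substituting these worst-case values together with the fixed prefix values $\tilde{x}_j$, the flipped entry $\hat{x}_k$ in position $k$, and the copied existential values $\tilde{x}_j$, reproduces precisely the left-hand side of \eqref{CheckAllConstraints}. Hence \eqref{CheckAllConstraints} states exactly that even the worst admissible completion satisfies every constraint, so all leaves reached by $\hat{S}$ are feasible; this also certifies that every copied existential move is legal, since each partial node extends to a feasible leaf.

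For the payoff bound I identify the payoff-minimizing universal response: a free future universal variable $x_j$ is set to $1$ when $c_j\leq 0$ and to $0$ when $c_j>0$. Evaluating $\pmb{c}\pmb{x}$ at this assignment gives the worst-case payoff $W$ of $\hat{S}$, and a direct comparison against $\tilde{z}=\pmb{c}\tilde{\pmb{x}}$ shows that $W-\tilde{z}$ equals, term by term, the left-hand side of \eqref{ZFBedingung2} once the future-universal sum is split into its $c_j\leq 0$ and $c_j>0$ parts. Thus \eqref{ZFBedingung2} is exactly the assertion $W\geq\tilde{z}$. Combining the two parts yields $minimax(\hat{w})\geq W\geq\tilde{z}$, and inserting this into the minimum at $v$ gives $minimax(v)=\min\{\tilde{z},minimax(\hat{w})\}=\tilde{z}$.

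The main obstacle is the bookkeeping in the two identities: one must carefully match the index sets in \eqref{ZFBedingung2} and \eqref{CheckAllConstraints} with the correct adversarial choice for each category of variable---the already-fixed prefix, the flipped variable $x_k$, the copied future existential variables, and the free future universal variables---and verify that the objective difference $W-\tilde{z}$ decomposes into exactly the $c_j\leq 0$ and $c_j>0$ sums appearing in \eqref{ZFBedingung2}. A secondary point worth stating explicitly is the legitimacy of lower-bounding $minimax(\hat{w})$ by a single, possibly suboptimal, memoryless existential strategy, which rests on the determinacy of the finite game tree and does not depend on whether $\hat{w}$ is a MAX or a MIN node.
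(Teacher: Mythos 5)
Your proposal is correct and follows essentially the same route as the paper's proof: reduce to showing $minimax(\hat{w})\geq\tilde{z}$ at the MIN node $v$, exhibit the copied (memoryless) existential strategy, use Condition \eqref{CheckAllConstraints} with the per-constraint worst-case universal completion to certify feasibility of every leaf, and use Condition \eqref{ZFBedingung2} with the payoff-minimizing universal completion ($x_j=1$ iff $c_j\leq 0$) to bound the worst-case payoff from below. Your version merely spells out the bookkeeping and the determinacy justification more explicitly than the paper does; no substantive difference.
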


For clarification note that Condition \eqref{ZFBedingung2} ensures that the change in the minimax value of the copied strategy, resulting from flipping $x_k$ and using the worst-case assignment of the remaining future universal variables, is non-negative, \IE that its minimax value is still greater than or equal to $\tilde{z}$. Condition (\ref{CheckAllConstraints}) verifies that every constraint is satisfied in each leaf of the copied strategy by ensuring the fulfillment of each constraint in its specific worst-case scenario.

\begin{proof}
If (\ref{CheckAllConstraints}) is satisfied there automatically exists a winning strategy for the subtree of $v$ corresponding to $x_k=\hat{x}_k$ with root node $\hat{w}$, since for any future universal variable assignment the assignment of upcoming existential variables as in  $\tilde{x}$ fulfills the existential constraint system. This is ensured, since for each constraint the worst-case setting of the future universal variables is chosen. 
Furthermore, the minimax value $\hat{z}$ of this strategy is larger than or equal to $\tilde{z}$  due to Condition (\ref{ZFBedingung2}):
$$
\hat{z} \ = \ \sum\limits_{\substack{j \in [n]: \\ j \in \mathcal{I}_\exists \, \vee j<k}} c_j\tilde{x}_j +c_k \hat{x}_k + \sum\limits_{\substack{j\in\mathcal{I}_\forall:\\ \ j>k \, \wedge\, c_j \leq 0} } c_j\
\stackrel{ (\ref{ZFBedingung2})}{\leq} \ \sum\limits_{\substack{j \in [n]: \\ j \in \mathcal{I}_\exists \, \vee \, j<k}} c_j\tilde{x}_j +c_k \tilde{x}_k + \sum\limits_{\substack{j\in \mathcal{I}_\forall:\\ j>k}}  c_j \tilde{x}_j \ = \ \tilde{z}
$$
Hence, if \eqref{ZFBedingung2} and \eqref{CheckAllConstraints} are fulfilled there exists a winning strategy in the subtree below $\hat{w}$ with value larger than or equal to $\tilde{z}$. 
Thus, the (still unknown) optimal solution for the subtree below $\hat{w}$ has a minimax value larger than or equal to $\tilde{z}$, \IE $minimax(\hat{w})\geq\hat{z} \geq \tilde{z}= minimax(\tilde{w})$, and therefore $minimax(v)=\tilde{z}$.
\end{proof}

Note that 
Condition (\ref{CheckAllConstraints}) is trivially fulfilled for any constraint $i\in \{1,\ldots,m_\exists\}$ with $A^\exists_{i,j}=0$ for all $j\in \mathcal{I}_\forall$, $j\geq k$, \IE constraints  that are not influenced by future universal variables do not need  to be examined. Hence, only a limited number of constraints need to be checked in case of a sparse matrix. 
 Furthermore, Condition \eqref{ZFBedingung2} is fulfilled if $c_j=0$ for all $j \in \mathcal{I}_\forall$, $j\geq k$, \IE if the  future universal variables have no direct effect on the objective value. If even $c=0$, \IE it is a satisfiabilty problem rather than an optimization problem, Condition (\ref{ZFBedingung2}) holds trivially.
 
 \begin{remark}
 Theorem \ref{Theo_CopyPaste} demands a binary QIP, but similar results can be obtained for QIP with integer variables as well as for QIP+ \cite{DissMichael}. Continuous existential variables in the final variable block also do not impede its applicability, as all existential variables are fixed as in $\tilde{\pmb{x}}$.
\end{remark}

\paragraph{SCP Implementation Details}\label{Para::SCPImplDet}
When implementing SCP the most crucial part is not the verification of Conditions \eqref{ZFBedingung2} and \eqref{CheckAllConstraints}, but ensuring the optimality of $\tilde{z}$ for $minimax(\tilde{w})$. Consider Figure \ref{Fig_Explanation} representing the final four variables of a  binary QIP with strictly alternating quantifiers.
 \begin{figure}[ht]
\centering
\scalebox{1}{
\begin{tikzpicture}[-,>=stealth',level 1/.style={sibling distance=2cm},
level 2/.style={sibling distance=2cm},
level 3/.style={sibling distance=2.5cm}, 
level 4/.style={sibling distance=1.5cm}, 
level 5/.style={sibling distance=1.7cm},
level distance=1cm] 

\node [opacity=1]() {}
child{ node [DecisionNodeMin_small,xshift=-1.5cm] {$A$}
	 	child{ node [Leaf_small] {$B$};\path edge from parent[dashed] node[left,yshift=.1cm,xshift=.1cm]{\small $x_{T-3}$ $=$ $\hat{x}_{T-3}$}
	 	}
      child{ node [Leaf_small] {$C$}
      		 		child{ node [DecisionNodeMin_small,xshift=.8cm] {$D$}
                		child{ node [Leaf_small] {$E$}
                			child{ node [DecisionNodeMax_small,xshift=.6cm] {$G$};\path edge from parent node[right,yshift=-.05cm,xshift=-1.45cm]{\small $x_{T}$ $=$ $\tilde{x}_{T}$}
                			};\path edge from parent node[left,yshift=.1cm,xshift=.1cm]{\small $x_{T-1}$ $=$ $\tilde{x}_{T-1}$}
                			}
                			child{ node [Leaf_small] {$F$};\path edge from parent[dashed] node[right,yshift=.1cm,xshift=-.1cm]{\small $x_{T-1}$ $=$ $\hat{x}_{T-1}$}
                			};\path edge from parent node[right,yshift=.1cm,xshift=-.1cm]{\small $x_{T-2}$ $=$ $\tilde{x}_{T-2}$}
                		};\path edge from parent node[right,yshift=.1cm,xshift=-.1cm]{\small $x_{T-3}$ $=$ $\tilde{x}_{T-3}$}
                	};\path edge from parent[dotted] node[left,yshift=.1cm,xshift=.1cm]{\small $x_{T-4}$ $=$ $\tilde{x}_{T-4}$}       
	}    		
; 
\node[anchor=west] at  (3, -1) {\small{MAX}};
\node[anchor=west] at  (3,-2) {\small{MIN}};
\node[anchor=west] at  (3,-3) {\small{MAX}};
\node[anchor=west] at  (3,-4) {\small{MIN}};
\end{tikzpicture}
}

\caption[Illustrative game tree for the use of SCP]{Illustrative game tree: dashed lines indicate that those underlying subtrees might be omitted if Theorem \ref{Theo_CopyPaste} applies.}\label{Fig_Explanation}
\end{figure}
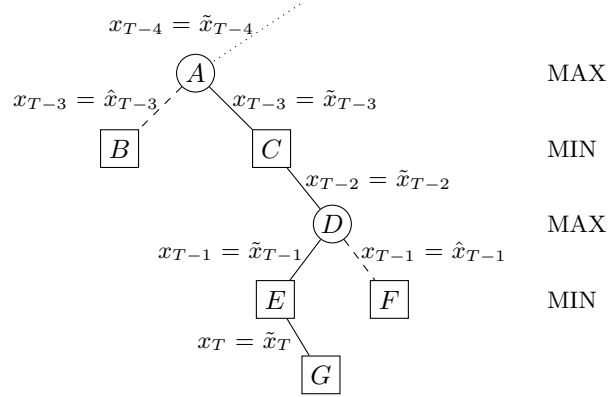
We assume the search has found the variable assignment $x=\tilde{x}$ (represented by leaf $G$) with $A^\exists\tilde{\pmb{x}} \leq \pmb{b}^\exists$. Furthermore, assume the existential variable assignment $x_{T}=\tilde{x}_{T}$ is the optimal assignment for the final variable block with regard to $x_1=\tilde{x}_1,\ldots, x_{T-1}=\tilde{x}_{T-1}$, \IE $minimax(E)=minimax(G)$. If the requirements of Theorem \ref{Theo_CopyPaste} for $k=T-1$ are fulfilled it is $minimax(D)=minimax(E)$ and we do not have to calculate $minimax(F)$ explicitly as the existence of a winning strategy below $F$ is ensured. If this attempt is successful the application of Theorem \ref{Theo_CopyPaste} at node $A$ would be attractive. However, one must ensure, that $minimax(C)=minimax(D)$, \IE that setting $x_{T-2}=\tilde{x}_{T-2}$ is indeed optimal in this stage. If this optimality cannot be guaranteed, but Conditions (\ref{ZFBedingung2}) and (\ref{CheckAllConstraints}) are fulfilled at node $A$, we still can conclude the existence of a winning strategy for the subtree at $B$ but we cannot yet specify $minimax(A)$. 
However, storing the information $minimax(B)\leq \hat{z}$ and $minimax(A)\leq \tilde{z}$ can be advantageous.
 In Algorithm \ref{Algo::SCP} such a node $A$ is marked as ``potentially finished'', as $minimax(C)=minimax(D)$ might not yet be ensured. If it turns out, that indeed setting $x_{T-2}=1-\tilde{x}_{T-2}$ is optimal at $C$ such a marking is deleted.

As soon as a leaf $v$ is found during the tree search with the corresponding variable assignment $x_v$ being a potentially new PV for this subtree the mechanism described in Algorithm \ref{Algo::SCP} is invoked: the two Conditions (\ref{ZFBedingung2}) and (\ref{CheckAllConstraints}) are checked at each universal node starting from this leaf towards the root (Line \ref{LineCheckSCP}).
\begin{algorithm}[ht]
\KwData{leaf node $v$}
  \nlset{1} checkSCP=\texttt{TRUE}\;
  \nlset{2}  \Repeat{$v$ is root node}{
  \nlset{3} $v$=parent($v$)\label{LineParent}\;
  \nlset{4} \If{$v \in V_\forall$}{
  \nlset{5}  	 	\If{checkSCP  \textbf{and} $v$ fulfills Conditions  (\ref{ZFBedingung2}) and (\ref{CheckAllConstraints})\label{LineCheckSCP}}{ 
  \nlset{6}  	 	mark $v$ as potentially finished}
  \nlset{7}   	 	\Else{
  \nlset{8}	  checkSCP=\texttt{FALSE}\;
  \nlset{9} mark $v$ as unfinished\label{LineUnmark}\;
    	 }
       }
    }
\caption{Marking of potentially finished universal nodes.}\label{Algo::SCP}
\end{algorithm}   While both conditions are fulfilled the corresponding universal nodes are marked as potentially finished. If one of the conditions is not satisfied the remaining universal nodes above are marked as unfinished. If a level is closed during the tree search and the above universal node is marked as potentially finished this level also can be closed immediately as a strategy is guaranteed in the other branch with worse objective value (from the universal player's point of view). The unmarking of universal nodes (Line \ref{LineUnmark}) is necessary since Theorem \ref{Theo_CopyPaste} demands $x_v$ to be the actual PV and hence previous markings were made based on a false assumption.
\begin{remark}
 When exploiting Theorem \ref{Theo_CopyPaste} via Algorithm \ref{Algo::SCP} it is especially advantageous if the search first investigates the PV as Condition \eqref{ZFBedingung2} is more likely to be fulfilled. Hence, its applicability highly depends on the implemented diving and sorting heuristic.
\end{remark}
\begin{prop}\label{Prop::SCPOnlyXk}
Consider a QIP and let $v \in V_\forall$ be a node representing the decision on universal variable $x_k$, $k \in \mathcal{I}_\forall$.
In order to evaluate line \ref{LineCheckSCP} in Algorithm \ref{Algo::SCP} for node $v$ $\mathcal{O}(m_k \cdot n)$ operations are sufficient, with $m_k=\vert\{ i \in \{1,\ldots,m_\exists \} \mid A^\exists_{i,k} \neq 0  \}\vert$ being the number of existential constraints in which variable $k$ is present.
\end{prop}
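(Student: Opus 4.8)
The plan is to split the evaluation of Line~\ref{LineCheckSCP} into the verification of the objective condition \eqref{ZFBedingung2} and of the feasibility condition \eqref{CheckAllConstraints}, and to bound each separately. Condition \eqref{ZFBedingung2} is a single scalar inequality whose left-hand side is a signed sum over the future universal indices $j \in \mathcal{I}_\forall$ with $j>k$; evaluating it from scratch costs $\mathcal{O}(n)$, and it can even be maintained in $\mathcal{O}(1)$ per universal node along the leaf-to-root walk of Algorithm~\ref{Algo::SCP}, since passing from the previously processed universal variable to $x_k$ only moves a single index between the ``current flip'' term and the worst-case sum. Hence this part is never the bottleneck.

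For condition \eqref{CheckAllConstraints} I would first invoke the remark following Theorem~\ref{Theo_CopyPaste}: every existential constraint $i$ with $A^\exists_{i,j}=0$ for all $j\in\mathcal{I}_\forall$, $j\geq k$, is satisfied automatically. Indeed, for such a constraint the left-hand side of \eqref{CheckAllConstraints} collapses to $\sum_{j\in[n]}A^\exists_{i,j}\tilde x_j$, which is at most $b^\exists_i$ because the principal variation $\tilde{\pmb{x}}$ satisfies $A^\exists\tilde{\pmb{x}}\leq\pmb{b}^\exists$ at its leaf. Thus only constraints still containing a future universal variable can possibly be violated and have to be inspected at all.

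The key step is then to show that, exploiting the incremental nature of Algorithm~\ref{Algo::SCP}, the set of constraints that actually must be (re-)inspected at node $v$ has size $\mathcal{O}(m_k)$. I would maintain, for every constraint $i$, a running value of the left-hand side of \eqref{CheckAllConstraints} along the current root path, in which each universal variable already passed contributes its worst-case term $\max\{A^\exists_{i,j},0\}$ and each not-yet-passed variable its fixed value $\tilde x_j$. Comparing the running value at $v$ (variable $x_k$) with the one at its child universal node shows that exactly two indices change role---the current variable $x_k$, moving from $\tilde x_k$ to the flipped value $\hat x_k$, and the previously processed universal variable, moving from flipped to worst-case---and both changes only alter the running value of constraints in which the respective variable occurs. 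Charging the update and recheck triggered by the retirement of $x_k$ to node $v$ itself, each node becomes responsible for precisely the $m_k$ constraints with $A^\exists_{i,k}\neq 0$; recomputing or rechecking each of them costs $\mathcal{O}(n)$, for a total of $\mathcal{O}(m_k\cdot n)$, which dominates the $\mathcal{O}(n)$ spent on \eqref{ZFBedingung2}.

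The main obstacle I anticipate is the bookkeeping in this last step: one must argue carefully that the only constraints whose feasibility status can change between a universal node and the next universal node above it are those touched by the two role-changing variables, and that the contribution of the previously processed universal variable (its switch from the flipped assignment $\hat x$ to the worst-case term $\max\{A^\exists_{i,\cdot},0\}$) can be attributed to that variable's own node rather than to $v$, so that no node is charged more than $\mathcal{O}(m_k)$ constraint updates. Once this attribution is fixed, verifying that the running value equals the left-hand side of \eqref{CheckAllConstraints} at each universal node is a routine induction along the root path.
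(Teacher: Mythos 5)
Your proof is correct and follows essentially the same route as the paper's: split the evaluation into the two conditions, handle \eqref{ZFBedingung2} with stored partial sums in $\mathcal{O}(1)$ per node, dismiss constraints without future universal variables via the feasibility of the principal variation, and restrict the remaining work to the $m_k$ constraints containing $x_k$ because all others are covered by checks performed at deeper universal nodes in earlier loop iterations. Your running-value induction in fact makes explicit the justification that the paper compresses into the single sentence that such constraints ``have been checked in a previous loop''; just make sure the retirement update of the previously processed universal variable is performed and charged at that variable's own node, so that the stated bound holds per node rather than merely amortized over the leaf-to-root walk.
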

\begin{proof}
The plain computation of the left-hand side of Condition \eqref{ZFBedingung2} requires $\mathcal{O}(n)$ operations. However, the two sums in Condition \eqref{ZFBedingung2} can be updated and stored for each loop and hence it suffices to carry out $\mathcal{O}(1)$ operations: check the increase in the objective if $x_k=\hat{x}_k$ by computing $c_k(\hat{x}_k-\tilde{x}_k)$ and adding the  increase  from choosing the worst-case setting of future universal variables computed in the previous loops. If the resulting increase is non-positive  Condition \eqref{ZFBedingung2} is fulfilled and we can update these sums by incorporating $x_k$ accordingly. 
For Condition \eqref{CheckAllConstraints} to be fulfilled the fulfillment of each constraint in the worst-case setting of future universal variables must be ensured. In the setting of Algorithm \ref{Algo::SCP} it suffices to check only constraints in which variable $x_k$ is present, \IE those constraints $i \in \{1,\ldots,m_\exists\}$ with $A^\exists_{i,k}\neq 0$: Constraints with $A^\exists_{i,j}=0$ for each $j \in \mathcal{I}_\forall$, $j\geq k$, are trivially fulfilled, since $A^\exists\tilde{\pmb{x}} \leq \pmb{b}^\exists$. Constraints with $A^\exists_{i,k}=0$ and $A^\exists_{i,j} \neq 0$ for some future universal variable $j \in \mathcal{I}_\forall$, $j\geq k$, are fulfilled as they have been checked in a previous loop. Therefore, $\mathcal{O}(m_k \cdot n)$ operations are required in each loop.
\end{proof}

Obviously, in the worst case in each iteration $m_\exists \cdot n$ operations must be performed. However, if each universal variable occurs in only a few rows and the matrix is sparse, the runtime of the heuristic is negligible.
 In  \cite{hartisch2019novel,DissMichael} we showed that utilizing SCP in our solver resulted in a massive boost in both the number of solved instances and the runtime (about 4 times faster) on robust runway scheduling instances, while it did not have any noticeable effect on instances where SCP was not applicable.
 

\subsection{Conflict Analysis and Backpropagation}
The principles of detecting variable implications,  conflict analysis and non-chronological backtracking are well known and described for solving QBF \cite{zhang2002conflict,lonsing2013efficient,beyersdorff2021quantified}  and MIP \cite{achterberg2007conflict,witzig2021computational}. Yasol has a strong emphasis on binary variables, and therefore, the constraints are split in two groups: clause equivalent constraints and general linear constraints. The first are linear inequalities that encode a logic clause. Together with the 2-literal watching scheme \cite{moskewicz2001chaff} such constraints are handled lazy what allows to manage hundreds of thousands of learnt constraints in reasonable time, to detect implied binary variables on the fly. With the help of implied sequences of variables, bounds of binary variables are propagated and conflicts are detected, leading to new logic clauses. 

There are two situations where conflicts can be detected: either the assignment of a variable and the subsequent bound propagation leads to a conflict in an existing constraint or the current relaxation is infeasible. In the former case either two constraints show that an existential variable can no longer be set in one or the other way, or a single constraint leads to the propagation of an unconstrained universal variable.
In the latter case, by duality theory, the dual of the relaxation is unbounded and a feasibility cut can be retrieved \cite{benders1962partitioning}. In both cases the found conflicts are general linear constraints which are  analyzed and transformed to a single logic clause, allowing fast backtracking as well as efficient storage via the 2-literal watching scheme. Simply put, the transformation into a clause happens by collecting the variables that are currently assigned in such a manner that they contribute to the conflict. For example assume a less-or-equal conflict constraint and a variable that is currently set to 1 with positive coefficient. Then this variable appears as negated literal in the resulting clause, as setting this variable to zero results in a reduced violation of the conflict constraint. The information of how much flipping this variable helps to eliminate the conflict, i.e. the coefficient, is lost in this transformation. Furthermore, if a variable that was implied in an earlier decision node contributes to the conflict, the constraint provoking the implication is analyzed in the same manner, i.e. the variables leading to the implication are added to the clause as described above, without adding the implied variable to the clause. The final clause relaxes the initially found conflict due to the coefficient neglect but can be stored and handled in a much more efficient way.

 

 

 In order to extract the target decision level for backpropagation, two variables in the emerged clause are of special interest. Let $x_{c_1}, ..., x_{c_{L}}$ be the variables present in the clause and $l_{c_1},...,l_{c_L}$ the respective decision-levels.  Moreover, let the variables be sorted in a way that $x_{c_1}$ belongs to the largest decision-level and $x_{c_2}$ to the second largest decision-level. If $x_{c_1}$ is a universal variable and not implied through $A^\forall \pmb{x} \leq \pmb{b}^\forall$, it is erased from the newly derived cut and the cut analysis continues. Having the information of the learned clause, variable $x_{c_1}$ should have already being implied at the moment when $x_{c_2}$ is set, assuming that both variables where not implied during the search process. Either the search process can return to level $l_{c_1}$ setting a label at level $l_{c_2}$ indicating that $x_{c_1}$ is implied until the search process crosses $l_2$ again. This procedure is used when a solution to the instance was already found. Or, the search process returns up to level $l_2$, implies $x_{c_1}$, and starts researching the current branch. This is used in the first phase of the search process, when no feasible solution is yet known. The design decision for one or the other was experimentally chosen. 

 
 
 \begin{example}
 Assume the original conflict is
 $$3x_1 + 5x_7 + 6x_9 -3x_{15} - 3x_{21} \geq 6\, , $$
 
 with $x_1=1$, $x_7=0$, $x_9=0$, $x_{15}=1$, and $x_{21}=0$, temporarily fixed by the search process. In order to increase the left-hand side to eventually be greater or equal to $6$, at least $x_7$, $x_9$ or $x_{15}$ must flip their value. Let $l_1=1$, $l_7=12$, $l_9 = 16$, $l_{15}=20$, and $l_{21}=10$.  Only $x_9$ is implied because there is another constraint $x_2 + x_3 + x_{9}\geq 1$, with $x_2 = 0$ in level $l_2=14$, and $x_3=0$ in level $l_3=15$, where both $x_2$ and $x_3$ are not implied but temporarily fixed by branching.  Then, as a result, the constraint $x_2+x_3+x_7-x_{15} \geq 0$, a clause, is learnt. Moreover, in the current search path $x_{15}=0$ can already be implied  in level $15$, after $x_3$ has been set to zero.
 \end{example}
\subsection{Relaxation\label{Sec::Relaxation}}

Our search algorithm utilizes a relaxation at every search node in order to asses the quality of a
branching variable, the satisfiability of the existential constraint system in the
current subtree and for the generation of bounds. In this section we revisit the relaxation techniques utilized in the search algorithm as presented in the proceedings \cite{hartisch2021adaptive}.

In case of a \QmIP, besides relaxing the integrality of variables, the quantification sequence can be altered by changing the order or quantification of the variables. An LP-relaxation of a \QmIP can be built by dropping the integrality and also dropping universal quantification, \IE each variable is considered to be an existential variable with continuous domain. 
One major drawback of this LP-relaxation is that the worst-case perspective is lost by freeing the existential constraint system from having to be satisfied for all legal assignment of the  universally quantified variables: transferring the responsibility of universal variables to the existential player and solving the single-player game has nothing to do with the worst-case outcome in most cases. In order to strengthen this relaxation we use that for any legal assignment of the universally quantified variables the constraint system must be fulfilled. 

Such relaxations can only be used for \QmIPPlus where for all universal constraints $i \in [m_\forall]$ it is $A^\forall_{i,j}=0$ for all existential variables $j\in [n]$, $Q_j=\exists$. We call such problems \textit{\QmIP with polyhedral uncertainty set}.
\begin{remark}
Be aware, that in case of a general \QmIPPlus assigning  universally quantified variables of later blocks in order to obtain a relaxation is precarious, as it is not clear what universal variables assignments will be legal in the future.
In particular, ignorantly fixing universal variables whose legality can be actively impeded via interdicting assignments by the existential player, results in a too pessimistic and thus inconclusive relaxation.
Therefore, the results obtained in this section, only apply for \QmIP with polyhedral uncertainty. 
In case of a general \QmIPPlus the relaxation used in the solver leaves universally quantified variables free (i.e. uses the standard LP-relaxation), unless the universally quantified variable does not appear in the universal constraint system. In that latter case, the universally quantified variable is unrestricted in which case it again it can be fixed as discussed in the following.
\end{remark}
Let $\USet \neq \emptyset$ be the domain of universally quantified variables, \IE the uncertainty set, given by their domain and the universal constraint system. Let $\pmb{x}_\exists \in \Domain_\exists$ and $\pmb{x}_\forall\in \Domain_\forall$ denote the vectors only containing the existentially and universally quantified variables within their respective bounded domains $\Domain_\exists$ and $\Domain_\forall$  of game $\pmb{x}\in \Domain$, respectively. We call $\pmb{x}_\forall \in \USet$ a \textit{scenario} and refer to a partially filled universal variable vector as a \textit{subscenario}.
Additionally, we use $\Domain_{relax}$ to describe the bound domain given by $\Domain$ without the integrality condition. 

First, we consider the relaxation arising when fixing universally quantified variables according to some element of $\USet$. This can be interpreted as knowing the opponent's moves beforehand and adapting one's own moves for this special play.

\begin{definition}[LP-Relaxation with Fixed Scenario]\label{Def::LP_RELAX_FIXED}~\\
Let $P$ be a \QmIP with polyhedral uncertainty set and let $\pmb{\hat{x}}_\forall \in \USet$ be a fixed scenario. The LP $$\max_{\pmb{x}\in \Domain_{relax}} \left\lbrace \pmb{c}\pmb{x} \mid   \pmb{x}_\forall = \pmb{\hat{x}}_\forall ,\, A^\exists \pmb{x}\leq \pmb{b}^\exists \right\rbrace$$ is called the \emph{LP-relaxation with fixed scenario} $\pmb{\hat{x}}_\forall$ of $P$.
\end{definition}

\begin{prop}\label{PROP::EAS_RELAX}
Let $P$ be a \QmIP with polyhedral uncertainty set and let $R$ be the corresponding LP-relaxation with fixed scenario $\pmb{\hat{x}}_\forall  \in \USet$.  Then the following holds:
\begin{itemize}
\item[a)] If $R$ is infeasible, then also $P$ is infeasible.
\item[b)] If $R$ is feasible with optimal value $z_R$, then either $P$ is infeasible or $P$ is feasible with optimal value $z_P \leq z_R$, \IE $z_R$ constitutes an upper bound.
\end{itemize}
\end{prop}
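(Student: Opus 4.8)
The plan is to connect the game-theoretic objects attached to $P$ --- winning strategies and their minimax value --- to ordinary feasible points of the linear program $R$, using the single line of play that arises when the universal player commits to the scenario $\pmb{\hat{x}}_\forall$ from the outset. The crucial structural fact I would exploit is that $P$ has a \emph{polyhedral} (exogenous) uncertainty set: since $A^\forall_{i,j}=0$ for every existential variable $j$, the legality of any universal move is independent of the existential player's choices. Consequently, because $\pmb{\hat{x}}_\forall \in \USet$, every prefix of the scenario is a legal universal assignment no matter how the existential player reacts, so $\pmb{\hat{x}}_\forall$ really does trace out a legal path of universal moves in the game tree.

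For part (a) I would argue by contraposition. Suppose $P$ is feasible, \IE there is a winning strategy $S$ for the existential player. Let the universal player play exactly $\pmb{\hat{x}}_\forall$; by the observation above this is a legal play, and $S$ prescribes a unique existential response, yielding a complete assignment $\pmb{x}^\ast \in \Domain$ with $\pmb{x}^\ast_\forall = \pmb{\hat{x}}_\forall$ and $A^\exists \pmb{x}^\ast \leq \pmb{b}^\exists$ (this is exactly what ``winning'' guarantees at the resulting leaf). Since $\Domain \subseteq \Domain_{relax}$, the point $\pmb{x}^\ast$ is feasible for $R$, so $R$ is feasible. Contrapositively, infeasibility of $R$ forces infeasibility of $P$.

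For part (b), assume $R$ is feasible with optimum $z_R$ and that $P$ is feasible (otherwise the first alternative holds and there is nothing to prove). Let $S^\ast$ be an optimal strategy with minimax value $z_P$. Its value is the \emph{worst case} over all legal universal plays, \IE a minimum; hence the payoff obtained against the particular play $\pmb{\hat{x}}_\forall$ can only be at least $z_P$. This payoff is realised by the complete assignment $\pmb{x}^\ast$ that $S^\ast$ produces against $\pmb{\hat{x}}_\forall$, so $z_P \leq \pmb{c}\pmb{x}^\ast$. As in part (a), $\pmb{x}^\ast$ is feasible for $R$, whence $\pmb{c}\pmb{x}^\ast \leq z_R$ by optimality of $z_R$ for the maximisation $R$. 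Chaining the two inequalities gives $z_P \leq \pmb{c}\pmb{x}^\ast \leq z_R$, which is the claim.

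The main obstacle --- and the step deserving the most care --- is the very first one: justifying that committing to $\pmb{\hat{x}}_\forall$ corresponds to a legal sequence of universal moves in the game tree, and that an optimal strategy must therefore account for it. This is precisely where the polyhedral (non-decision-dependent) hypothesis is indispensable; in a general \QmIPPlus the legality of a later universal move can be destroyed by existential choices, so a fixed scenario need not be playable and the relaxation bound would fail (cf. the remark preceding Definition \ref{Def::LP_RELAX_FIXED}). Everything else reduces to the elementary facts that integer/bounded feasibility implies LP feasibility ($\Domain \subseteq \Domain_{relax}$) and that a minimax value is dominated by the payoff against any single fixed opponent play.
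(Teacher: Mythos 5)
Your proposal is correct and follows essentially the same route as the paper's proof: part (a) is the contrapositive of the paper's direct argument that an infeasible $R$ means no existential response to the legal scenario $\pmb{\hat{x}}_\forall$ exists, and part (b) is the same chain $z_P \leq \pmb{c}\pmb{x}^\ast \leq z_R$ obtained by comparing the optimal strategy's play against $\pmb{\hat{x}}_\forall$ with the LP optimum. Your explicit emphasis on why the polyhedral (non-decision-dependent) hypothesis makes the scenario playable matches the remark the paper places just before the definition of the relaxation.
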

\begin{proof}~
\begin{itemize}
\item[a)]  Let $A^\exists_\exists$ and $A^\exists_\forall$ be the submatrices of $A^\exists$ consisting of the columns corresponding to the existentially and universally quantified variables, respectively. If $R$ is infeasible then $$ \nexists \pmb{x}_\exists \in \Domain_\exists:\ A^\exists_\exists \pmb{x}_\exists \leq \pmb{b}^\exists-A^\exists_\forall \pmb{\hat{x}}_\forall\, ,$$
and since $\pmb{\hat{x}}_\forall \in \USet$ there cannot exist a winning strategy for $P$. As a gaming argument we can interpret this the following way: If there is some move sequence of the opponent we cannot react to in a victorious way---even if we know the sequence beforehand---the game is lost for sure.
\item[b)] Let $z_R=\pmb{c} \pmb{\hat{x}}$ be the optimal value of $R$, and let $\pmb{\hat{x}}_\exists$ be the corresponding fixation of the existential variables. It is \begin{equation}\label{Eq::ArgMinE}
\pmb{\hat{x}}_\exists = \argmax_{\pmb{x}_\exists \in \Domain_\exists
}\left\lbrace \pmb{c}_\exists \pmb{x}_\exists \mid  A^\exists_\exists \pmb{x}_\exists \leq \pmb{b}^\exists - A^\exists_\forall \pmb{\hat{x}}_\forall\right\rbrace\, .
\end{equation} If $P$ is feasible, scenario  $\pmb{\hat{x}}_\forall$ must also be present in the winning strategy. Let $\pmb{\tilde{x}}$ be the corresponding play, \IE $\pmb{\tilde{x}}_\forall = \pmb{\hat{x}}_\forall$. With Equation \eqref{Eq::ArgMinE} obviously $z_R=\pmb{c} \pmb{\hat{x}}\geq \pmb{c}\pmb{\tilde{x}}$ and thus $z_R \geq z_P$. 
\end{itemize}
\end{proof}

As shown in \cite{hartisch2021adaptive} adding a single scenario to the LP-relaxation already massively speeds up the search process compared to the use of the standard LP-relaxation. However, partially incorporating the multistage nature into a relaxation should yield even better bounds. Therefore, we reintroduce the original order of the variables while only taking a subset of scenarios $S \subseteq \USet$ into account.

\begin{definition}[$S$-Relaxation]\label{Def::SRelax}~\\
Given a \QmIP $P$ with polyhedral uncertainty set. Let $S \subseteq \USet$ and let $\Domain_S=\{\pmb{x} \in \Domain_{relax} \mid \pmb{x_\forall} \in S\}$.
We call

\[
\resizebox{ \textwidth}{!} {$
\max\limits_{\pmb{x}^{(1)} \in \Domain_S^{(1)}}\left( \pmb{c}^{(1)}\pmb{x}^{(1)}+ \min\limits_{\pmb{x}^{(2)} \in \Domain_S^{(2)}} \left( \pmb{c}^{(2)}\pmb{x}^{(2)} + \max\limits_{\pmb{x}^{(3)} \in \Domain_S^{(3)}} \left( \pmb{c}^{(3)}\pmb{x}^{(3)} + \ldots \max\limits_{\pmb{x}^{(\beta)} \in \Domain_S^{(\beta)}} \pmb{c}^{(\beta)} \pmb{x}^{(\beta)}\right)\right)\right) $}
\]
\begin{equation}
\textnormal{s.t.}\ Q \circ \pmb{x} \in \Domain_S:\ A^\exists \pmb{x} \leq \pmb{b}^\exists
\end{equation}	 
the \emph{$S$-relaxation} of $P$.
\end{definition}

\begin{prop}
Let  $P=(A^\exists, A^\forall, \pmb{b}^\exists, \pmb{b}^\forall, \pmb{c}, \Domain, \pmb{Q})$ be feasible and let $R$ be the $S$-relaxation with $\emptyset \neq S \subseteq \USet$ and optimal value $\tilde{z}_R$. Then $\tilde{z}_R$ constitutes an upper bound on the optimal value $\tilde{z}_P$ of $P$.
\end{prop}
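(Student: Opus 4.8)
The plan is to argue by transferring an optimal strategy of $P$ into the $S$-relaxation, exploiting that $R$ differs from $P$ in exactly two ways that are both favourable to the maximizing player: the minimizing (universal) player is restricted from the full uncertainty set $\USet$ to the subset $S$, and the maximizing (existential) player's integrality requirement is dropped via $\Domain_{relax}$. The key quantitative input will be that taking a minimum over a smaller index set can only increase its value.

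First I would fix an optimal winning strategy $\sigma^\ast$ for $P$, which exists since $P$ is feasible. By definition of the minimax value and of a winning strategy, the worst-case objective of $\sigma^\ast$ over all legal complete universal plays equals $\tilde z_P$, and along every such play the reached leaf satisfies $A^\exists \pmb{x} \leq \pmb{b}^\exists$. Because $P$ is a \QmIP with polyhedral uncertainty set, the universal constraint system contains no existential variables, so legality of universal moves is independent of the existential choices; consequently every $\pmb{x}_\forall \in S \subseteq \USet$ is itself a legal complete universal play in $P$.

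Next I would reinterpret $\sigma^\ast$ as a strategy $\sigma'$ in $R$. The existential responses prescribed by $\sigma^\ast$ are integral and hence lie in the relaxed domain $\Domain_{relax}$, so $\sigma'$ is admissible in $R$ (the continuous freedom of $R$ is simply not used); and since the universal branching of $R$ ranges only over $S$, $\sigma'$ merely discards the branches outside $S$. For each $\pmb{x}_\forall \in S$ the play generated by $\sigma'$ coincides with the one generated by $\sigma^\ast$, so it still satisfies $A^\exists \pmb{x} \leq \pmb{b}^\exists$ and produces the same finite objective $\pmb{c}\pmb{x}$. Thus $\sigma'$ is a winning strategy for $R$, which incidentally establishes that $R$ is feasible.

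Finally I would compare values. The value of $\sigma'$ in $R$ equals the minimum of $\pmb{c}\pmb{x}$ over the universal plays in $S$, whereas $\tilde z_P$ equals the minimum of the same quantity (for $\sigma^\ast$) over the larger set $\USet$; since $S \subseteq \USet$, the former minimum is at least the latter, so the value of $\sigma'$ is at least $\tilde z_P$. As $\tilde z_R$ is the maximum value over all admissible strategies of $R$, we obtain $\tilde z_R \geq \tilde z_P$. The step that needs the most care is that $P$ and $R$ have structurally different game trees --- $R$ has pruned universal branches and a continuous final block --- so the comparison cannot be made by a naive node-to-node induction; phrasing everything in terms of realized plays circumvents this, after which the subset-minimum inequality delivers the bound.
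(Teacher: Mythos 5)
Your proposal is correct and follows essentially the same route as the paper: both map an optimal strategy of $P$ into the restricted game $R$ (where the universal player is confined to $S$ and the existential player's integrality is relaxed, both changes favouring the maximizer) and conclude $\tilde z_R \geq \tilde z_P$ from the subset-minimum inequality. You merely spell out the paper's brief ``gaming argument'' in more explicit detail.
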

\begin{proof}
Again we use a gaming argument: with $S \subseteq \USet$  the universal player is restricted to a subset of her moves in problem $R$, while the existential player is no longer restricted to use integer values. Furthermore, any strategy for $P$ can be mapped to a strategy for the restricted game $R$. Hence, the optimal strategy for $R$ is either part of a strategy for $P$ or it is an even better strategy, as the existential player does not have to cope with the entire variety of the universal player's moves. Therefore,  $\tilde{z}_R \geq \tilde{z}_P$.
\end{proof}

In general, $\USet$ has exponential size with respect to the number of universally quantified variables. Therefore, the main idea is to keep $S$ a rather small subset of $\USet$. This way the DEP of the $S$-relaxation---which is a standard LP---remains tractable.
 
 \begin{example}\label{Example::S_Relax}
Consider the following binary QIP (The min/max alternation in the objective is omitted for clarity):
$$\begin{array}{rrrrrl}
\max & 2x_1&-x_2 &+x_3& +x_4 \\
\textnormal{s.t.} &\exists\, x_1 & \forall\, x_2 &\exists\, x_3& \forall \, x_4 &\in \{0,1\}^4:\\
&x_1 & +x_2 &+x_3& +x_4 & \leq 3\\
& & -x_2 &-x_3& +x_4 & \leq 0
\end{array} $$
The optimal first-stage solution is $\tilde{x}_1=1$, the principal variation is $(1,1,0,0)$ and hence the optimal value is $1$. Let $S=\{(1,0),(1,1)\}$ be a set of scenarios. The two LP-relaxations with fixed scenario according to the two scenarios in $S$ are shown in Table \ref{Tab::ExampleRelaxS}.
\begin{table}[ht!]
\centering
\footnotesize
\caption{Solutions of the single LP-relaxations with fixed scenarios.}\label{Tab::ExampleRelaxS}
\begin{tabular}{p{2cm}p{4.5cm}p{4.5cm}}
\toprule
scenario &  $x_2=1$, $x_4=0$ &  $x_2=1$, $x_4=1$\\\midrule
relaxation&$\begin{array}{rrrrrl}
\max & 2x_1 &+x_3& -1\\
\textnormal{s.t.} &x_1 &+x_3 & \leq 2\\
&  &-x_3&   \leq 1
\end{array} $
&
$\begin{array}{rrrrrl}
\max & 2x_1 &+x_3& + 0\\
\textnormal{s.t.} &x_1 &+x_3 & \leq 1\\
&  &-x_3&   \leq 0
\end{array} $\\\addlinespace[.1cm]
solution & $x_1=1$, $x_3=1$ & $x_1=1$, $x_3=0$ \\\addlinespace[.1cm]
objective & +2&+2\\\bottomrule
\end{tabular}
\end{table}
Both yield the optimal first stage solution of setting $x_1$ to one. Now consider the DEP of the $S$-relaxation in which $x_{3(\tilde{x}_2)}$ represents the assignment of $x_3$ after $x_2$ is set to $\tilde{x}_2$:
$$\begin{matrix*}[l]
\max\  k \\
\left.\begin{matrix*}[r]\textnormal{s.t.}&2x_1 &+x_{3(1)}& -1&\multicolumn{1}{l}{\geq k}\\
&x_1 &+x_{3(1)}& & \multicolumn{1}{l}{\leq 2}\\
&  &-x_{3(1)}& & \multicolumn{1}{l}{\leq 1}\\ \end{matrix*}\quad \right\rbrace \text{scenario $(1,0)$}\\
\left.\begin{matrix*}[r]\color{white}\text{s.t.}&2x_1 &+x_{3(1)}& +0 &\multicolumn{1}{l}{\geq k}\\
&x_1 &+x_{3(1)}& & \multicolumn{1}{l}{\leq 1}\\
&  &-x_{3(1)}& & \multicolumn{1}{l}{\leq 0}\end{matrix*}\quad \right\rbrace \text{scenario $(1,1)$}\\
\left.\begin{matrix*}[r]\color{white}\text{s.t.}\color{black} &\color{white}-2\color{black} x_1&,\hspace{3.65pt} x_{3(1)}&\in [0,1]\end{matrix*}\right.
\end{matrix*}
$$
In the $S$-relaxation it is ensured that variables following equal sub-scenarios are set to the same value. As $x_2$ is set to $1$ in each considered scenario in $S$, $x_3$ must be set to the same value in both cases. The solution of the DEP is $x_1=1$, $x_{3(1)}=0$ and $k=1$. Thus, the $S$-relaxation yields the upper bound 1 for the original problem. This is not only a better bound than the one obtained by the two LP-relaxations with individually fixed scenarios but it also happens to be a tight bound.
\end{example}

Both for the LP-relaxation with fixed scenario as well as the $S$-relaxation the selection of scenarios is crucial. For the $S$-relaxation additionally the size of the scenario set $S$ affects its performance, in particular if too many scenarios are chosen, solving the relaxation might consume too much time. 
In order to collect information on universal variables during the search we use the VSIDS heuristic \cite{moskewicz2001chaff}, the killer heuristic \cite{Killer} and we count the occurrences of each scenario and subscenario.


For further implementation details and experimental results showing the effectiveness of these relaxations we refer to \cite{hartisch2021adaptive}.

\subsection{Preprocessing Techniques}
As in QBF and MIP solving, adding a preprocessing step can be extremely beneficial for the subsequent search process. For standard \QmIP we can make use of several QBF techniques \cite{heule2014unified}, which, however, have to be viewed very cautiously in case of \QmIPPlus. For example, the well known universal reduction rule can no longer be applied in all cases. This rule allows us to set a universal variable in the worst-case manner within a constraint, if all other existential variables that are part of this constraint belong to a smaller variable block, \IE have to be set before this universal variable during the game. In case of QBF the universal variable can be removed from the clause, while in case of a \QmIP we shift it to the right-hand side in its worst-case assignment with regard to this constraint. This is based on the notion that the existential player has to anticipated the worst-case assignment of universal variables. For \QmIPPlus, however, this is no longer valid, as universal variable assignments might become illegal during the search. Hence, still anticipating the worst-case assignment is too pessimistic, resulting in non-optimal solutions.

Similarly, many MIP techniques have to be adapted very cautiously. Let us for example consider the exploitation of dominance relations as commonly used in mixed integer programming \cite{Gamrath2015}, which also can be defined for QIP. 
\begin{definition}[Dominance Relation for QIP]~\\
Given a binary QIP and two existentially quantified variables $x_j$ and $x_k$. We say $x_j$ \emph{dominates} $x_k$ ($x_j \succ x_k$), if
\begin{itemize}
\item[a)]$c_j \geq c_k$, and
\item[b)]$A^\exists_{i,j} \leq A^\exists_{i,k}$ for each constraint $i \in \{1,\ldots,m_\exists\}$.
\end{itemize} 
\end{definition}
Here the notion is that $x_j$ should not be smaller than $x_k$. In particular, if an assignment with $x_j=0$ and $x_k=1$ results in a satisfied existential constraint system, the same assignment with $x_j=1$ and $x_k=0$ will also not violate the constraint system and result in a not smaller objective value. However, we can show that exploiting this property in a similar fashion as used for MIP leads to wrong results for QIP.
\begin{example}\label{Ex::Dominance}
Consider the following QIP
	\begin{equation*}
 			\max\ -x_1-x_2-x_3
 		\end{equation*}
 		\begin{equation*}
 			\textnormal{s.t.}\ \exists\, x_1 \in \{0,1\} \; \forall\, x_2 \in\{0,1\} \; \exists\, x_3 \in\{0,1\} :
 		\end{equation*}
 		\begin{equation*}
		 	\begin{pmatrix}
 				-3 & 2& -2 \\
 				1 & -2 & 1 \\
 			\end{pmatrix}
 			\begin{pmatrix}
 				x_1 \\ x_2 \\ x_3
 			\end{pmatrix} \le
 			\begin{pmatrix}
 				0 \\ 0 
 			\end{pmatrix}
 		\end{equation*}
$x_1$ dominates $x_3$, \IE $x_1\succ x_3$. Hence, one might expect that $x_1=0$ and $x_3=1$ cannot be part of an optimal winning strategy, \IE no branch of the optimal solution contains the edges representing $x_1=0$ and $x_3=1$. However, in the above example the principal variation itself, given by $\tilde{x}=(0,1,1)$, contradicts this assumption: $x_1$ must be set to $0$ as otherwise an immediate threat of a violation of the second constraint arises. Then the universal player chooses $x_2=1$ in order to minimize the objective by also forcing $x_3=1$ due to the first constraint. Hence, adding the constraint $x_3  \leq x_1$ to the constraint system, which is the usual procedure for binary variables in a MIP environment (or using this information implicitly in a conflict graph \cite{Gamrath2015}), would make this instance even infeasible.
One explanation for this can be found when looking at the respective DEP: The QIP dominance relation does not result in a dominance relation of the respective DEP.

\begin{equation*}
 			\min k
 		\end{equation*}
 		\begin{equation*}
		 	\textnormal{s.t.}\
		 	\begin{pmatrix}
		 	-1&1&1&0\\
		 	-1&1&0&1\\
 			0&-3 &  -2& 0 \\
 			0&1 &  1&0 \\
			0&-3 &  0& -2 \\
 			0&1 &0& 1 \\ 			
 			\end{pmatrix}
 			\begin{pmatrix}
 				k\\ 	x_1 \\ x_3^{(0)} \\x_3^{(1)}
 			\end{pmatrix} \le
 			\begin{pmatrix}
 				0 \\ -1 \\0\\0\\-2\\2
 			\end{pmatrix} \atop
x_1,\ x_3^{(0)},\ x_3^{(1)} \in \{0,1\}
 		\end{equation*}
 		For this DEP (an MIP) obviously  $x_1\nsucc x_3^{(0)}$ and $x_1\nsucc x_3^{(1)}$. 
\end{example}

This example demonstrates that different conclusions have to be drawn from dominance relations than we are used to for MIP. Further theoretical results of how to still exploit variable dominance in QIP can be found in \cite{DissMichael}.

\section{Application Example: Multilevel Critical Node Problem\label{Sec::MCN}}
\subsection{2-stage approach}
We consider the critical node problem as investigated in \cite{baggio2021multilevel}. Given a directed graph $G=(V,E)$ with a set of vertices $V$ and a set of edges $E$. Nodes can be affected by a viral attack, which triggers a cascade of infections that propagates via the graph neighborhood from node to node. Two agents act on graph $G$: The \textit{attacker} that can select a set of nodes she wants to infect and the \textit{defender} who tries to maximize the number of saved nodes. The defender has the ability to \textit{vaccinate} nodes before any infection occurs and to \textit{protect} a set of nodes after the attack took place. For each action (vaccination, infection, protection) a budget exists limiting the number of chosen nodes, where each action consumes one unit. This problem can be stated in a straight forward manner using \QmIPPlus: Either by using the  formulation used in \cite{baggio2021multilevel} with only a budget constraint in the universal constraint system, or by directly modelling that vaccinated nodes shall not be attacked via additional universal constraints.  Let $\Omega$, $\Phi$ and $\Lambda$ be the integer budgets for vaccination, infection and protection, respectively. For any node $v \in V$ binary variables $z_v$, $y_v$, and $x_v$ are used to indicated its vaccination, infection, and protection, respectively. Continuous variables $\alpha_v \in [0,1]$ are used to indicate whether node $v\in V$ is saved eventually. It is noteworthy that the optimal solution of $\pmb{\alpha}$ will be binary, but being able to use continuous rather that binary variables is computationally beneficial. Only the variables $\pmb{y}$ are universally quantified. Using the notion of \cite{baggio2021multilevel} their domain is given by the budget constraint
\begin{equation}
\Xi =\{\pmb{y}\in \{0,1\}^V \mid \sum_{v \in V} y_v \leq \Phi\} \, .
\end{equation}
Hence, the universal constraint system only contains the single budget constraint. The quantified program under polyhedral uncertainty can be stated as follows.

\begin{subequations}
\label{Model::MCN::QIP}
\begin{align}
\max_{\pmb{z} \in \{0,1\}^V} \min_{\pmb{y} \in \Xi} \max_{\substack{\pmb{x} \in \{0,1\}^V\\ \pmb{\alpha} \in [0,1]^V}}& \sum_{v \in V} \alpha_v\\
\textnormal{s.t. } \exists \pmb{z} \in \{0,1\}^V \ \forall \pmb{y} \in \Xi\  \exists \pmb{x} \in \{0,1\}^V\ \pmb{\alpha} \in [0,1]^V: \span \span\\
&\sum_{v \in V} z_v \leq \Omega\\
&\sum_{v \in V} x_v \leq \Lambda\\
&\alpha_v \leq 1+z_v-y_v &&\forall v \in V\label{Model::MCN::QIP::Vaccination}\\
&\alpha_v \leq \alpha_u +x_v +z_v &&\forall(u,v)\in E\label{Model::MCN::QIP::Cascade}
\end{align}
\end{subequations}
We call model \eqref{Model::MCN::QIP} \MCNBasic as it exhibits only a polyhedral uncertainty set.  Constraint \eqref{Model::MCN::QIP::Vaccination} ensures that infected nodes cannot be saved, unless they were vaccinated and Constraint \eqref{Model::MCN::QIP::Cascade} describes the propagation of the infection to neighboring nodes that are neither vaccinated nor protected. \MCNBasic corresponds exactly to the trilevel program presented in \cite{baggio2021multilevel} with the key difference that we are able to directly state this model and use our general solver to obtain the optimal solution, without having to dualize, reformulate or develop domain specific algorithms.

Furthermore, there is a different, more intuitive way to model this setting, by using a decision-dependent uncertainty set 
\begin{equation}
\Xi(\pmb{z})=\{\pmb{y}\in \{0,1\}^V \mid \sum_{v \in V} y_v \leq \Phi,\ y_v + z_v \leq 1\  \forall v \in V \} \, .
\end{equation}
The arising universal constraint system now depends on existential variable assignments of the first stage and directly prohibits the infection of vaccinated nodes. In order to obtain the model \MCNAugmented with decision-dependent uncertainty set we replace the uncertainty set as shown above and substitute 
\begin{equation}
    \alpha \leq 1-y_v \quad \forall v \in V
\end{equation}
for Constraint \eqref{Model::MCN::QIP::Vaccination}. Both \MCNBasic  and \MCNAugmented are \QmIPPlus and can be put into the QLP file format in the most straight-forward manner and fed to our solver. 
Furthermore, it is easy to see that both problems are simply restricted (see Section \ref{Sec::SimplyRestricted}), i.e. after the existential variables $\pmb{z}$ have been assigned, an illegal universal variable assignment can be detected by looking at the universal constraints separately: Either the budget is exceeded or---in the case of \MCNAugmented---a vaccinated node is infected.

\subsection{Multistage approach}
With the quantified programming framework we are also able to quickly change the setup to a true multistage setting. We consider the case that the attacked nodes are discloses sequentially in $\Phi$ stages, with one attack per universal decision stage. A vaccination stage precedes every attack stage without any restrictions on when the at most $\Omega$ vaccinations are performed. This way the decision maker has the ability to obtain an information advantage by watching attacks and decide on which vertices to vaccinate later. In this setting we assume $\Phi \geq 2$. We introduce enhanced variables $z^t_v$ and $y^t_v$ that indicate whether node $v$ was vaccinated and attacked in stage $t\in [\Phi]$, respectively. The universal constraint system is adapted to
\begin{align}
\sum_{v\in V} y_v^t = 1 &&\forall t \in [\Phi]\\
\sum_{t\in [\Phi]} y_v^t \leq 1 &&\forall v \in V\, ,
\end{align}
i.e. in each of the attack stages exactly one attack on a node is revealed and no node can be attacked more than once. When using a a decision-dependent uncertainty set we add the constraints
\begin{align}
y_v^t + \sum_{t'=1}^t z_v^{t'} \leq 1&&\forall t \in [\Phi], \forall v \in V
\end{align}
demanding that a node only can be attacked if it was not vaccinated in an earlier stages. The multistage model with polyhedral uncertainty set looks as follows

\begin{subequations}
\label{Model::MCN::QIP_Multi}
\begin{align}
\max_{\pmb{z}^1 \in \{0,1\}^V} \min_{\pmb{y}^1 \in \Xi} \ldots \max_{\pmb{z}^\Phi \in \{0,1\}^V} \min_{\pmb{y}^\Phi \in \Xi} \max_{\substack{\pmb{x} \in \{0,1\}^V\\ \pmb{\alpha} \in [0,1]^V}} \sum_{v \in V} \alpha_v\span\span\span\\
\textnormal{s.t. } & \exists \pmb{z}^1 \in \{0,1\}^V \ \forall \pmb{y}^1 \in \Xi\ \ldots \span\span\nonumber\\
&\pmb{z}^\Phi \in \{0,1\}^V \ \forall \pmb{y}^\Phi \in \Xi\ \ \exists \pmb{x} \in \{0,1\}^V\ \pmb{\alpha} \in [0,1]^V: \span \span \nonumber\\
&\sum_{t \in [\Phi]} \sum_{v \in V} z_v^t \leq \Omega\\
&z_v^t + \sum_{t'=1}^{t-1} y_v^{t'}  \leq 1 && \forall t \in [\Phi], \forall v \in V\\
&\sum_{v \in V} x_v \leq \Lambda\\
&\alpha_v \leq 1+\sum_{t=1}^\Phi z^t_v-y^t_v &&\forall v \in V \label{Model::MCN::QIPMulti::Vaccination}\\
&\alpha_v \leq \alpha_u +x_v +\sum_{t=1}^\Phi z^t_v &&\forall(u,v)\in E
\end{align}
\end{subequations}

In case of a decision-dependent uncertainty set Constraint \eqref{Model::MCN::QIPMulti::Vaccination} can be replaced by
\begin{equation}
\alpha_v \leq 1-\sum_{t=1}^\Phi y^t_v \qquad\forall v \in V
\end{equation}

Note that such instances are not simply restricted as formally defined in Section \ref{Sec::SimplyRestricted}: Assume $|V|=2$ and $\Phi=2$ and $y^1_1=0$ and $y^1_2=1$, i.e. the second node is attacked in the first stage. Then setting $y^2_1=0$ is illegal, as the constraints $y^2_1+y^2_2=1$ and $y^1_2+y^2_2\leq 1$ result in a conflict regarding $y^2_2$. But after the assignment of $y^2_1=0$ both constraints still have separate solutions, even though there is no more solution for the constraint \textit{system}. However, on the bright side, these instances still can be treated as simply restricted during Yasol's solution process: Setting $y_2^1=1$ in the first universal decision stage immediately results in the implication that $y^2_2$ has to be zero. Therefore, the implication process limits the potential variable assignments that need to be considered when checking the simply restricted property.

\section{Computational Experiments \label{Sec::Experiments}}
We compare the performance of our solver on \MCNBasic and \MCNAugmented with
the basic column- and row-generation approach (\MCNBaggio) used in \cite{baggio2021multilevel}, that approximates the optimal solution of the attack-problem via dualization techniques.\footnote{Thankfully, instances, results and algorithms used in \cite{baggio2021multilevel} where provided at \url{https://github.com/mxmmargarida/Critical-Node-Problem}} While this is a rather standard approach in robust optimization, there still are some technicalities the authors had to deal with. This makes using quantified programs---which simply have to be written down and handed to our solver---a much more straightforward approach in order to provide first insights. It is noteworthy, that we deliberately do not compare our solver to the enhanced techniques presented in \cite{baggio2021multilevel}, as our intention is to showcase the model-and-run potential of our solver serving as baseline for multistage instance.



The \MCNBaggio algorithm was run by Python 2.7.18, and the MILPs have been solved with IBM CPLEX 12.9, which is the same CPLEX version linked to Yasol. The experiments were conducted on a AMD Ryzen 9 5900X processor clocked at 3.70 GHz and equipped with 128 GB RAM with a time limit of two hours for each instance. Each process was restricted to a single thread and at most 16 processes ran at the same time. We use their provided instances with randomly generated undirected graphs with $|V|\in \{20,40,60,80,100\}$, density $5\%$ and various constellations of budget limits $\Omega$, $\Phi$, and $\Delta$.

\begin{table}[ht]
    \centering
        \caption{Number of solved instances within the time limit .\label{Tab::MCNNumSolved}}
    \begin{tabular}{lllllll}
    \toprule
 $\Omega$-$\Phi$-$\Lambda$   &1-1-1	&1-3-3	&2-2-2	&3-1-3	&3-3-1  &3-3-3\\\midrule
 \MCNBaggio  & 100	&77	    &86	    &98	    &70	    &44\\
\MCNBasic  & 100	&100	&80	    &56	    &60	    &41\\
\MCNAugmented & 100	&100	&100    &58	    &58	    &40\\\bottomrule
    \end{tabular}
\end{table}
\begin{table}[ht]
    \caption{Average Runtime per instances of the different approaches.
    \label{Tab::RunTimesMCN}}
    \centering
\begin{tabular}{clrrrrrr}
\toprule
\multicolumn{1}{c}{$\vert V \vert$} && 1-1-1 & 1-3-3  & 2-2-2  & 3-1-3  & 3-3-1  & 3-3-3  \\\midrule
\multirow{3}{*}{20} &\MCNBaggio  & 0.3   & 51.3   & 2.9    & 0.3    & 18.7   & 19.0  \\
                    &\MCNBasic  & 0.5   & 0.8    & 0.7    & 0.6    & 1.2    & 1.2    \\
                    &\MCNAugmented & 0.6   & 2.6    & 1.4    & 7.2    & 5.1    & 2.2    \\
                     \midrule
\multirow{3}{*}{40} &\MCNBaggio & 0.6   & 751.4  & 21.6   & 1.9    & 117.5  & 2945.6\\
                    &\MCNBasic & 1.1   & 8.0    & 2.2    & 1.4    & 14.9   & 12.2   \\
                    &\MCNAugmented & 1.2   & 71.3   & 16.3   & 4.3    & 149.8  & 189.6  \\
                     \midrule
\multirow{3}{*}{60} &\MCNBaggio & 1.5   & 1027.7 & 687.3  & 25.8   & 2758.6 & 6325.7\\
                    &\MCNBasic & 14.4  & 139.1  & 96.6   & 3617.3 & 1330.0 & 7111.1 \\
                    &\MCNAugmented & 1.5   & 840.4  & 98.2   & 3668.1 & 4745.4 & 7200.0 \\
                    \midrule
\multirow{3}{*}{80} &\MCNBaggio & 3.2   & 3374.1 & 3036.4 & 195.8  & 5536.3 & 7200.0 \\ &\MCNBasic & 63.1  & 283.2  & 2269.8 & 7200.0 & 7200.0 & 7200.0 \\
                    &\MCNAugmented & 1.7   & 429.0  & 428.7  & 7200.0 & 7200.0 & 7200.0 \\
                    \midrule
\multirow{3}{*}{100}&\MCNBaggio & 2.6   & 6428.2 & 4004.1 & 1121.3 & 6373.3 & 7200.0 \\
                    &\MCNBasic & 184.5 & 870.0  & 7200.0 & 7200.0 & 7200.0 & 7200.0 \\
                    &\MCNAugmented & 3.0   & 319.5  & 961.9  & 7200.0 & 7200.0 & 7200.0 \\
                    \bottomrule
\end{tabular}
\end{table}

\begin{figure}
    \centering
    \includegraphics{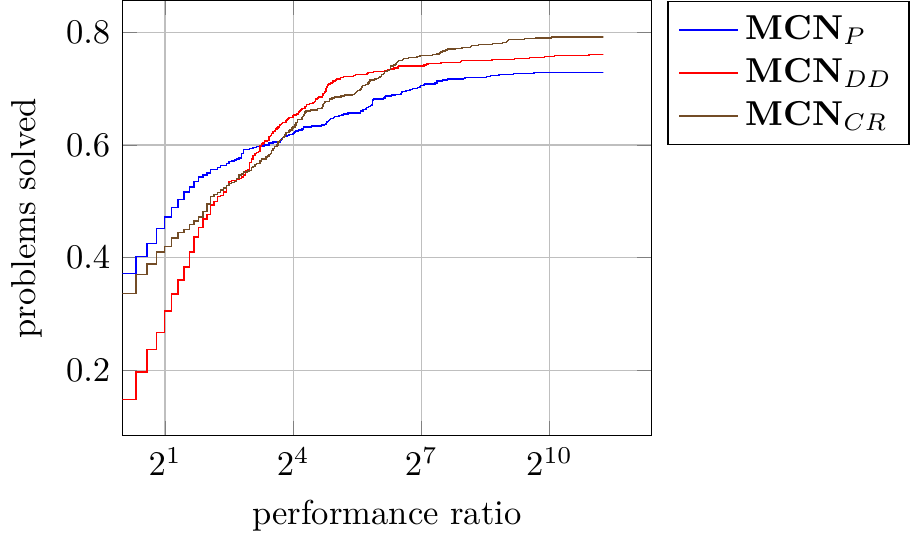}
    \caption{Performance Profile\label{fig:performanceProfile}}
\end{figure}
In Table \ref{Tab::MCNNumSolved} and Table \ref{Tab::RunTimesMCN} the number of solved instance and the average runtimes, respectively, are given. The runtime differences of the different approaches depending on the setting of the instances are quite significant. E.g. for $\vert V \vert=40$ and budget setting 3-3-3 the \MCNBasic finishes an instance in about 12 seconds on average, while \MCNBaggio takes more than three-quarters of an hour. On the other hand, for $\vert V \vert=60$ and  setting 3-1-3 this runtime observation is completely flipped around. On 72\% of the instances with 20 to 40 nodes, the \MCNBasic was solved first.
Interestingly, for the 3-1-3 and 3-3-1 setting \MCNBaggio seems to scale very well for increasing $\vert V \vert$, while both quantified programming approaches quickly fall behind. However, for the 1-3-3 setting both quantified programming approaches dominate \MCNBaggio, and in particular on the 2-2-2 instances the \MCNAugmented shows its potential. Generally speaking, the quantified programming approaches seem to be preferable on instances with small $\Omega$, i.e.~a small number of vaccination options, while for large $\Omega$ the \MCNBaggio approach performs better. In Figure \ref{fig:performanceProfile} we provide the performance profile \cite{dolan2002benchmarking}:
for each solution approach the percentage of the $600$ instances is shown that are solved within a certain time relative to the
fastest of the approaches. For example, on about 47\% of the instances \MCNBasic spent at most twice as much time as the fastest approach.

Another benefit of the quantified programming approaches is the existence of an incumbent solution when the runtime is exceeded rather than only a bound obtained by \MCNBaggio: 
The incumbent solutions of  \MCNAugmented (\MCNBasic) instances that hit the time limit  but where solved to optimality with \MCNBaggio, where in 70\% (65\%) of the cases the optimal solution. And again we want to stress that our approach requires no domain knowledge, dualization or other preprocessing techniques, but only the ability to state the problem as \QmIPPlus. 
 
 For the multistage variants we only can compare our own methods, \IE the two modeling formulations with and without the decision-dependent uncertainty set. For $\vert V \vert\in\{20,40,60\}$ and only using the budget settings with $\Phi\geq 2$ the same instances as used before are expanded to multistage instances as described in Section \ref{Sec::MCN}. Runtimes and the number of solved instances are shown in Table \ref{Tab::MultiMCN}.  For instances with $\vert V \vert\leq40$ the \MCNBasic formulation is solved in all cases. 
 \begin{table}[ht]
   \caption{Average runtime per instances (and number of solved instances) of the different multistage approaches
    \label{Tab::MultiMCN}}
    \centering
    \resizebox{\textwidth}{!}{
    \begin{tabular}{clrrrrrrrr}
\toprule
\multicolumn{1}{c}{$\vert V \vert$} &&\multicolumn{2}{c}{1-3-3}  & \multicolumn{2}{c}{2-2-2}   & \multicolumn{2}{c}{3-3-1}  & \multicolumn{2}{c}{3-3-3}  \\\midrule
\multirow{2}{*}{20} &\MCNBasic&3.5&(20)& 2.0&(20)& 16.0&(20)&    14.4&(20)\\
&\MCNAugmented&39.8&(20)&    13.1&(20)&    529.1&(20)&   60.3&(20)\\\midrule
\multirow{2}{*}{40} &\MCNBasic&193.6&(20)&   18.8&(20)&    2648.3&(20)&  2645.4&(20)\\
&\MCNAugmented&1489.1&(20)&  273.6&(20)&   7067.3&(1)& 7200.0&(0)\\\midrule
\multirow{2}{*}{60} &\MCNBasic&5916.0&(11)&  1196.5&(20)&  7200.0&(0)&  7200.0&(0)\\
&\MCNAugmented&7200.0&(0)&  1634.1&(20)&  7200.0&(0)&  7200.0&(0)\\\midrule
\end{tabular}
  }
\end{table}
In the case were all three budgets are set to $2$ all instances up to $\vert V \vert=60$ can be solved using either formulation. Similar to the observations obtained for the three stage case, the performance of the \MCNAugmented deteriorates with increasing $\Omega$. One reason is that an increased $\Omega$ results in a larger search space in the early decision stages, which is explored in a more clever fashion in the non-decision-dependent formulation, via  the presented relaxation techniques, which is not applicable in the \MCNAugmented case.

\section{Conclusion and Outlook}
Quantified programs, which are linear programs with ordered variables that are either existentially
or universally quantified, provide a convenient framework for multistage optimization under uncertainty. We presented \QmIP and the augmented version \QmIPPlus that allows discrete uncertainty sets restricted by some polytope which manifestation even can be decision-dependent in an even more compact and straightforward manner. We presented a general, game-tree search based, solution approach and this way provide an easily accessible model-and-run approach for multistage robust discrete optimization with mixed-integer recourse variables in the final decision stage. In particular, there is no longer the need for tedious reformulation and dualization procedures that turn out to be in vain as soon as parts of the model change. Using the example of the multilevel critical node problem, we showcased that our solver provides a solid baseline while only having to state the problem in the most straightforward manner.  
Another major advantage of our search-based approach is the possibility to retrieve incumbent solutions, rather than bounds from a row-and-column generation approach.
Furthermore, multistage extensions---that exceed the standard two-stage approach---are easily realizable in order to collect first insight into model structures while obtaining optimal solutions rather than approximations.



In particular for the novel \QmIPPlus several challenges remain to be tackled by future research that will push the performance of the general search-based approach even further. To this end relaxation and branching techniques need to be developed that take the decision-dependent nature of the \QmIPPlus into account. Furthermore, standard elimination techniques need to be reconsidered and the great potential of being able to learn universal constraints and propagate universal variables has to be exploited more effectively. Providing callbacks and allow the user to actively influence branching decisions can also be a successfully way to make use of our general framework while taking advantage of domain specific knowledge. Another important research direction is the availability of certificates that allow to better comprehend the solution or even its optimality. 

\section{Acknowledgments}
This research was partially funded by the Deutsche Forschungsgemeinschaft (DFG, German Research Foundation) - 399489083.
 \bibliographystyle{alpha}
 \bibliography{references.bib}
 
 \begin{appendices}
  
\section{QLP File Format \label{Appendix::FileFormat}}
For the QIP+ given by 
	\begin{align*}
	\pmb{c}^\top\pmb{x}\, : \quad&  \min\ x_1 +  2x_2 - 5 x_3 + x_4 \\
	\pmb{Q} \circ \Domain \, : \quad&	\exists\, x_1 \in \{0,1,2\}\ \exists\, x_2 \in \{0,1\} \ \forall\, x_3 \in \{0,1\} \ \exists\, x_4 \in [0,1]\\
	A^\exists \pmb{x} \leq \pmb{b}^\exists\, : \quad &\begin{pmatrix}
 				1 & -2 & 1 & -1 \\
 				-1 & 1 & 1 & -1
 			\end{pmatrix}
 			\begin{pmatrix}
 				x_1 \\ x_2 \\ x_3 \\ x_4
 			\end{pmatrix} \leq
 			\begin{pmatrix}
 				 1 \\ 1
 			\end{pmatrix}\, \\
 			A^\forall \pmb{x} \leq \pmb{b}^\forall\, : \quad &\begin{pmatrix}
 				1 & 0 & -1 & 0
 			\end{pmatrix}
 			\begin{pmatrix}
 				x_1 \\ x_2 \\ x_3 \\ x_4
 			\end{pmatrix} \leq
 			\begin{pmatrix}
 				 1
 			\end{pmatrix}	
	\end{align*}
the corresponding QLP file looks as follows:
	\texttt{
	\begin{align*}
	&\text{MINIMIZE}\\[-5pt]
	&\text{x1 +2x2 -5x3 +x4}\\[-5pt]
	&\text{SUBJECT TO}\\[-5pt]
	&\text{ x1 -2x2 +x3 -x4 <= 1}\\[-5pt]
	&\text{-x1 + x2 +x3 -x4 <= 1}\\[-5pt]
	&\text{UNCERTAINTY SUBJECT TO}\\[-5pt]
	&\text{x1 - x3 <= 1}\\[-5pt]
	&\text{BOUNDS}\\[-5pt]
	&\text{0 <= x1 <= 2}\\[-5pt]
	&\text{0 <= x2 <= 1}\\[-5pt]
	&\text{0 <= x3 <= 1}\\[-5pt]
	&\text{0 <= x4 <= 1}\\[-5pt]
	&\text{BINARIES}\\[-5pt]
	&\text{x2\ x3}\\[-5pt]
	&\text{GENERAL}\\[-5pt]
	&\text{x1}\\[-5pt]
	&\text{EXISTS}\\[-5pt]
	&\text{x1\ x2\ x4} \\[-5pt]
	&\text{ALL}\\[-5pt]
	&\text{x3} \\[-5pt]
	&\text{ORDER}\\[-5pt]
	&\text{x1\ x2\ x3\ x4}\\[-5pt]
	&\text{END}
	\end{align*}}

The corresponding solution file looks as follows:
\lstset{language=XML}
\begin{lstlisting}
<?xml version = "1.0" encoding="UTF-8" standalone="yes"?>
<YasolSolution version="1">
 <header
   ProblemName="Example.qlp"
   SolutionName="Example.qlp.sol"
   ObjectiveValue="-1.000000"
   Runtime="0.030seconds"
   DecisionNodes="8"
   PropagationSteps="11"
   LearntConstraints="5"/>
 <quality
   SolutionStatus="OPTIMAL"
   Gap="0.000000"/>
 <variables>
  <variable name="x1" index="0-1" value="2" block="1"/>
  <variable name="x2" index="2" value="1" block="1"/>
  <variable name="x3" index="3" value="1" block="2"/>
  <variable name="x4" index="4" value="0.000000" block="3"/>
 </variables>
</YasolSolution>
\end{lstlisting}
 \end{appendices}
\end{document}